\setlist{nolistsep}
\newtheorem{thm}{Theorem}[section]
\newtheorem*{thm*}{Main Theorem}
\newtheorem*{thm**}{Theorem}
\newtheorem{cor}[thm]{Corollary}
\newtheorem{lem}[thm]{Lemma}
\newtheorem{prop}[thm]{Proposition}
\theoremstyle{definition}
\newtheorem{defn}[thm]{Definition}
\theoremstyle{definition}
\theoremstyle{definition}
\newtheorem{remark}[thm]{Remark}
\theoremstyle{definition}
\newtheorem{remarks}[thm]{Remarks}
\theoremstyle{definition}
\newtheorem{examples}[thm]{Examples}
\theoremstyle{definition}
\theoremstyle{definition}
\numberwithin{thm}{subsection}
\newcommand{\R}{\ensuremath{\mathbb{R}}}
\newcommand{\N}{\ensuremath{\mathbb{N}}} 
\newcommand{\Z}{{\Bbb Z}} 
\def\p{\partial}
\def\i{\infty}
\def\supp{\it{supp}}
\def\diam{\emph{diam}} 
\def\dim{\mathrm{dim}}
\def\fr{\emph{fr}}
\def\image{\emph{im}}
\def\cal{\mathcal}
\def\a{\alpha}
\def\b{\beta} 
\def\g{\gamma} 
\def\G{\Gamma}
\def\d{\delta}
\def\D{\Delta} 
\def\e{\epsilon}
\def\k{\kappa} 
\def\l{\lambda}
\def\o{\omega}
\def\H{\mathcal{H}}
\def\F{\mathcal{F}}
\def\cN{\cal{N}}
\def\lip{\mathrm{Lip}}
\def\sp{\mathcal{S}}
\def\Span{\mathcal{S}}
\def\Gr{\mathrm{Gr}}
\def\XXint#1#2#3{{\setbox0=\hbox{$#1{#2#3}{\int}$}
\vcenter{\hbox{$#2#3$}}\kern-.5\wd0}}
\renewcommand{\labelenumi}{(\alph{enumi})}
\renewcommand{\theenumi}{(\alph{enumi})}
\renewcommand{\labelenumi}{\theenumi}
\renewcommand{\p@enumii}{\theenumi}
\begin{document} 
	\author[J. Harrison \& H. Pugh]{J. Harrison \\Department of Mathematics \\University of California, Berkeley  \\ H. Pugh\\ Mathematics Department \\ Stony Brook University} 
	\title[Cohomological Spanning conditions]{Solutions to the Reifenberg Plateau problem with cohomological spanning conditions}
% \email harrison@math.berkeley.edu
% 	\phone 1-510-326-1644
% 	\fax 1-510-642-8244
% 	\MSC 49 Calculus of variations and optimal control,  28 Measure and Integration and 35 Partial Differential Equations
		
\begin{abstract}
	We prove existence and regularity of minimizers for H\"older densities over general surfaces of arbitrary dimension and codimension in \( \R^n \), satisfying a cohomological boundary condition, providing a natural dual to Reifenberg's Plateau problem. We generalize and extend methods of Reifenberg, Besicovitch, and Adams; in particular we generalize a particular type of minimizing sequence used by Reifenberg (whose limits have nice properties, including lower bounds on lower density and finite Hausdorff measure,) prove such minimizing sequences exist, and develop cohomological spanning conditions. Our cohomology lemmas are dual versions of the homology lemmas in the celebrated appendix by Adams found in Reifenberg's 1960 paper. 
\end{abstract}
			
\maketitle
\section*{Introduction}
	\label{sec:introduction}  	  
	The seminal 1960 work of Reifenberg \cite{reifenberg} was the first to solve a variety of Plateau problems which minimize Hausdorff measure of a collection of surfaces which in some specified sense ``span'' a constrained boundary set \( A \). One can consider \( \frak{m}_A := \inf\{\H^m(X): X \mbox{ spans } A\} \) and hope to find a set \( X_0 \) which spans \( A \) and \( \H^m(X_0) = \frak{m}_A \). One would also like to say something about the structure of the set \( X_0 \), assuming it exists.
	
	Reifenberg used \v{C}ech homology to define spanning sets, as did Almgren \cite{almgrenannals} in 1968. Both solved existence and regularity problems. Although their theories give very satisfying solutions for single component boundaries, homology theory turned out to be inadequate for boundaries consisting of multiple connected components (see Proposition 5.0.1 and Figure 1 in \cite{hpplateau}, as well as \ref{threerings} below.) In \cite{hpplateau}, the authors used linking numbers and differential chains to solve this problem for codimension 2 boundaries in \( \R^n \). De Lellis, Ghiraldin, and Maggi \cite{delellisandmaggi} used our idea of linking numbers to provide a different proof without reference to differential chains. In 2014 \cite{cohomology}, the authors proposed using \v{C}ech cohomology to extend \cite{hpplateau} to arbitrary dimension and codimension and elliptic integrands, although no details were given of the latter. In 2015, De Philippis, De Rosa, and Ghiraldin \cite{ghiraldin} built upon \cite{hpplateau} and \cite{delellisandmaggi} and used a relaxed deformation requirement to solve the problem in arbitrary codimension.  
	
	In this paper we define spanning sets of varying cohomological types and minimize with respect to bounded H\"older\footnote{The reviewer kindly pointed out that our original Lipschitz assumption can be improved to H\"{o}lder continuous.} densities. The variable density problem is a perturbative version of the density one case, and is not the main point of this paper. However, the elliptic problem for anisotropic integrands in \cite{elliptic} builds upon parts of the current work, and so we find it necessary to state our results in generality beyond the density one case.
	 
	We begin with our definition of spanning sets satisfying cohomological conditions: If \( A\subset \R^n \) is compact, \( 2\leq m\leq n \) and \( L \) is a subset of the \( (m-1) \)-st \v{C}ech cohomology of \( A \) with \( \cal{R} \)-module coefficients, we say that a compact set \( X\supset A \) is a \emph{\textbf{surface with coboundary \( \supset L \)}} if the elements of \( L \) do not extend over \( X \). If \( A\subset C \) and \( C \) is convex and compact, let \( \Span(A,C,L,m) \) denote the set of 
surfaces \( X \) with coboundary \( \supset L \) which are contained in \( C \), such that \( \H^m(X\setminus A)<\i \). 
	Let \( \sp^m \) denote \( m \)-dimensional Hausdorff spherical measure and let \( f:C \to [a,b] \) be an \( \a \)-H\"{o}lder continuous function for \( 0<\a\leq 1 \) with \( a > 0 \) and \( b < \i \). Let \( \F^m(E) = \int_E f(x) d\sp^m  \) for all \( \sp^m \)-measurable \( E\subset C \).  We prove:

\begin{thm*}
	If \( \Span(A,C,L,m) \) is non-empty (e.g. if \( \H^{m-1}(A)<\i \) or if \( A \) is bilipschitz homeomorphic to a finite simplicial complex) then the infimum of \( \F^m(X\setminus A) \) among elements \( X \) of \( \Span(A,C,L,m) \) is achieved, and is non-zero. Every such minimizer \( X_0 \) is \( m \)-rectifiable\footnote{In the sense of \cite{mattila}} away from \( A \), has true tangent planes \( \H^m \) a.e. away from \( A \), and contains a surface \( X_0' \) with coboundary \( \supset L \) such that \( X_0' \) contains no proper subset in \( \Span(A,C,L,m) \). Furthermore, there exists an open set \( V \) such that \( \H^m((X_0\cap \mathring{C}) \setminus (A\cup V)) = 0 \) and \( X_0 \cap V \) is a locally H\"older continuously differentiable submanifold of \( \R^n \) with exponent \( \a \).
\end{thm*}

Remarks:

\begin{itemize}
	\item If \( A \) is an orientable manifold and \( L \) consists of the fundamental cocycles of the components of \( A \) (See the definition of \( L^\mathcal{R} \) in \S\ref{section:coboundaries}.\ref{sub:cohomological}) then \( \Span(A,C,L,m) \) subsumes the collections of surfaces \( \mathcal{G} \) and \( \mathcal{G^*} \) of \cite{reifenberg}. In particular, every compact manifold with boundary \( A \) is a surface with coboundary \( \supset L \). If \( A  \) is a topological \( (m-1) \)-sphere, then the surfaces with coboundary \( \supset L \) are precisely the compact sets containing \( A \) which do not retract onto \( A \).
	\item As a corollary, this proves that \( m \)-dimensional compact submanifolds of \( \R^n \) with a non-empty prescribed boundary have a non-zero lower bound on their \( m \)-dimensional Hausdorff measure, depending on the boundary\footnote{The reviewer pointed out that in the smooth case, this corollary can also be obtained by stating the variational problem in the setting of mod 2 currents.}.
	\item The Adams surface \( X \) in Example 8 in the appendix of \cite{reifenberg} admits a retraction onto its frontier \( A \). Lemma \ref{lem:6A} and Lemma 6A of \cite{reifenberg} show that \( X \) does not span \( A \) with respect to any homological or cohomological spanning condition. The linking number test of \cite{hpplateau} also fails: A closed loop is easily drawn that passes once through the middle of the M\"obius strip and the triple M\"obius strip, has linking number one with \( A \), and is disjoint from \( X \).
\end{itemize}

	To prove our main theorem, we modernize and adapt several methods of Reifenberg, Besicovitch and Adams. We use \v{C}ech cohomology instead of homology to define our class of surfaces, and also prove several new results which we use to work with H\"older densities in this paper and elliptic integrands in a sequel \cite{elliptic}. In Lemma \ref{lem:4}, we present a new ``slicing inequality,'' a sharp version of the Eilenberg inequality \cite{eilenberg}.

	Lemma \ref{lem:ratios} is a new ``asymptotic monotonicity'' result which is sufficient to apply Preiss' theorem and deduce our minimizing set is rectifiable. Theorem \ref{thm:haircut} extends a codimension one version of the theorem found in \cite{hpplateau}. We further refine and develop a concept which is used in a deep and fundamental way in \cite{reifenberg}. We originally made use of so-called ``Reifenberg regular sequences'' in \cite{hpplateau} and further extended the concept in \S\eqref{sub:lower_bounds_on_density_ratios}. Reifenberg regular sequences are more general than uniformly quasiminimal sequences\footnote{Since they are Ahlfors regular (see Proposition 4.1 in \cite{davidsemmes})}, which were introduced as \( (\g,\d) \)-restricted sets in \cite{almgren}, and promoted in \cite{davidsemmes1}, \cite{davidAQM} \cite{davidqm}. Limits of Reifenberg regular sequences are not necessarily rectifiable, as are limits of quasiminimal sequences, however limits of Reifenberg regular sequences have non-zero lower bounds on lower density and have finite \( m \)-dimensional Hausdorff measure.

	Our main theorem is proved as follows:
	\renewcommand{\labelenumi}{\arabic{enumi}.}
	\renewcommand{\theenumi}{\arabic{enumi}.}
	\renewcommand{\labelenumi}{\theenumi}
	
	\begin{enumerate}
		\item We apply weak compactness to find a minimizing sequence of measures \( \F^m\lfloor_{X_k\setminus A} \) converging weakly to a measure \( \mu_0 \).
		\item The surfaces \( X_k \) may not converge to \( X_0 \) in the Hausdorff metric, so we provide a haircutting method which modifies these surfaces so that they do converge to \( X_0 \).
		\item We find a Reifenberg regular subsequence as in \cite{hpplateau} to find a non-zero lower bound on lower density of \( X_0 \). 
		\item We analyze measures in slices to prove an asymptotic version of monotonicity. 
		\item From this we deduce the densities exist and are bounded above zero and below infinity. 
		\item Preiss' theorem is used to show that \( X_0\setminus A \) is rectifiable.
		\item It then follows that \( X_0\setminus A \) has true tangent planes almost everywhere, and this leads to a proof of lower semicontinuity of the weighted spherical measure.
	\end{enumerate}

	The paper is organized as follows: Section \ref{section:coboundaries} contains the results necessary to work with surfaces with coboundary. Section \ref{sec:hausdorff_spherical_measure} contains some basic results about Hausdorff spherical measure, including Lemma \ref{lem:4}. In Section \ref{section:isoperimetry}, we generalize some isoperimetry results found in \cite{reifenberg} and use them to prove Theorem \ref{thm:haircut}. The proof of the main theorem starts in Section \ref{sec:minimizing_sequences}, and this section also contains some general results about Reifenberg regular sequences. In Section \ref{sec:monotonicity}, we prove our asymptotic monotonicity result and complete the proof of lower semicontinuity.

\section*{Notation}
	\label{sec:notation}
	If \( X\subset \R^n \),
	\begin{itemize}
		\item \( \fr\,X \) is the frontier of \( X \);
		\item \( \bar{X} \) is the closure of \( X \);
		\item \( \mathring{X} \) is the interior of \( X \);
		\item \( X^c \) is the complement of \( X \);
		\item \( \cN(X,\e) \) is the open epsilon neighborhood of \( X \);
		\item \( B(X,\e) \) is the closed epsilon neighborhood of \( X \);
		\item \( \dim(X) \) is the topological dimension of \( X \);
		\item \( \H^m(X) \) is the \( m \)-dimensional (normalized) Hausdorff measure of \( X \);
		\item \( \sp^m(X) \) is the \( m \)-dimensional (normalized) Hausdorff spherical measure of \( X \);
		\item \( \F^m(X) \) is the integral \( \int_X f\,d\sp^m \), where \( f \) is a given  H\"older function;
		\item \( X^*:= \{p \in X \,|\, \H^m(X \cap B(p,r)) > 0 \text{ for all } r > 0 \} \), where \( m \) is the Hausdorff dimension of \( X \);
		\item \( X(p,r) = X \cap B(p,r) \);
		\item \( x(p,r) = X \cap \fr\,B(p,r) \);
		\item \( \a_m \) is the Lebesgue measure of the unit \( m \)-ball in \( \R^m \);
		\item \( \Gr(m,n) \) is the Grassmannian of un-oriented \( m \)-planes through the origin in \( \R^n \);
	\end{itemize}
	
	\newpage

\section{Coboundaries}\label{section:coboundaries}
\subsection{Cohomological spanning condition}
	\label{sub:cohomological}

	Let \( 1\leq m\leq n \) and \( A\subset \R^n \). If \( \mathcal{R} \) is a commutative ring and \( G \) is a \( \mathcal{R} \)-module, let \( H^{m-1}(A)=H^{m-1}(A;G) \) (resp. \( \tilde{H}^{m-1}(A)=\tilde{H}^{m-1}(A;G) \)) denote the \( (m-1) \)-st (resp. \( (m-1) \)-st reduced\footnote{Let us agree for notational purposes that \( \tilde{H}^0(\emptyset)=0 \) and that the inclusion \( \iota(Y,\emptyset) \) of \( \emptyset \) into any set \( Y \) induces the zero homomorphism \( \iota(Y,\emptyset)^*: \tilde{H}^0(Y)\to \tilde{H}^0(\emptyset) \).}) \v{C}ech cohomology group with coefficients in \( G \). If \( X\supset A \), and \( \iota = \iota(X,A) \) denotes the inclusion mapping of \( A \) into \( X \), let \( K^*(X,A) \) denote the complement in \( \tilde{H}^{m-1}(A) \) of the image of \( \iota^*:\tilde{H}^{m-1}(X)\to \tilde{H}^{m-1}(A) \). Call \( K^*(X,A) \) the \emph{\textbf{(algebraic) coboundary\footnote{In the spirit of Reifenberg and Adams' terminology ``algebraic boundary.''} of \( X \) with respect to \( A \).}}

	Let \( L \subset \tilde{H}^{m-1}(A)\setminus \{0\} \). We say that \( X \) is a \emph{\textbf{surface with coboundary\footnote{After writing this paper, the authors found that this definition was known to Fomenko in \cite{fomenkocohomology}, but that he had not defined the collection \( L^\mathcal{R} \) nor proved most of the results in \S\ref{sub:cohomological}.\ref{sub:reifenberg_appendix}.} \( \supset L \)}} if \( K^*(X,A) \supset L \); in other words, if \( L \) is disjoint from the image of \( \iota^* \).

	For example, if \( L=\emptyset \), then every \( X \supset A \) is a surface with coboundary \( \supset L \). If \( L\neq \emptyset \) and \( X \) is a surface with coboundary \( \supset L \), then \( X \) does not retract onto \( A \). If \( L= \tilde{H}^{m-1}(A)\setminus \{0\} \), then \( X \) is a surface with coboundary \( \supset L \) if and only if \( \iota^* \) is trivial on \( \tilde{H}^{m-1}(X) \).

	If \( A \) is homeomorphic to an \( (m-1) \)-sphere, \( \mathcal{R}=G=\Z \), \( L\simeq \{1,-1\} \) is the set of generators of \( \tilde{H}^{m-1}(A)\simeq \Z \) and \( X\supset A \) is compact with \( \H^m(X)<\i \), then \( X \) is a surface with coboundary \( \supset L \) if and only if \( X \) does not retract onto \( A \). This is due to a theorem of Hopf \cite{hurewicz}.

	More generally, if \( G=\mathcal{R} \) and \( A \) is an \( (m-1) \)-dimensional closed \( \mathcal{R} \)-orientable (topological) manifold, then there is a canonical choice for \( L \), denoted \( L^{\mathcal{R}}=L^\mathcal{R}(A) \): Let \( A_i, i=1,\dots,k \) denote the components of \( A \), and for each \( i \), let \( L_i \) denote the image under the natural linear embedding \( H^{m-1}(A_i)\hookrightarrow H^{m-1}(A)\simeq \oplus_i H^{m-1}(A_i) \) of the \( \mathcal{R} \)-module generators of \( H^{m-1}(A_i)\simeq \mathcal{R} \). If \( m>1 \), let \( L^\mathcal{R}=\cup_i L_i \). If \( m=1 \), define \( L^\mathcal{R} \) to be the projection of \( \cup_i L_i \) onto the reduced cohomology \( \tilde{H}^0(A) \).

	A primary reason for considering the set \( L^\mathcal{R} \) is the following: If \( X \) is a compact \( \mathcal{R} \)-orientable manifold with boundary \( A \), then \( X \) is a surface with coboundary \( \supset L^\mathcal{R} \) (Theorem \ref{thm:manifoldspans}.) If \( \mathcal{R}=\Z \), then \( X \) need not be orientable. In fact, if \( X \) is any compact set which can be written as the union of \( A \) and an increasing union of a sequence of compact manifolds with boundary \( X_i \), such that \( \p X_i\cup A=\p B_i \) for a sequence \( \{B_i\} \) of compact manifolds which tend to \( A \) in Hausdorff metric, then \( X \) is a surface with coboundary \( \supset L^\Z \) (Theorem \ref{thm:flatspans}.) When \( n=3, m=2 \), this is the class of surfaces \( \mathcal{G} \) found in \cite{reifenberg}.

	Another feature of \( L^\mathcal{R} \) is the following gluing property: Suppose \( A=A_1\cup\cdots\cup A_k \), where \( A_1,\dots, A_k \) are \( (m-1) \)-dimensional closed \( \mathcal{R} \)-orientable manifolds, and every non-empty intersection of the \( A_i \)'s is also a \( (m-1) \)-dimensional closed manifold. If for each \( i=1,\dots,k \), \( X_i \) is a surface with coboundary \( \supset L^\mathcal{R}(A_i) \), then \( X=\cup_i X_i \) is a surface with coboundary \( \supset L^\mathcal{R}(A) \) (Proposition \ref{prop:unionspans}.)

	If \( A \) is a \( (n-2) \)-dimensional closed oriented manifold, then by Alexander duality \( X \) is a surface with coboundary \( \supset L^\Z \) if and only if \( X \) intersects every embedding \( \gamma: \amalg_{j=1}^l S^1 \to \R^n\setminus A \), \( l\in \N \), such that the linking number \( L(\gamma, A_i) \) with some component \( A_i \) of \( A \) is \( \pm 1 \), and such that \( L(\gamma,A_j)=0 \) for \( j\neq i \).

	More generally, if \( A \) is any compact subset of \( \R^n \), we can view \( A \) as a compact subset of the \( n \)-sphere. Then by Alexander duality, the choice of \( L \) is equivalent to the choice of a subset \( S \) of \( \tilde{H}_{n-m-1}(S^n\setminus A) \). A compact set \( X \) is a surface with coboundary \( \supset L \) if and only if \( X \), viewed as a subset of \( S^n \), intersects the support of every singular chain representing an element \( S \). 

	It follows that if \( X \) is a surface with coboundary \( \supset L^\mathcal{R} \), then \( A\subset \overline{X\setminus A} \).

	If \( C\supset A\), let \( \Span(A,C,L,m) \) denote the collection of compact surfaces \( X \subset C \) such that \( X \) is a surface with coboundary \( \supset L \) and \( \H^m(X\setminus A) < \i \). 
\renewcommand{\labelenumi}{(\alph{enumi})}
\renewcommand{\theenumi}{(\alph{enumi})}
\renewcommand{\labelenumi}{\theenumi}

	\begin{examples} \mbox{}
		\begin{enumerate}
			\item\label{threerings}
			If \( A\subset \R^3 \) is the union of three stacked circles, explicitly \( A=\{(x,y,z)\in\R^3: x^2+y^2=1, z\in\{-1,0,1\} \} \), then the surfaces \( X_1=\{(x,y,z)\in \R^3 : x^2+y^2=1, -1\leq z \leq 0 \}\cup \{(x,y,z)\in \R^3 : x^2+y^2\leq 1, z=1 \}\), \( X_2=\{(x,y,z)\in \R^3 : x^2+y^2=1, 0\leq z \leq 1 \}\cup \{(x,y,z)\in \R^3 : x^2+y^2\leq 1, z=-1 \}\) and \( X_3=\{(x,y,z)\in \R^3 : x^2+y^2=1, -1\leq z \leq 1 \} \) are all surfaces with coboundary \( \supset L^\Z \). One can replace the cylinders with catenoids, and move the circles of \( A \) up or down, in which case any of \( X_1 \), \( X_2 \) or \( X_3 \) could be an area minimizer in \( \Span(A,\R^3,L^\Z,2) \), depending on the distance between the circles of \( A \).

			\item
			If \( A\subset \R^3 \) is a standard \( 2 \)-torus given parametrically by \( x(\theta,\phi)=(R+r\cos\theta)\cos\phi \), \( y(\theta,\phi)=(R+r\cos\theta) \) and \( z(\theta,\phi)=r\sin\theta \), and \( L\subset H^1(A;\Z) \) consists of a single element, the class of the cocycle dual to a longitudinal circle \( \phi=\mathrm{const.} \), then \( X\supset A \) is a surface with coboundary \( \supset L \) if and only if \( X \) contains a longitudinal disk. If one replaces the minor radius \( r \) with a function \( r(\phi) \), then the set \( A\cup D \), where \( D \) is the longitudinal disk at the narrowest part of \( A \), will be an area minimizer in \( \Span(A,\R^3,L,2). \)
		\end{enumerate}
	\end{examples}

	This definition is the natural dual of the definition of a ``surface with boundary \( \supset L \)'' \cite{reifenberg} (see also \cite{almgrenannals}.) Recall \( X \) is a \emph{\textbf{(Reifenberg) surface with (algebraic) boundary \( \supset L \)}} if \( L \) is a subgroup of the kernel of \( \iota_*:H_{m-1}(A)\to H_{m-1}(X) \), this kernel being the \emph{\textbf{algebraic boundary}} of \( X \). Given a choice of \( G \) and \( L \), we call the collection of surfaces with boundary \( \supset L \), a \emph{\textbf{Reifenberg collection.}} Reversing the variance has a number of advantages:
	\begin{itemize}
		\item The group \( G \) may be any \( \mathcal{R} \)-module, not just a compact abelian group as in \cite{reifenberg} or a finitely generated group as in \cite{almgrenannals}.  
		\item The collection of non-retracting surfaces in Theorem 2 of \cite{reifenberg} is achieved as a single collection, namely \( \Span(A\simeq S^{m-1},\R^n,L^\Z,m) \);
		\item The sets \( X_1, X_2, X_3 \) in Example \ref{threerings} above are all surfaces with coboundary \( \supset L^\Z \), but the only Reifenberg collections containing all three correspond to the trivial subgroup \( L=\{0\} \), in which case every set \( X\supset A \) is a surface with boundary \( \supset L \). (See \cite{hpplateau} Proposition 5.0.1.)  
		\item There is a canonical choice of \( L \) in the case that \( A \) is a compact oriented \( (m-1) \)-dimensional manifold, namely the subset \( L^\Z \), and the collection \( \Span(A,\R^n,L^\Z,m) \) is well-behaved and large as described above. In particular, \( \Span(A,\R^3,L^\Z,2) \) contains Reifenberg's class \( \mathcal{G} \), and when \( A \) is homeomorphic to a \( (m-1) \)-sphere, \( \Span(A,\R^n,L^\Z,m) \) contains Reifenberg's class of non-retracting surfaces, \( \mathcal{G}^* \).
	\end{itemize}
	
	\begin{figure}[h]
	  \centering
	    \includegraphics[width=.5\textwidth]{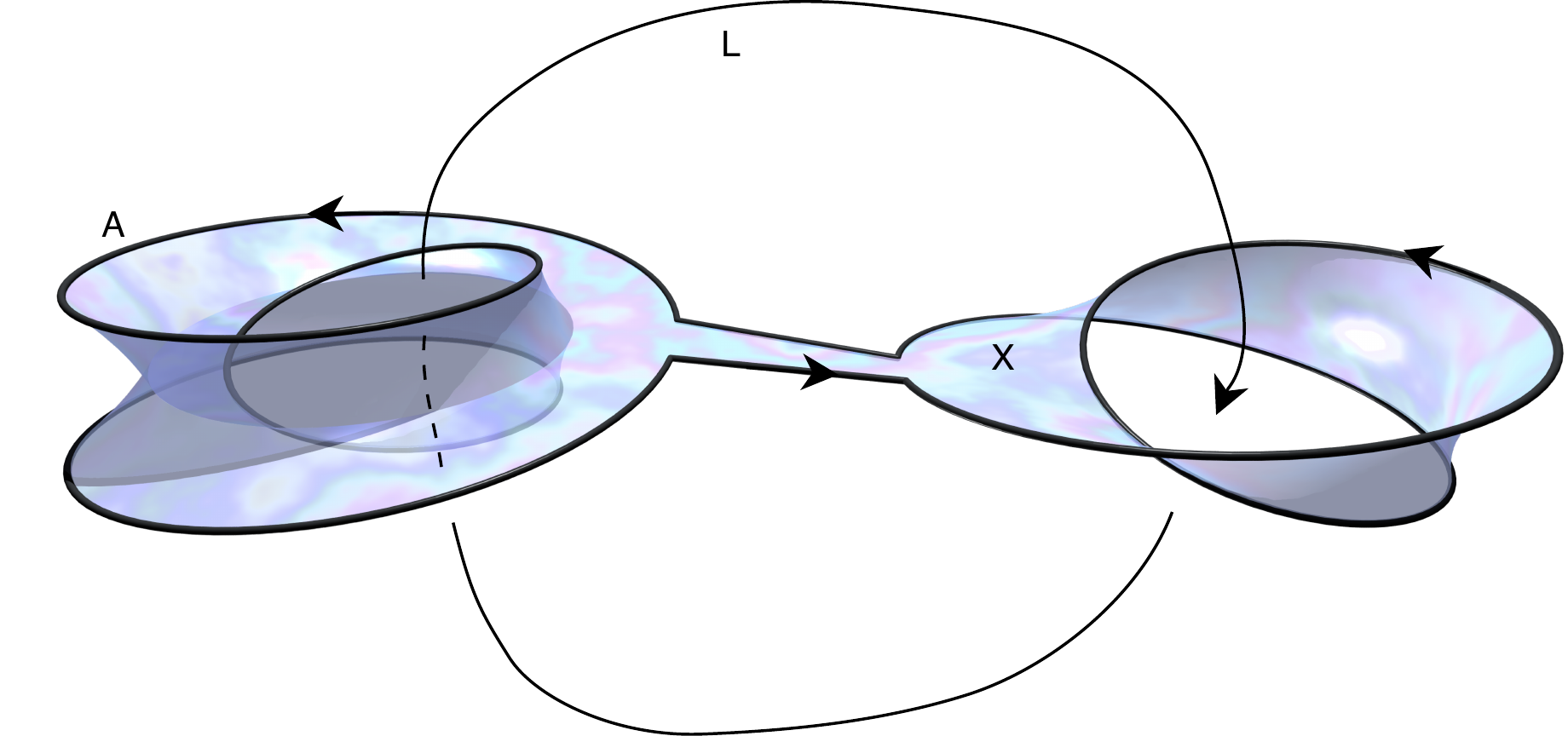}
	  \caption{The Adams surface \( X \) \cite{reifenberg} admits a retraction onto the curve \( A \). It therefore does not span \( A \) with respect to any condition invariant under continuous mappings keeping \( A \) fixed. This includes all homological and cohomological spanning conditions, as well as the linking number tests in \cite{plateau10} \cite{hpplateau}. The figure shows a simple closed curve \( L \) that links the curve \( A \) once, but which is disjoint from \( X \).}
	  \label{fig:AdamsLink}
	\end{figure}

	\subsubsection{Some open questions}
		\label{ssub:some_open_questions}

		\begin{enumerate}
			\item For what choice of \( L\subset H_m(X) \) is it true that if \( X \) is a surface with boundary \( \supset L \), then \( A \subset \overline{X\setminus A} \)? Same question for coboundary.
			\item  If \( m=n-1 \) and \( A \) is a \( (m-1) \)-dimensional orientable compact manifold, the condition that \( X \) is a surface with coboundary \( \supset L^\Z \) is slightly relaxed from the definition of ``span'' using linking numbers in (\cite{plateau10} \cite{hpplateau},) since in the linking number definition, \( L^\Z \) need only be disjoint from the image of those cocycles which are Alexander dual to cycles represented by embedded circles, and not sums of such. In this vein, one can modify the definition of surface with coboundary \( \supset L \) so that \( L \) need only be disjoint from those elements of \( H^k(A) \) which extend over \( X \) as cocycles Alexander dual to cycles representable by manifolds of a given topological type. However, this definition seems difficult to work with. For example, compare Lemma \ref{lem:6A} with Theorem 5.0.6 of \cite{hpplateau}. To what extent is it possible to restrict the topological type of these cycles? See \cite{ghiraldin}.
			\item If \( X \) is a surface with algebraic boundary \( K \), how can one determine the algebraic coboundary \( K^* \) of \( X \)? Same question with \( K \) and \( K^* \) reversed.
			\item If one replaces sets with pairs, the definition can be repeated with relative cohomology: A pair \( (X,Y)\supset (A,B) \) is a surface with coboundary \( \supset L \) if \( L \) is disjoint from the image of \( \iota^*:\tilde{H}^{m-1}(X,Y)\to \tilde{H}^{m-1}(A,B) \). Is this definition useful for working with surfaces which partially span their boundaries?
	 	\end{enumerate}

	\subsection{Cohomological spanning lemmas}
		\label{sub:reifenberg_appendix}

		We now produce a sequence of lemmas, many of whose statements are dual to those found in the appendix of \cite{reifenberg}. We do not assume sets are compact, unless the assumption is made explicit in the lemma. 

		\begin{lem}\label{lem:1A}
			\( K^*(A,A) = \emptyset \).
		\end{lem}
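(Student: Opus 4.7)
The plan is to simply unwind the definition. Recall that $K^*(X,A)$ is defined as the complement in $\tilde{H}^{m-1}(A)$ of the image of $\iota(X,A)^*:\tilde{H}^{m-1}(X)\to \tilde{H}^{m-1}(A)$, where $\iota(X,A)$ is the inclusion of $A$ into $X$. Setting $X=A$, the inclusion $\iota(A,A)$ is the identity map of $A$, which by functoriality of \v{C}ech cohomology induces the identity homomorphism on $\tilde{H}^{m-1}(A)$.

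Since the identity is surjective, its image is all of $\tilde{H}^{m-1}(A)$, and hence the complement of this image in $\tilde{H}^{m-1}(A)$ is empty. Thus $K^*(A,A)=\emptyset$.

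There is no real obstacle here; the lemma is essentially a bookkeeping statement serving as a base case for the subsequent lemmas in \S\ref{sub:reifenberg_appendix}. The only subtlety worth noting is the convention for $m=1$: when $A=\emptyset$, the footnote's convention that $\tilde{H}^0(\emptyset)=0$ and that $\iota(Y,\emptyset)^*$ is zero still gives image equal to $\tilde{H}^0(\emptyset)=0$, whose complement in itself is empty, so the conclusion holds uniformly.
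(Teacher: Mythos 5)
Your proof is correct and matches the paper's own argument: both simply observe that $\iota(A,A)$ is the identity, so $\iota(A,A)^*$ is the identity on $\tilde{H}^{m-1}(A)$, which is surjective, making the complement of its image empty. The additional remark about the $m=1$ / $A=\emptyset$ convention is a harmless extra check.
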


		\begin{proof}
			The identity map on \( \tilde{H}^{m-1}(A) \) is surjective.
		\end{proof}

		\begin{lem}\label{lem:2A}
			If \( X \) is contractible and \( A \subset X \), then \( K^*(X,A) = \tilde{H}^{m-1}(A)\setminus \{0\} \).
		\end{lem}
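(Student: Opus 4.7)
The plan is to unwind the definition of $K^*(X,A)$ and show the image of $\iota^*$ is as small as possible, namely $\{0\}$. By definition, $K^*(X,A) = \tilde{H}^{m-1}(A) \setminus \mathrm{image}(\iota^*)$, where $\iota^*: \tilde{H}^{m-1}(X) \to \tilde{H}^{m-1}(A)$ is induced by inclusion. So it suffices to show that $\iota^*$ is the zero homomorphism, since then the complement in $\tilde{H}^{m-1}(A)$ of $\mathrm{image}(\iota^*) = \{0\}$ is exactly $\tilde{H}^{m-1}(A)\setminus \{0\}$, as claimed.

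To see that $\iota^*$ is zero, I would first observe that a contractible space is homotopy equivalent to a point. Since \v{C}ech cohomology is a homotopy functor, $\tilde{H}^{m-1}(X) \cong \tilde{H}^{m-1}(\mathrm{pt}) = 0$ for every $m \geq 1$. (For $m=1$ this is the reduced zeroth \v{C}ech cohomology of a path-connected space, which is $0$.) Consequently the domain of $\iota^*$ is the trivial $\mathcal{R}$-module and $\iota^*$ must be the zero map.

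The only minor subtlety is the degenerate case $A = \emptyset$: the paper's convention stipulates that $\tilde{H}^0(\emptyset) = 0$ and that the inclusion $\emptyset \hookrightarrow X$ induces the zero map on $\tilde{H}^0$. In this case the assertion is vacuous, since $\tilde{H}^{m-1}(A)\setminus\{0\} = \emptyset = K^*(X,A)$, so the identity holds trivially.

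The main (and only) point where care is needed is the appeal to homotopy invariance of \v{C}ech cohomology for the possibly non-nice space $X$; but since in this paper $X$ is a compact subset of $\mathbb{R}^n$ (or at most a general topological space, with \v{C}ech cohomology defined via open covers), homotopy equivalence still induces isomorphisms of \v{C}ech cohomology groups, and the argument goes through.
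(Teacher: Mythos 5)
Your argument is the same as the paper's: both invoke homotopy invariance of \v{C}ech cohomology to conclude that a contractible $X$ has vanishing reduced cohomology, hence $\iota^*$ is zero and its image is $\{0\}$. You have simply spelled out the details (including the $A=\emptyset$ edge case) that the paper's one-line proof leaves implicit.
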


		\begin{proof}
			By homotopy invariance, \( X \) has the reduced cohomology of a point.
		\end{proof}

		\begin{lem}\label{lem:3A}
			Suppose \( X\supset A \) and \( X = \cup_{i=1}^N X_i \) where the \( X_i \) are disjoint, closed, and contractible. If \( m>1 \), then \( K^*(X,A) = H^{m-1}(A)\setminus \{0\} \). 
		\end{lem}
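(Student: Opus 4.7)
The plan is to show that the image of the induced map $\iota^{*}:\tilde{H}^{m-1}(X)\to \tilde{H}^{m-1}(A)$ is the zero subgroup, because then $K^{*}(X,A) = \tilde{H}^{m-1}(A)\setminus\{0\}$ by definition, and the hypothesis $m>1$ identifies $\tilde{H}^{m-1}(A)$ with $H^{m-1}(A)$ (reduced and unreduced cohomology agree in positive degree). So the entire content is that $\tilde{H}^{m-1}(X)=0$.

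First I would observe that since the $X_i$ are finitely many disjoint closed subsets of $X$, each $X_j$ is also open in $X$ (its complement is the finite union $\cup_{i\neq j}X_i$ of closed sets). Therefore $X$ is the topological disjoint union of the $X_i$, so \v{C}ech cohomology splits additively: $H^{k}(X)\cong\bigoplus_{i=1}^{N}H^{k}(X_i)$ for every $k$. Next, by the homotopy invariance of \v{C}ech cohomology on paracompact Hausdorff spaces, each contractible $X_i$ has the reduced \v{C}ech cohomology of a point, namely zero in every degree. Combining these two facts in degree $m-1\geq 1$, one obtains $H^{m-1}(X)=\bigoplus_i H^{m-1}(X_i)=0$, and because $m-1\geq 1$ one also has $\tilde{H}^{m-1}(X)=H^{m-1}(X)=0$.

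Consequently $\iota^{*}$ has trivial domain, so its image is $\{0\}$. By the definition of the algebraic coboundary, $K^{*}(X,A)=\tilde{H}^{m-1}(A)\setminus\{0\}=H^{m-1}(A)\setminus\{0\}$, which is the required conclusion.

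The main obstacle, such as it is, is being careful with the foundations of \v{C}ech cohomology rather than with the argument itself. The splitting over a finite disjoint union and homotopy invariance are both standard on the category of paracompact Hausdorff spaces in which the paper's subsets of $\R^n$ live, but one must use them rather than their singular counterparts, since elsewhere in the paper the coefficient module $G$ is allowed to be arbitrary and the sets $X_i$ are not assumed to be ANRs or to have CW homotopy type. Assuming $m=1$ is genuinely excluded is also essential: in that degree one loses the collapse of reduced to unreduced cohomology, and the $N$ path components of $X$ would contribute nontrivially to $\tilde{H}^{0}(X)$, breaking the vanishing step.
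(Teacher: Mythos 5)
Your proof is correct and rests on the same two pillars as the paper's (additivity of \v{C}ech cohomology over a finite disjoint union, plus homotopy invariance to kill the cohomology of each contractible $X_i$), but you organize the argument a bit more economically. The paper decomposes \emph{both} $X$ and $A$ into pieces $X_i$ and $A_i = A \cap X_i$, checks that $\iota(X,A)^*$ respects the direct-sum decomposition via a commutative square, and then applies Lemma~\ref{lem:2A} component-by-component. You instead notice that once $H^{m-1}(X_i)=0$ for each $i$ (degree $m-1\geq 1$, contractible), additivity forces $\tilde{H}^{m-1}(X)=H^{m-1}(X)=0$ outright, so $\iota^*$ has trivial domain and nothing further needs checking. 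This makes the decomposition of $A$ and the commutativity verification superfluous. Your attention to the foundational points — that disjoint, closed, finitely many pieces are automatically open so that $X$ is a genuine topological coproduct, and that one must use \v{C}ech additivity and homotopy invariance on paracompact Hausdorff spaces rather than the singular versions — is appropriate and matches the paper's reliance on Eilenberg--Steenrod references. The explicit remark about why $m>1$ is needed (so that reduced and unreduced cohomology coincide, and so that the $\tilde{H}^0$ of the $N$ components does not obstruct the vanishing) is also exactly the right caveat.
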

		
		\begin{proof}
			Let \( A_i=A\cap X_i \). By E-S Ch. I Thm. 13.2c, \( H^{m-1}(X) \simeq \oplus_{i=1}^N H^{m-1}(X_i) \) and \( H^{m-1}(A)\simeq \oplus_{i=1}^N (A_i) \). Moreover, the square
			\begin{align}
				\xymatrix{H^{m-1}(X) \ar@{->}[r]^-{\simeq} \ar@{->}[d]^{\iota(X,A)^*}  & \oplus H^{m-1}(X_i) \ar@{->}[d]^{\oplus \iota(X_i,A_i)^*} \\
				\tilde{H}^{m-1}(A)  \ar@{->}[r]^-{\simeq} &  \oplus H^{m-1}(A_i) }	
			\end{align}
			commutes since it does so for each summand of \( \oplus H^{m-1}(A_i) \). We may then apply Lemma \ref{lem:2A}.
		\end{proof}
		
		\begin{lem}\label{lem:6A}
			Suppose \( g:(X,A) \to (Y,B) \) is continuous. Let \( L_A \subset \tilde{H}^{m-1}(A)\setminus \{0\} \) and \( L_B = (g|_A^*)^{-1}(L_A) \). If \( X \) is a surface with coboundary \( \supset L_A \), then \( Y \) is a surface with coboundary \( \supset L_B \).
		\end{lem}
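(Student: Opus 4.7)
The plan is to use naturality of induced maps in \v{C}ech cohomology to get a commutative square, and then chase the diagram. Specifically, the continuous map of pairs $g:(X,A)\to (Y,B)$ yields a commutative diagram
\[
\xymatrix{\tilde{H}^{m-1}(Y) \ar[r]^{g^*} \ar[d]_{\iota(Y,B)^*} & \tilde{H}^{m-1}(X) \ar[d]^{\iota(X,A)^*} \\ \tilde{H}^{m-1}(B) \ar[r]^{g|_A^*} & \tilde{H}^{m-1}(A), }
\]
since inclusion is natural with respect to continuous maps of pairs, and \v{C}ech cohomology is a functor.

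Next I would argue by contradiction. Suppose $Y$ is not a surface with coboundary $\supset L_B$; then there exists $\beta \in L_B$ which lies in the image of $\iota(Y,B)^*$, i.e.\ $\beta = \iota(Y,B)^*(\alpha)$ for some $\alpha \in \tilde{H}^{m-1}(Y)$. By definition of $L_B = (g|_A^*)^{-1}(L_A)$, we have $g|_A^*(\beta) \in L_A$, and in particular $g|_A^*(\beta) \neq 0$ since $L_A \subset \tilde{H}^{m-1}(A)\setminus\{0\}$. Commutativity of the square then yields
\[
g|_A^*(\beta) \;=\; g|_A^*\,\iota(Y,B)^*(\alpha) \;=\; \iota(X,A)^*\,g^*(\alpha),
\]
so $g|_A^*(\beta)$ lies in the image of $\iota(X,A)^*$. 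This exhibits an element of $L_A$ inside the image of $\iota(X,A)^*$, contradicting the hypothesis that $X$ is a surface with coboundary $\supset L_A$.

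There is no real obstacle here; the only subtlety worth flagging is ensuring that $g|_A^*(\beta)$ is genuinely nonzero so that the contradiction lands inside $L_A$ rather than at the distinguished zero element, which is immediate from $L_A \subset \tilde{H}^{m-1}(A)\setminus\{0\}$. The naturality used is standard for singular cohomology and extends to \v{C}ech cohomology without issue in this generality, so the proof amounts to writing down the square and performing a one-step diagram chase.
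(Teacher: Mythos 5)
Your proof is correct and takes exactly the same route as the paper: the paper simply exhibits the commutative square and declares the result evident, while you carry out the one-step diagram chase explicitly (and correctly note that $0 \notin L_B$ since $g|_A^*$ is a homomorphism and $0 \notin L_A$).
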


		\begin{proof}
			The proof is evident from the commutativity of the following square:
			\begin{align}
				\xymatrix{\tilde{H}^{m-1}(X) \ar@{<-}[r]^{g^*} \ar@{->}[d]^{\iota(X,A)^*}  & \tilde{H}^{m-1}(Y) \ar@{->}[d]^{\iota(Y,B)^*} \\
				\tilde{H}^{m-1}(A)  \ar@{<-}[r]^{(g|_A)^*} & \tilde{H}^{m-1}(B). }	
			\end{align}
		\end{proof}

		\begin{lem}\label{lem:7A}
			Suppose \( X \) is a surface with coboundary \( \supset L \). If \( X \subset Y \), then \( Y \) is also a surface with coboundary \( \supset L \).
		\end{lem}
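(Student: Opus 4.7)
The plan is to derive this directly from Lemma \ref{lem:6A}, which has essentially been set up for exactly this kind of situation. Let $g: (X,A)\to (Y,A)$ be the inclusion map, which is well-defined since $A\subset X\subset Y$. Since the restriction $g|_A$ is the identity on $A$, the induced map $(g|_A)^*$ on $\tilde{H}^{m-1}(A)$ is the identity, so $((g|_A)^*)^{-1}(L) = L$. Applying Lemma \ref{lem:6A} with $L_A = L_B = L$ gives that $Y$ is a surface with coboundary $\supset L$.

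Alternatively (and this is really what is going on under the hood of Lemma \ref{lem:6A} in this special case), one can argue directly by functoriality: the inclusion $\iota(Y,A): A \hookrightarrow Y$ factors as $\iota(Y,X) \circ \iota(X,A)$, so the induced cohomology map factors as $\iota(Y,A)^* = \iota(X,A)^* \circ \iota(Y,X)^*$. It follows that $\mathrm{im}(\iota(Y,A)^*) \subset \mathrm{im}(\iota(X,A)^*)$, and since $L$ is disjoint from the latter by hypothesis, it is disjoint from the former, which is precisely the statement $K^*(Y,A)\supset L$.

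There is no real obstacle here; the only thing to check is that Lemma \ref{lem:6A} (or equivalently functoriality of \v{C}ech cohomology) applies to the inclusion of pairs $(X,A)\hookrightarrow (Y,A)$, which is immediate. I would present the one-line argument via Lemma \ref{lem:6A} for consistency with the presentation in the preceding lemmas.
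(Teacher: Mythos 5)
Your proposal is correct, and your second ("alternatively") paragraph is essentially verbatim the paper's proof: the inclusion $A\hookrightarrow Y$ factors through $X$, so $\mathrm{im}\,\iota(Y,A)^*\subset\mathrm{im}\,\iota(X,A)^*$. Your first route via Lemma \ref{lem:6A} with $g$ the inclusion of pairs is also valid and is really the same functoriality fact packaged through that lemma, so there is no meaningful difference in approach.
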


		\begin{proof}
			The inclusion \( A\hookrightarrow Y \) factors through \( X \), so the image of \( \iota(Y,A)^* \) is contained in the image of \( \iota(X,A)^* \).
		\end{proof}

		\begin{lem}\label{lem:8A}
			Let \( m=n \) and suppose \( A \) is the unit sphere in \( \R^n \). If \( X\supset A \) contains the closed unit ball, then \( K^*(X,A) = \tilde{H}^{n-1}(A)\setminus \{0\} \). If \( X\supset A \) does not contain the closed unit ball, then \( K^*(X, A) = \emptyset \).
		\end{lem}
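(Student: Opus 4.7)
The plan is to split into the two cases of the lemma and, in each, compute the image of $\iota^*: \tilde{H}^{n-1}(X) \to \tilde{H}^{n-1}(A)$ directly. Throughout, I will write $B$ for the closed unit ball in $\R^n$, so that $A = \mathrm{fr}\,B$.

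For the first case, suppose $B \subset X$. The idea is to factor the inclusion of pairs as $A \hookrightarrow B \hookrightarrow X$. By functoriality of \v{C}ech cohomology, $\iota(X,A)^* = \iota(B,A)^* \circ \iota(X,B)^*$. Since $B$ is contractible, $\tilde{H}^{n-1}(B) = 0$ (this is exactly the content of Lemma \ref{lem:2A}, applied with $X$ replaced by $B$), so the composition is zero. Hence the image of $\iota(X,A)^*$ is $\{0\}$ and its set-theoretic complement in $\tilde{H}^{n-1}(A)$ is $\tilde{H}^{n-1}(A) \setminus \{0\}$, as required.

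For the second case, suppose $B \not\subset X$, and choose $p \in B \setminus X$. The key observation is that because $A \subset X$, the point $p$ cannot lie on $A$, so $p$ must lie in the \emph{open} unit ball. This is the only subtle point, and it is what makes the radial retraction trick legitimate. Define $r : \R^n \setminus \{p\} \to A$ by sending $x$ to the unique intersection of the ray $\{p + t(x-p) : t > 0\}$ with $A$. Since $p$ is interior to $B$, this ray crosses the sphere exactly once for positive $t$, and when $x \in A$ the crossing occurs at $t=1$, i.e., at $x$ itself; thus $r|_A = \mathrm{id}_A$. Restricting to $X \subset \R^n \setminus \{p\}$ yields a continuous retraction $r|_X : X \to A$ of $X$ onto $A$.

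To finish, apply \v{C}ech cohomology to the composition $A \hookrightarrow X \xrightarrow{r|_X} A$, which equals the identity on $A$. Functoriality gives $\iota(X,A)^* \circ (r|_X)^* = \mathrm{id}_{\tilde{H}^{n-1}(A)}$, so $\iota(X,A)^*$ is surjective and $K^*(X,A) = \emptyset$. I do not foresee a real obstacle here; both halves are short once the factorization (Case 1) and the radial retraction (Case 2) are in place, and the only thing demanding minor care is verifying that $p$ lies in the open ball so that the retraction in Case 2 is well defined on all of $X$.
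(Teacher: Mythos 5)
Your proof is correct and follows essentially the same route as the paper: for the first case, your factorization of $A \hookrightarrow B \hookrightarrow X$ combined with $\tilde{H}^{n-1}(B)=0$ is exactly the content of the paper's appeal to Lemmas \ref{lem:2A} and \ref{lem:7A}, and for the second case, your explicit radial retraction from a point $p$ in the open ball is the standard construction behind the paper's one-line observation that $A$ is a retract of $X$.
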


		\begin{proof}
			If \( X \) contains the closed unit ball \( B \), then Lemmas \ref{lem:2A} and \ref{lem:7A} prove the first statement. If \( X \) does not contain \( B \), then \( A \) is a retract of \( X \), and so \( \iota(X,A)^* \) must be surjective. 
		\end{proof}

		\begin{lem}\label{lem:10A} 
			Suppose \( f:I \times Y \to X \) is a continuous map. Set \( A_0 = f(\{0\} \times Y), \quad A_1 = f(\{1\} \times Y), \) and \( A = A_0 \cup A_1 \). Write \( f_0:= f\lfloor_{\{0\} \times Y} \) and \( f_1 :=  f\lfloor_{\{1\} \times Y} \). Suppose \( f_0 \) is a homeomorphism from \( \{0\}\times Y \) to \( A_0 \), and that we are given a subset \( L_0 \subset \tilde{H}^{m-1}(A_0)\setminus \{0\} \). Then there exists a subset \( L_1 \subset \tilde{H}^{m-1}(A_1)\setminus\{0\} \) satisfying two properties: \[ K^*(X,A) \cup (\iota(A,A_0)^*)^{-1}(L_0)  = K^*(X,A) \cup (\iota(A,A_1)^*)^{-1}(L_1) \] and if \( X \) is a surface with coboundary \( \supset L_0 \), then \( X \) is a surface with coboundary \( \supset L_1 \). 
		\end{lem}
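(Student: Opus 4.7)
The plan is to use the homotopy $f$ to transfer cohomological information from $A_0$ to $A_1$. Because $f_0\colon \{0\}\times Y \to A_0$ is a homeomorphism, each $a\in A_0$ determines a unique $y_a\in Y$ with $f(0,y_a)=a$, and $a\mapsto y_a$ is continuous. Define $g\colon A_0 \to A_1$ by $g(a):= f(1,y_a)$. Then $(a,t)\mapsto f(t,y_a)$ is a continuous homotopy in $X$ from the inclusion $i_0:=\iota(X,A_0)$ to $i_1\circ g$, where $i_1:=\iota(X,A_1)$. By homotopy invariance of \v{C}ech cohomology,
\[ i_0^* \;=\; g^*\circ i_1^* \qquad\text{on } \tilde{H}^{m-1}(X). \]
Combining this with the factorizations $i_0^* = \iota(A,A_0)^*\circ \iota(X,A)^*$ and $i_1^* = \iota(A,A_1)^*\circ \iota(X,A)^*$ yields the key identity $\iota(A,A_0)^*(\alpha) = g^*\bigl(\iota(A,A_1)^*(\alpha)\bigr)$ for every $\alpha$ in the image of $\iota(X,A)^*$.

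Next I would set $L_1 := (g^*)^{-1}(L_0)\setminus\{0\}\subset \tilde{H}^{m-1}(A_1)\setminus\{0\}$; note $0\notin L_0$ forces $0\notin (g^*)^{-1}(L_0)$, so this does not lose anything. To verify the set equality, pick $\alpha\in \tilde{H}^{m-1}(A)$. If $\alpha\in K^*(X,A)$ then $\alpha$ belongs to both sides; otherwise $\alpha=\iota(X,A)^*(\beta)$ for some $\beta$, and the key identity produces the chain of equivalences
\[ \iota(A,A_0)^*(\alpha)\in L_0 \iff g^*\bigl(\iota(A,A_1)^*(\alpha)\bigr)\in L_0 \iff \iota(A,A_1)^*(\alpha)\in L_1. \]
For the second conclusion, suppose $X$ is a surface with coboundary $\supset L_0$, i.e.\ $L_0\cap \image(i_0^*)=\emptyset$. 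If some $\gamma\in L_1$ lay in $\image(i_1^*)$, say $\gamma = i_1^*(\beta)$, then $g^*(\gamma) = g^*(i_1^*(\beta)) = i_0^*(\beta)\in L_0\cap \image(i_0^*)$, a contradiction. Hence $L_1\subset K^*(X,A_1)$, and $X$ is a surface with coboundary $\supset L_1$.

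The only real obstacle lies in the homotopy factorization of $i_0$ through $A_1$: this is exactly where the hypothesis that $f_0$ is a homeomorphism is used, since $f_0^{-1}$ provides a continuous, $a$-parameterized choice of track $t\mapsto f(t,y_a)$ in $X$ that starts at $a\in A_0$ and ends in $A_1$. Once that factorization is established, and given that \v{C}ech cohomology is contravariantly functorial and homotopy invariant, the rest is diagram chasing that parallels the proof of Lemma \ref{lem:6A}.
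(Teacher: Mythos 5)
Your proof is correct and is essentially the same as the paper's: the paper works with the shift map on \( \{0\}\times Y \to \{1\}\times Y \) and composes with \( f_0^* \) and \( f_1^* \) separately, while you package the whole composite into a single map \( g = f_1\circ(\text{shift})\circ f_0^{-1}: A_0\to A_1 \) — your \( L_1 = (g^*)^{-1}(L_0) \) coincides with the paper's \( (f_1^*)^{-1}(g^*)^{-1}f_0^*(L_0) \). The homotopy \( (a,t)\mapsto f(t,f_0^{-1}(a)) \) you use to get \( i_0^* = g^*\circ i_1^* \) is exactly the homotopy the paper invokes, so the rest of the diagram chase is the same argument in slightly tidier notation.
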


		\begin{proof} 
			Define \( g: \{0\} \times Y \to \{1\} \times Y \) where \( g(0,y) = (1,y) \). Let \( L_0' := f_0^*(L_0) \subset \tilde{H}^{m-1}(\{0\} \times Y) \) and \( L_1':= (g^*)^{-1}(L_0') \in \tilde{H}^{m-1}(\{1\} \times Y) \). Finally define \( L_1 = (f_1^*)^{-1}(L_1') \).

			Let \( h \in (\iota(A,A_0)^*)^{-1}(L_0) \) and suppose \( h\notin K^*(X,A) \). That is, suppose \( h=\iota(X,A)^*(x) \) for some \( x\in  \tilde{H}(X) \). We want to show \( \iota(A,A_1)^*(h) \in L_1 \). That is, \( f_1^*\iota(A,A_1)^*(h) \in  L_1' \). In other words, \(g^* f_1^*\iota(A,A_1)^*(h) \in L_0' \), or \[(f_0^*)^{-1}g^* f_1^*\iota(A,A_1)^*\iota(X,A)^*(x) \in  L_0 .\] Since we have assumed \( h=\iota(X,A)^*(x)\in  (\iota(A,A_0)^*)^{-1}(L_0) \), it suffices to show \[ \iota(A,A_0)^*\iota(X,A)^*(x) = (f_0^*)^{-1}g^* f_1^*\iota(A,A_1)^*\iota(X,A)^*(x), \] which is verified by the fact that \( \iota(X,A_0) f_0 \) and \( \iota(X,A_1) f_1 g \) are homotopic. The other containment is proved in a similar manner.

	 		For the last assertion, suppose \( h_1 \in L_1 \) and \( h_1 =  \iota(X, A_1)^*(x) \) for some \( x \in  \tilde{H}^{m-1}(X) \). By definition of \( L_1 \), we know  \( h_0 = (f_0^*)^{-1}g^* f_1^*(h_1) \in L_0 \). The inclusion map \( \iota(X,A_0) \) is homotopic to \( \iota(X,A_1) \circ f_1\circ g \circ f_0^{-1} \) via the homotopy \( \iota(X,A_t) \circ f_t \circ g_t \circ f_0^{-1} \) where \( A_t = f(\{t\} \times Y) \), \( f_t:= f\lfloor_{\{t\} \times Y} \) and \( g_t(0,y) = (t,y) \). Thus \( \iota(X,A_0)^* = (\iota(X,A_1) \circ f_1 \circ g \circ f_0^{-1})^* = (f_0^*)^{-1}g^* f_1^*\iota(X,A_1)^* \). 
	Then \( h_0 = (f_0^*)^{-1}g^* f_1^*(h_1) = (f_0^*)^{-1}g^* f_1^*\iota(X, A_1)^*(x) = \iota(X,A_0)^*(x) \), contradicting our assumption that \( X \) is a surface with coboundary \( \supset L_0 \).
		\end{proof}

		It follows from Lemma \ref{lem:7A} that if \( Z \) is a surface with coboundary \( \supset L_0 \), then \( Z \cup f(I \times Y) \) is a surface with coboundary \( \supset L_1 \). 

		\begin{lem}\label{lem:11A}
			Suppose \( X = \cup_{r=1}^N X_r \), \( A \subset X \), and \( A_r \subset X_r \) for each \( r \). Let \( B = A \cup_r A_r \). For each \( r \), let \( L_r \subset \tilde{H}^{m-1}(A_r)\setminus \{0\} \) and suppose \( X_r \) is a surface with coboundary \( \supset L_r \). Suppose \( L \subset \tilde{H}^{m-1}(A)\setminus \{0\} \) satisfies 
			\begin{equation}\label{eq:E}
		 		(\iota(B,A)^*)^{-1}(L) \subset \cup_r (\iota(B,A_r)^*)^{-1}(L_r).
			\end{equation}
			Then \( X \) is a surface with coboundary \( \supset L \).
		\end{lem}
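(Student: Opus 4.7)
The proof should be a diagram chase using the factorizations of the inclusion maps, dual to the way one would establish the analogous homology statement in \cite{reifenberg}. The plan is to argue by contradiction: suppose \( X \) is not a surface with coboundary \( \supset L \), so that some element \( \ell \in L \) lies in the image of \( \iota(X,A)^* \), say \( \ell = \iota(X,A)^*(\xi) \) for some \( \xi \in \tilde{H}^{m-1}(X) \). The aim is to produce an element of some \( L_r \) lying in the image of \( \iota(X_r, A_r)^* \), contradicting the hypothesis that \( X_r \) is a surface with coboundary \( \supset L_r \).

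First, since \( A \subset B \subset X \), the inclusion \( A\hookrightarrow X \) factors as \( A\hookrightarrow B\hookrightarrow X \); by functoriality this gives \( \iota(X,A)^* = \iota(B,A)^* \circ \iota(X,B)^* \). Set \( \eta := \iota(X,B)^*(\xi) \in \tilde{H}^{m-1}(B) \). Then \( \iota(B,A)^*(\eta) = \ell \in L \), so \( \eta \in (\iota(B,A)^*)^{-1}(L) \). By the hypothesis \eqref{eq:E}, there exists \( r \) with \( \eta \in (\iota(B,A_r)^*)^{-1}(L_r) \); in other words, \( \iota(B,A_r)^*(\eta) \in L_r \).

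Next I would push this element through the second factorization: since \( A_r\subset X_r \subset X \), the inclusion \( A_r\hookrightarrow X \) factors both as \( A_r\hookrightarrow B\hookrightarrow X \) and as \( A_r\hookrightarrow X_r\hookrightarrow X \), yielding
\begin{equation*}
\iota(B,A_r)^* \circ \iota(X,B)^* \;=\; \iota(X,A_r)^* \;=\; \iota(X_r,A_r)^* \circ \iota(X,X_r)^*.
\end{equation*}
Applying this to \( \xi \) gives
\begin{equation*}
\iota(B,A_r)^*(\eta) \;=\; \iota(X_r,A_r)^*\bigl(\iota(X,X_r)^*(\xi)\bigr),
\end{equation*}
which lies in the image of \( \iota(X_r,A_r)^* \). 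Combined with \( \iota(B,A_r)^*(\eta) \in L_r \), this contradicts the assumption that \( X_r \) is a surface with coboundary \( \supset L_r \), completing the proof.

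There is no real obstacle here beyond bookkeeping: the argument is a pure naturality/diagram chase, entirely parallel in form to the homology lemma in Adams' appendix. The one point to keep straight is the contravariance of \( \tilde{H}^{m-1} \) — the inclusion factorizations compose in the opposite order from the homological case — and the fact that the hypothesis \eqref{eq:E} is precisely what is needed to transfer the preimage under \( \iota(B,A)^* \) to a preimage under \( \iota(B,A_r)^* \) for some \( r \), matching the two factorizations of \( \iota(X,A_r) \).
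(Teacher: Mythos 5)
Your proof is correct and is essentially the same argument as the paper's, which runs the identical diagram chase in direct (contrapositive) form rather than by contradiction: both use the factorizations \( \iota(X,A)^* = \iota(B,A)^*\iota(X,B)^* \) and \( \iota(B,A_r)^*\iota(X,B)^* = \iota(X_r,A_r)^*\iota(X,X_r)^* \) together with hypothesis \eqref{eq:E} to conclude that no element of \( \tilde{H}^{m-1}(X) \) can restrict into \( L \).
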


		\begin{proof}
			Let \( k\in \tilde{H}^{m-1}(X) \). Then \( \iota(X,A)^* k = \iota(B,A)^* i(X,B)^* k \). By assumption, it suffices to show that \( \iota(X,B)^* k \) is not contained in \( (\iota(B, A_r)^*)^{-1} (L_r) \) for each \( r \). In other words, it suffices to show that \( \iota(X,A_r)^* k \) is not contained in \( L_r \), or equivalently, \( \iota(X_r,A_r)^*\iota(X,X_r)^* k \) is not contained in \( L_r \). Indeed, this is true since the image of \( \iota(X_r,A_r)^* \) is disjoint from \( L_r \) by assumption.
		\end{proof}

		\begin{lem}\label{lem:12A}
		 	Under the same assumptions of Lemma \ref{lem:11A}, suppose further that \( X_r \) and \( A_r \) are compact for each \( r \), that \( A \cap X_r \subset A_r \) for each \( r \), and that \( X_r \cap X_s = A_r \cap A_s \) for \( r \ne s \). Then \[ K^*(X,A) = \{x\in \tilde{H}^{m-1}(A): (\iota(B,A)^*)^{-1}(x)\subset \cup_r (\iota(B,A_r)^*)^{-1}(K^*(X_r,A_r))\}. \]
		\end{lem}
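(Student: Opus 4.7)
The plan is to prove both inclusions. For $\supset$: assume $x$ satisfies the right-hand side condition, and suppose for contradiction that $x=\iota(X,A)^*(k)$ for some $k\in\tilde H^{m-1}(X)$. Setting $y:=\iota(X,B)^*(k)$ gives $\iota(B,A)^*(y)=x$, so by the hypothesis there is some $r$ with $\iota(B,A_r)^*(y)\in K^*(X_r,A_r)$. But $\iota(B,A_r)^*(y)=\iota(X_r,A_r)^*(\iota(X,X_r)^*(k))$ is visibly in the image of $\iota(X_r,A_r)^*$, a contradiction.

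For $\subset$, I first extract a structural consequence of the hypotheses added in this lemma. Since $A\subset X=\cup_r X_r$ and $A\cap X_r\subset A_r$, every point of $A$ lies in some $A_r$, so $B=\cup_r A_r$; combined with $X_r\cap X_s=A_r\cap A_s$ for $r\ne s$, this forces the disjoint decomposition $X\setminus B=\sqcup_r(X_r\setminus A_r)$. Consequently $X/B\simeq\bigvee_r X_r/A_r$ as compact Hausdorff spaces, and strong excision for \v{C}ech cohomology of compact Hausdorff pairs yields a canonical isomorphism
\[
\tilde H^*(X,B)\;\simeq\;\bigoplus_r\tilde H^*(X_r,A_r)
\]
induced by the inclusions $(X_r,A_r)\hookrightarrow(X,B)$. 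By naturality of the long exact sequences of pairs, the connecting map $\delta:\tilde H^{m-1}(B)\to\tilde H^m(X,B)$ corresponds under this identification to $y\mapsto\bigoplus_r\delta_r(\iota(B,A_r)^*(y))$, where $\delta_r$ is the connecting map for $(X_r,A_r)$.

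I finish by proving the contrapositive of $\subset$. Suppose $x$ fails the right-hand side: there is some $y\in\tilde H^{m-1}(B)$ with $\iota(B,A)^*(y)=x$ and, for every $r$, $\iota(B,A_r)^*(y)=\iota(X_r,A_r)^*(k_r)$ for some $k_r\in\tilde H^{m-1}(X_r)$. Exactness of the long exact sequence of $(X_r,A_r)$ gives $\delta_r(\iota(B,A_r)^*(y))=0$ for each $r$, hence $\delta(y)=0$ by the identification above. Exactness of the long exact sequence of $(X,B)$ then produces $k\in\tilde H^{m-1}(X)$ with $\iota(X,B)^*(k)=y$, and so $\iota(X,A)^*(k)=\iota(B,A)^*\iota(X,B)^*(k)=\iota(B,A)^*(y)=x$, contradicting $x\in K^*(X,A)$. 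The main obstacle is setting up the excision identification correctly: it is precisely the extra hypotheses of Lemma~\ref{lem:12A} relative to Lemma~\ref{lem:11A} (compactness of each $X_r,A_r$, and the two intersection conditions $A\cap X_r\subset A_r$ and $X_r\cap X_s=A_r\cap A_s$) that force $X\setminus B$ to split as the disjoint union of the open strata $X_r\setminus A_r$, making $X/B$ a genuine wedge and legitimizing the splitting of $\tilde H^*(X,B)$.
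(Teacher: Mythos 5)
Your proof is correct and follows essentially the same route as the paper: one direction is a direct application of Lemma \ref{lem:11A}, and the other is the diagram chase through the long exact sequences of \( (X,A) \), \( (X,B) \), and the \( (X_r,A_r) \), hinging on the excision isomorphism \( H^m(X,B)\cong\bigoplus_r H^m(X_r,A_r) \). The only stylistic difference is how that isomorphism is obtained: you argue via the quotient identification \( X/B\simeq\bigvee_r X_r/A_r \) together with strong excision for compact Hausdorff pairs, whereas the paper invokes Eilenberg--Steenrod excision for proper triads and inducts on \( N \); both are legitimate for \v{C}ech cohomology on compact pairs.
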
 

		\begin{proof}
			By Lemma \ref{lem:11A}, \( \{x\in \tilde{H}^{m-1}(A): (\iota(B,A)^*)^{-1}(x)\subset \cup_r (\iota(B,A_r)^*)^{-1}(K^*(X_r,A_r))\}\subset K^*(X,A) \). To show the reverse inclusion, we chase the diagram below. The unlabeled maps are given by inclusions, the rows and column are exact (E-S Ch. I Thm. 8.6c,) and the isomorphism is due to excision: Assuming \( N=2 \), the isomorphism follows from E-S Ch. I Thm. 14.2c and Ch. X Thm. 5.4. The general case follows from induction on \( N \). The triangle commutes by functoriality, the top ``square'' commutes because \( \delta \) is a natural transformation, and the bottom square commutes because it does so on each summand of \( \oplus H^m(X_r,A_r). \) Thus, the diagram commutes.

			Let \( x \in K^*(X,A) \) and suppose \( p \in (\iota(B,A)^*)^{-1}(x) \). Suppose there is no \( r \) such that \( \iota(B,A_r)^*(p) \in K^*(X_r,A_r) \). Then \( y = \oplus (\iota(B,A_r)^*)(p) \in \image \oplus \iota(X_r,A_r)^* \). Since the bottom row of the diagram is exact, \( (\oplus \d)(y) = 0 \), hence \( \d p = 0 \), hence \( \d x=0 \). But the left column is exact, and this gives a contradiction, since by assumption \( x \) is not in the image of \( \iota(X,A)^* \).
			\begin{align}
				\xymatrix{&H^m(X,A) \ar@{<-}[d]^{\d}  \ar@{<-}[rrdd] &\quad & \quad  \\ &\tilde{H}^{m-1}(A) \ar@{<-}[d]  \ar@{<-}[dr]  &\quad & \quad \\
				&\tilde{H}^{m-1}(X)  \ar@{->}[r]   & \tilde{H}^{m-1}(B) \ar@{->}[d] \ar@{->}[r]^{\d} & H^m(X,B) \ar@{->}[d]^{\cong}\\
				&\oplus \tilde{H}^{m-1}(X_r)\ar@{->}[r]  &\oplus \tilde{H}^{m-1}(A_r) \ar@{->}[r]^{\oplus \d} &\oplus H^m(X_r,A_r).}
			\end{align}
		\end{proof}
  
		\begin{lem}\label{lem:13}
			Suppose \( A, X \) and \( C \) are compact, \( X \) is a surface with coboundary \( K^*(X,A) \supset L \) and \( \mathring{C}\cap A=\emptyset \). If \( Y\supset X\cap \fr\, C \) is a surface with coboundary \( \supset K^*(X\cap C, X\cap \fr\, C) \), then \( (X\setminus \mathring{C}) \cup Y \) is a surface with coboundary \( \supset L \).
		\end{lem}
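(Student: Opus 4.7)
The plan is to argue by contradiction using Mayer--Vietoris in \v{C}ech cohomology. Write $D := X\setminus\mathring{C}$ and $X':=D\cup Y$. Since $\mathring{C}\cap A=\emptyset$ and $A\subset X$, we have $A\subset D$, and $D\cap(X\cap C)=X\cap\fr\,C \subset Y$, so that $X=D\cup(X\cap C)$ is a closed cover of a compact Hausdorff space whose overlap sits inside $Y$. Suppose, toward a contradiction, that some $\ell\in L$ equals $\iota(X',A)^*(z)$ for a class $z\in\tilde{H}^{m-1}(X')$; we shall produce $z'\in\tilde{H}^{m-1}(X)$ with $\iota(X,A)^*(z')=\ell$, contradicting $\ell\in L\subset K^*(X,A)$.

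First, set $\zeta:=\iota(X',X\cap\fr\,C)^*(z)$. Since $X\cap\fr\,C\subset Y\subset X'$, functoriality gives $\zeta=\iota(Y,X\cap\fr\,C)^*\bigl(\iota(X',Y)^*(z)\bigr)$, so $\zeta$ lies in $\image\,\iota(Y,X\cap\fr\,C)^*$. By hypothesis $K^*(Y,X\cap\fr\,C)\supset K^*(X\cap C,X\cap\fr\,C)$; complementing in $\tilde{H}^{m-1}(X\cap\fr\,C)$ reverses this to $\image\,\iota(Y,X\cap\fr\,C)^*\subset\image\,\iota(X\cap C,X\cap\fr\,C)^*$. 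Hence some $w\in\tilde{H}^{m-1}(X\cap C)$ satisfies $\iota(X\cap C,X\cap\fr\,C)^*(w)=\zeta$.

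Now let $v:=\iota(X',D)^*(z)\in\tilde{H}^{m-1}(D)$. Functoriality gives $\iota(D,X\cap\fr\,C)^*(v)=\zeta=\iota(X\cap C,X\cap\fr\,C)^*(w)$, so the pair $(v,w)$ has vanishing image in the overlap term of the Mayer--Vietoris sequence for $X=D\cup(X\cap C)$. This sequence is available in \v{C}ech cohomology for a closed cover of a compact Hausdorff space, via excision plus the long exact sequence of a pair, using the same E--S theorems invoked in the proof of Lemma \ref{lem:12A}. Therefore some $z'\in\tilde{H}^{m-1}(X)$ satisfies $\iota(X,D)^*(z')=v$, and using $A\subset D$ one computes $\iota(X,A)^*(z')=\iota(D,A)^*(v)=\iota(D,A)^*\iota(X',D)^*(z)=\iota(X',A)^*(z)=\ell$, yielding the desired contradiction. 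The only real subtlety is that the coboundary hypothesis on $Y$ is precisely what converts the statement ``$\zeta$ extends to $Y$'' into ``$\zeta$ extends to $X\cap C$,'' i.e.\ the upgrade from the enlarged cap $Y$ back to the original cap $X\cap C$ needed to run Mayer--Vietoris on $X$; everything else is functoriality.
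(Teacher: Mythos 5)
Your proof is correct. It reaches the same conclusion by a genuinely different (though related) route: you run a direct Mayer--Vietoris argument on the closed cover $X=(X\setminus\mathring{C})\cup(X\cap C)$, whereas the paper first invokes its Lemma~\ref{lem:12A} to obtain the exact identity
\[
K^*(X,A)= \bigl\{x\in \tilde{H}^{m-1}(A): (\iota(A_2,A)^*)^{-1}(x)\subset \bigl((\iota(A_2,A_1)^*)^{-1} K^*(X\cap C,A_1)\bigr)\cup K^*(X_2,A_2)\bigr\}
\]
with $A_1=X\cap \fr\,C$, $X_2=X\setminus\mathring{C}$, $A_2=A\cup A_1$, and then feeds this into the gluing Lemma~\ref{lem:11A} with $Y$ in place of $X\cap C$. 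The two proofs rest on the same underlying machinery (excision and the Mayer--Vietoris/long exact sequence for compact pairs in \v{C}ech cohomology, as in E--S), but yours is self-contained and bypasses Lemmas~\ref{lem:11A} and~\ref{lem:12A} entirely, at the cost of re-deriving the relevant exactness by hand. Your identification of the pivot step is also accurate: the hypothesis $K^*(Y,A_1)\supset K^*(X\cap C,A_1)$, read as $\mathrm{im}\,\iota(Y,A_1)^*\subset \mathrm{im}\,\iota(X\cap C,A_1)^*$ after complementing in $\tilde{H}^{m-1}(A_1)$, is exactly what lets you trade the enlarged cap $Y$ for the original cap $X\cap C$ so that the Mayer--Vietoris sequence of $X$ can be used; the rest is functoriality, just as you say. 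One small point worth noting when you invoke reduced Mayer--Vietoris: in the edge case $X\cap\fr\,C=\emptyset$ the conclusion is immediate (restriction $\zeta$ is trivial and $X$ splits as a disjoint union), consistent with the paper's convention $\tilde{H}^0(\emptyset)=0$, so no generality is lost.
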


		\begin{proof}
			Let \( X_1 = X\cap C \), \( A_1=X\cap \fr\, C \), \( X_2=X\setminus \mathring{C} \), and \( A_2=A\cup A_1 \). Let \( L_1=K^*(X_1,A_1) \) and \( L_2=K^*(X_2, A_2) \). By Lemma \ref{lem:12A}, \[ K^*(X,A)= \left\{x\in \tilde{H}^{m-1}(A): (\iota(A_2,A)^*)^{-1}(x)\subset \left((\iota(A_2,A_1)^*)^{-1} L_1\right)\cup L_2\right\}. \] Now apply Lemma \ref{lem:11A}, using the set \( Y \) in place of \( X_1 \). The result follows, since \( L\subset K^*(X,A) \).
		\end{proof}

		\begin{lem}\label{lem:13A}
			Suppose \( A = A_1 \cup A_2 \) where \( A_1 \) and \( A_2 \) are compact. Let \( D = A_1 \cap A_2 \) and suppose \( B\supset D \) is compact. Let \( m\geq 2 \). Suppose the homomorphism \( \iota(B,D)^*: \tilde{H}^{m-2}(B)\to \tilde{H}^{m-2}(D) \) is zero. Then \[ (\iota(A\cup B,A)^*)^{-1} (H^{m-1}(A)\setminus \{0\}) \subset \cup_{i=1,2}(\iota(A\cup B, A_i\cup B)^*)^{-1} (H^{m-1} (A_i\cup B )\setminus \{0\}). \]
		 \end{lem}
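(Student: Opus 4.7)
The plan is to apply Mayer--Vietoris on two different decompositions and invoke naturality of the connecting homomorphism. Write $A=A_1\cup A_2$ with intersection $D$, and observe that since $B\supset D$, we have $A\cup B=(A_1\cup B)\cup(A_2\cup B)$ with intersection $(A_1\cup B)\cap(A_2\cup B)=(A_1\cap A_2)\cup B=D\cup B=B$. Since all sets in sight are compact, the Mayer--Vietoris sequence for \v{C}ech cohomology is available by the standard tautness/continuity properties of \v{C}ech theory (cf. E-S Ch. X), yielding two long exact sequences that fit into a commutative ladder via the restriction maps.

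From the dimensions $m-2$, $m-1$ pieces of these sequences one extracts the commutative square
\[\xymatrix{\tilde{H}^{m-2}(B) \ar@{->}[r]^-{\delta} \ar@{->}[d]_{\iota(B,D)^*} & H^{m-1}(A\cup B) \ar@{->}[d]^{\iota(A\cup B,A)^*} \\ \tilde{H}^{m-2}(D) \ar@{->}[r]^-{\delta} & H^{m-1}(A)}\]
(which commutes by naturality of $\delta$), together with the exact portion
\[\tilde{H}^{m-2}(B)\xrightarrow{\delta} H^{m-1}(A\cup B)\longrightarrow H^{m-1}(A_1\cup B)\oplus H^{m-1}(A_2\cup B).\]

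The proof would then proceed by contrapositive. Suppose $x\in H^{m-1}(A\cup B)$ satisfies $\iota(A\cup B,A_i\cup B)^*(x)=0$ for both $i=1,2$. Exactness provides $\xi\in\tilde{H}^{m-2}(B)$ with $\delta(\xi)=x$. The commutative square above, combined with the hypothesis $\iota(B,D)^*=0$, then yields $\iota(A\cup B,A)^*(x)=\delta(\iota(B,D)^*(\xi))=\delta(0)=0$. Equivalently, whenever $\iota(A\cup B,A)^*(x)\in H^{m-1}(A)\setminus\{0\}$, at least one $i\in\{1,2\}$ must satisfy $\iota(A\cup B,A_i\cup B)^*(x)\neq 0$, which is the claimed inclusion.

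The main obstacle is invoking Mayer--Vietoris for \v{C}ech cohomology in a form that handles the reduced/unreduced distinction correctly, so that the border case $m=2$ goes through (where $\tilde{H}^0$ and $H^0$ differ and the hypothesis is essentially used). Since the relevant sets are nonempty compact subsets of $\R^n$, the excisive/tautness hypotheses of the \v{C}ech MV theorem are satisfied and the reduced variant of the sequence is exact, so no geometric estimates are needed---only this piece of algebraic-topological bookkeeping.
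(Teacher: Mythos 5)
Your proposal is correct and follows essentially the same route as the paper's proof: you use the map of Mayer--Vietoris sequences induced by the inclusion of triads $(A,A_1,A_2)\hookrightarrow(A\cup B,A_1\cup B,A_2\cup B)$, observe that $(A_1\cup B)\cap(A_2\cup B)=B$, and chase the contrapositive through the commutative square involving the connecting homomorphisms and the hypothesis $\iota(B,D)^*=0$. The only small difference is that the paper treats the case $D=\emptyset$ separately (there the bottom MV map out of $H^{m-1}(A)$ is injective), whereas your argument handles it implicitly via the convention $\tilde H^{m-2}(\emptyset)=0$; both are fine.
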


		\begin{proof} 
			Suppose \( D \) is non-empty. The map \( (A,A_1,A_2) \to (A\cup B, A_1\cup B,A_2\cup B) \) is a map of compact, and hence proper triads (E-S Ch. X Thm 5.4,) and thus carries the reduced Mayer-Vietoris sequence of the second into the first (E-S 15.4c.) Chase the resulting commutative diagram, observing that \( \cup_{i=1,2}(\iota(A\cup B, A_i\cup B)^*)^{-1} (H^{m-1} (A_i\cup B )\setminus \{0\}) = (\iota_1^*,\iota_2^*)^{-1}(H^{m-1}(A_1\cup B)\oplus H^{m-1}(A_2\cup B)\setminus \{0\}) \):
			\begin{align}
				\xymatrixcolsep{3pc}\xymatrix{\tilde{H}^{m-2}(B) \ar@{->}[d]^0 \ar@{->}[r]^\Delta &H^{m-1}(A\cup B) \ar@{->}[d] \ar@{->}[r]^-{(\iota_1^*,\iota_2^*)} &H^{m-1}(A_1\cup B)\oplus H^{m-1}(A_2\cup B) \ar@{->}[d] \\
				\tilde{H}^{m-2}(D) \ar@{->}[r]^\Delta &H^{m-1}(A) \ar@{->}[r] &H^{m-1}(A_1)\oplus H^{m-1}(A_2). }
			\end{align}
			If \( D \) is empty, then we still have the right hand commuting square, and the map out of \( H^{m-1}(A) \) is an isomorphism, and in particular, injective. This proves the lemma.
		\end{proof}

		\begin{lem}\label{lem:15A}
			Suppose \( A = \cup_{r=0}^N A_r \), where each \( A_r \) is compact. Let \( D_r = A_0 \cap A_r, \quad 1 \le r \le N \) and suppose \( A_r \cap A_s = \emptyset \), \( 1 \le r < s \le N \). Let \( m\geq 2 \). For each \( 1\le r\le N \), suppose \( B_r\supset D_r \) is compact and that the homomorphism \( \iota(B_r,D_r)^*: \tilde{H}^{m-2}(B_r) \to \tilde{H}^{m-2}(D_r) \) is zero. Furthermore, suppose that the intersection \( A_r\cap B_s \) is empty for all \( 1\leq r<s\leq N \). Let
			\begin{align*}
				&C = A \cup_{r=1}^N B_r, \\
				&C_0 = A_0 \cup_{r=1}^N B_r, \,\, \mathrm{ and}\\
				&C_r = A_r \cup B_r, \, 1 \le r \le N.
			\end{align*}
			Then \[ (\iota(C,A)^*)^{-1} (H^{m-1}(A)\setminus \{0\}) \subset \cup_{r=0}^N (\iota(C, C_r )^*)^{-1} (H^{m-1} (C_r)\setminus \{0\}). \]
		\end{lem}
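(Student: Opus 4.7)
The plan is to prove Lemma \ref{lem:15A} by induction on $N$, using Lemma \ref{lem:13A} as the key tool. The base case $N=1$ is Lemma \ref{lem:13A} directly: with $A=A_0\cup A_1$, $D=D_1$, $B=B_1$, the conclusion reads precisely as desired, identifying $C_0=A_0\cup B_1$ and $C_1=A_1\cup B_1$.

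For the inductive step, I would peel off $A_N$ first (rather than $A_1$), exploiting the asymmetric hypothesis $A_r\cap B_s=\emptyset$ for $r<s$. Let $\tilde A := \cup_{r=0}^{N-1}A_r$. Since $A_r\cap A_N=\emptyset$ for $1\le r\le N-1$, we have $\tilde A\cap A_N = D_N$. Applying Lemma \ref{lem:13A} to $\tilde A\cup A_N$ with filler $B_N$ (whose cohomology hypothesis $\iota(B_N,D_N)^*=0$ holds by assumption) yields
\[
(\iota(A\cup B_N,A)^*)^{-1}(H^{m-1}(A)\setminus\{0\}) \subset E_1 \cup E_2,
\]
where $E_1=(\iota(A\cup B_N,\tilde A\cup B_N)^*)^{-1}(H^{m-1}(\tilde A\cup B_N)\setminus\{0\})$ and $E_2=(\iota(A\cup B_N,C_N)^*)^{-1}(H^{m-1}(C_N)\setminus\{0\})$. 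Given $x\in\tilde{H}^{m-1}(C)$ with $\iota(C,A)^*(x)\ne 0$, the factorization $A\hookrightarrow A\cup B_N\hookrightarrow C$ shows $y:=\iota(C,A\cup B_N)^*(x)$ lies in the left-hand preimage. If $y\in E_2$, then functoriality gives $\iota(C,C_N)^*(x)\ne 0$, placing $x$ in the $C_N$-term of the desired union.

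If instead $y\in E_1$, then $\iota(C,\tilde A\cup B_N)^*(x)\ne 0$, and the crucial observation is that $\tilde A\cup B_N$ is a bouquet satisfying the hypotheses of Lemma \ref{lem:15A} with index $N-1$, after relabeling $A_0':=A_0\cup B_N$ and keeping $A_1,\ldots,A_{N-1}$ and $B_1,\ldots,B_{N-1}$ unchanged. Indeed, the asymmetric hypothesis gives $A_r\cap B_N=\emptyset$ for $1\le r<N$, so $D_r'=A_0'\cap A_r=D_r$ and all the zero-map conditions on $\iota(B_r,D_r)^*$ persist; moreover $C':=A_0'\cup A_1\cup\cdots\cup A_{N-1}\cup B_1\cup\cdots\cup B_{N-1} = C$, $C_0'=C_0$, and $C_r'=C_r$ for $1\le r\le N-1$. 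The inductive hypothesis therefore produces $x\in \cup_{r=0}^{N-1}(\iota(C,C_r)^*)^{-1}(H^{m-1}(C_r)\setminus\{0\})$, completing the proof.

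The main obstacle is spotting the correct order of peeling: peeling off $A_N$ (rather than some other $A_k$) uses the asymmetric intersection hypothesis in exactly the right way, so that absorbing $B_N$ into $A_0$ does not enlarge any $D_r$ for $r<N$. This is what allows the inductive hypothesis to apply verbatim without modifying the $B_r$'s or re-verifying their cohomology conditions.
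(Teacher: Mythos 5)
Your inductive argument is essentially the same as the paper's proof, which also peels off $A_N$ first and applies Lemma \ref{lem:13A} iteratively. The paper organizes this as a downward recursion on auxiliary sets $E_k = A_0\cup\cdots\cup A_k\cup B_{k+1}\cup\cdots\cup B_N$ rather than as an explicit induction on $N$, but the bookkeeping is identical: your relabeled $A_0' = A_0\cup B_N$ makes the reduced ``$A$'' equal to the paper's $E_{N-1}$, and subsequent steps match up exactly.

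There is one small error. The claim $C' = C$ is false in general: we have $C = C'\cup A_N$, and the hypotheses give no reason for $A_N$ to lie in $C' = A_0\cup\cdots\cup A_{N-1}\cup B_1\cup\cdots\cup B_N$ (only $D_N = A_0\cap A_N\subset B_N$ is forced). This does not defeat the argument, but your last sentence cannot literally invoke the inductive hypothesis on $x\in\tilde{H}^{m-1}(C)$, since the hypothesis is a statement about $\tilde{H}^{m-1}(C')$. The needed repair is the same factorization you already used for the $C_N$-case: set $z := \iota(C,C')^*(x)\in\tilde{H}^{m-1}(C')$, note $\iota(C',A')^*(z) = \iota(C,A')^*(x)\neq 0$, apply the inductive hypothesis to $z$ to find $r\leq N-1$ with $\iota(C',C_r)^*(z)\neq 0$, and then use $\iota(C,C_r)^* = \iota(C',C_r)^*\circ\iota(C,C')^*$ to conclude $\iota(C,C_r)^*(x)\neq 0$. (This is exactly the paper's step ``taking the inverse image in $H^{m-1}(C)$ by the map $\iota(C,E_k\cup B_k)^*$.'') With that one-line fix, the proof is complete.
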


		\begin{proof} 
			For \( 0\leq k\leq N-1 \), let \( E_k=A_0\cup\cdots\cup A_k\cup B_{k+1}\cup\cdots\cup B_N \), and Let \( E_N=A \). For \( 1\leq k\leq N \), we may apply Lemma \ref{lem:13A} to the sets \( ``A_1"=A_k \), \( ``A_2"=A_0\cup\cdots\cup A_{k-1}\cup B_{k+1}\cup\cdots\cup B_N \), and \( ``B"=B_k \), since the assumption \( A_k\cap B_j=\emptyset \) for all \( 1\leq k<j\leq N \) guarantees that \( ``A_1"\cap ``A_2"=D_k \). The following inclusion therefore holds for all \( 1\leq k\leq N \):
			\begin{align*}
				(\iota(E_k\cup B_k, E_k)^*)^{-1}&(H^{m-1}(E_k)\setminus\{0\})\subset\\
				 (\iota(E_k\cup B_k, E_{k-1})^*)^{-1}(H^{m-1}(E_{k-1})\setminus \{0\})&\cup (\iota(E_k\cup B_k, C_k)^*)^{-1}(H^{m-1}(C_k)\setminus \{0\}).
			\end{align*}
			Taking the inverse image in \( H^{m-1}(C) \) of the above sets by the map \( \iota(C, E_k\cup B_k)^* \), this yields
			\begin{align*}
				(\iota(C, E_k)^*)^{-1}&(H^{m-1}(E_k)\setminus\{0\})\subset\\
				(\iota(C, E_{k-1})^*)^{-1}(H^{m-1}(E_{k-1})\setminus \{0\})&\cup (\iota(C, C_k)^*)^{-1}(H^{m-1}(C_k)\setminus \{0\}).
			\end{align*}
			The result follows from downward induction on \( k \) starting at \( k=N \), since \( E_N=A \) and \( E_0=C_0 \).
		\end{proof}

		\begin{lem}\label{lem:16A}
			Suppose \( A = \cup_{r=0}^N A_r \) where each \( A_r \) is compact. Let \( D_r = A_{r-1}\cap A_r,\quad 1 \le r \le N \) and suppose \( A_r \cap A_s = \emptyset \) if \( |r-s| > 1 \). Let\footnote{Note the strict inequality.} \( m>2 \). For each \( 1\le r\le N \), suppose \( B_r\supset D_r \) is compact and that the homomorphism \( \iota(B_r,D_r)^*: H^{m-2}(B_r) \to H^{m-2}(D_r) \) is zero. Let \( B_0 = B_{N+1} = \emptyset \), and suppose further that \( B_r \cap A_{r-1}=D_r \) and \( B_r \cap A_s=\emptyset \) for all \( 1\leq r \leq N \) and \( 0\leq s < r-1 \). Let
			\begin{align*}
				&C = A \cup_{r=1}^N B_r,\\
				&C_{-1}= \cup_{r=1}^N B_r, \,\, \mathrm{ and}\\
				&C_r = B_r \cup A_r \cup B_{r+1},\,0\le r\le N.
			\end{align*}
			Then \[ (\iota(C,A)^*)^{-1}(H^{m-1}(A)\setminus \{0\}) \subset \cup_{r=-1}^N(\iota(C,C_r)^*)^{-1}(H^{m-1}(C_r)\setminus \{0\}). \]

			Furthermore, if \( B_r\cap B_s=\emptyset \) for all \( r\neq s \), then \[ (\iota(C,A)^*)^{-1}(H^{m-1}(A)\setminus \{0\}) \subset \cup_{r=0}^N(\iota(C,C_r)^*)^{-1}(H^{m-1}(C_r)\setminus \{0\}). \]
		\end{lem}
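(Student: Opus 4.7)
The argument will mirror that of Lemma~\ref{lem:15A}, replacing its star-shaped combinatorics (where only $A_0$ meets the other $A_r$'s) with the chain combinatorics here (each $A_k$ meets both $A_{k-1}$ and $A_{k+1}$). Define the filtration
\[
T_k := A_0 \cup A_1 \cup \cdots \cup A_k \cup C_{-1}, \qquad -1 \le k \le N,
\]
with the convention that $T_{-1} = C_{-1}$ and $T_N = C$. I would prove by induction on $k$ that
\[
(\iota(C,T_k)^*)^{-1}(H^{m-1}(T_k) \setminus \{0\}) \subset \bigcup_{r=-1}^{k} (\iota(C,C_r)^*)^{-1}(H^{m-1}(C_r) \setminus \{0\}).
\]
The base case $k = -1$ is immediate since $T_{-1} = C_{-1}$, and the first assertion of the lemma follows at $k = N$ by factoring $\iota(C,A)^*$ through $\iota(C,T_N)^*$.

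For the inductive step, decompose $T_k = A_k \cup T_{k-1}$ inside $C$ and apply Lemma~\ref{lem:13A}. The key computation is
\[
A_k \cap T_{k-1} = D_k \cup D_{k+1}
\]
(with $D_{N+1}$ interpreted as empty when $k = N$), using $A_k \cap A_j = \emptyset$ for $|k-j| > 1$, the hypothesis $B_r \cap A_s = \emptyset$ for $s < r-1$ (which rules out $A_k \cap B_r$ for $r > k+1$), and $A_k \cap B_{k+1} = D_{k+1}$. Under the additional hypothesis $B_r \cap B_s = \emptyset$ for $r \neq s$ one may take ``$B$'' $= B_k \cup B_{k+1}$ in Lemma~\ref{lem:13A}: this is then $B_k \sqcup B_{k+1}$, and since $D_k \subset B_k$ and $D_{k+1} \subset B_{k+1}$ one also has $D_k \cup D_{k+1} = D_k \sqcup D_{k+1}$. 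Using $m > 2$, reduced \v{C}ech cohomology splits as a direct sum on both sides, and the restriction map factors as $\iota(B_k,D_k)^* \oplus \iota(B_{k+1},D_{k+1})^*$, which is zero by hypothesis. Since $A_k \cup B_k \cup B_{k+1} = C_k$ and $B_k, B_{k+1} \subset C_{-1} \subset T_{k-1}$, Lemma~\ref{lem:13A} then delivers the inductive step, proving the stronger second conclusion of the lemma.

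The main obstacle is the first assertion without the disjointness hypothesis on the $B_r$'s. In that case $\iota(B_k \cup B_{k+1}, D_k \cup D_{k+1})^*$ need not vanish, because reduced \v{C}ech classes on the overlap $B_k \cap B_{k+1}$ can survive restriction to $D_k \cup D_{k+1}$; this is exactly the obstruction that the extra summand $(\iota(C,C_{-1})^*)^{-1}(H^{m-1}(C_{-1}) \setminus \{0\})$ is designed to absorb. I would handle the general case by chasing the Mayer--Vietoris sequence for $T_k = A_k \cup T_{k-1}$ directly rather than invoking Lemma~\ref{lem:13A} as a black box: any class in $\tilde{H}^{m-1}(T_k)$ not accounted for by restriction to $C_k$ or $T_{k-1}$ must arise via the connecting map from $\tilde{H}^{m-2}(D_k \cup D_{k+1})$, and using that each $\iota(B_r,D_r)^*$ vanishes individually, one can show that any such obstruction factors through cohomology supported on $B_k \cap B_{k+1} \subset C_{-1}$, whence it is detected by $\iota(C,C_{-1})^*$. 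Routing these overlap contributions through $C_{-1}$ is the main technical bookkeeping task.
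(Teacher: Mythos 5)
Your inductive scheme is the right shape (it is what the paper does), but the filtration you chose is wrong, and this is not a cosmetic issue: it breaks the one place where Lemma \ref{lem:13A} is actually applied. You set \( T_{k-1} = A_0\cup\cdots\cup A_{k-1}\cup C_{-1} \), which contains \emph{every} \( B_r \), including \( B_1,\dots,B_k \). The hypotheses only control \( A_k\cap B_r \) for \( r\ge k+1 \): the clause \( B_r\cap A_s=\emptyset \) for \( s<r-1 \) kills \( A_k\cap B_r \) when \( r>k+1 \), and \( B_{k+1}\cap A_k=D_{k+1} \) handles \( r=k+1 \), but nothing is assumed about \( B_r\cap A_k \) for \( r\le k \) (in particular \( B_k\cap A_k \) is unconstrained — only \( B_k\cap A_{k-1}=D_k \) is given). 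So your ``key computation'' \( A_k\cap T_{k-1}=D_k\cup D_{k+1} \) is false in general, and Lemma \ref{lem:13A} cannot be invoked with the pieces you name. The paper's proof avoids this precisely by shrinking the filtration as \( k \) grows: it uses \( E_k=A_0\cup\cdots\cup A_k\cup B_{k+1}\cup\cdots\cup B_{N} \), and applies Lemma \ref{lem:13A} with \( ``A_1"=A_k \), \( ``A_2"=A_0\cup\cdots\cup A_{k-1}\cup B_{k+1}\cup\cdots\cup B_{N} \) (note: \( B_k \) is deliberately dropped from \( ``A_2" \)), and \( ``B"=B_k\cup B_{k+1} \). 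Then \( ``A_1"\cap ``A_2"=D_k\cup D_{k+1} \) genuinely follows from the hypotheses, and \( ``A_2"\cup ``B"=E_{k-1} \), giving the telescoping inclusion that terminates at \( E_{-1}=C_{-1} \) (the source of the \( r=-1 \) term).

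You also misdiagnose the role of the disjointness hypothesis \( B_r\cap B_s=\emptyset \). The restriction \( \iota(B_k\cup B_{k+1},D_k\cup D_{k+1})^* \) vanishes under the stated hypotheses \emph{regardless} of whether \( B_k \) and \( B_{k+1} \) overlap: since \( D_k\sqcup D_{k+1} \) is a genuinely disjoint union and \( m-2\ge 1 \), its (reduced = unreduced) cohomology splits, and each component of the restriction factors as \( \iota(B_r,D_r)^*\circ\iota(B_k\cup B_{k+1},B_r)^* = 0 \). So there is no ``obstruction living on \( B_k\cap B_{k+1} \),'' and your proposed Mayer--Vietoris workaround is aimed at a problem that does not exist. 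What the disjointness of the \( B_r \)'s actually buys is the identification \( (\iota(C,C_{-1})^*)^{-1}(H^{m-1}(C_{-1})\setminus\{0\}) = \cup_r(\iota(C,B_r)^*)^{-1}(H^{m-1}(B_r)\setminus\{0\}) \) by additivity, after which each \( B_r\subset C_r \) lets you absorb the \( r=-1 \) term into the \( r\ge 0 \) terms — that is all that is needed for the second conclusion.
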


		\begin{proof}
			For \( 0\leq k\leq N \), let \( E_k=A_0\cup\cdots\cup A_k\cup B_{k+1}\cup\cdots\cup B_{N+1} \). Let \( E_{-1}=C_{-1} \) and \( D_0=\emptyset \). For \( 1\leq k\leq N \), let us apply Lemma \ref{lem:13A} to the sets \( ``A_1"=A_k \), \( ``A_2"=A_0\cup\cdots\cup A_{k-1}\cup B_{k+1}\cup\cdots\cup B_{N+1} \), and \( ``B"=B_k\cup B_{k+1} \). For \( k=0 \), use \( ``A_1"=A_0 \), \( ``A_2"=C_{-1} \), and \( ``B"=B_1 \). We may do so, because our assumption on the intersections \( B_r\cap A_s \) imply \( ``A_1"\cap ``A_2"=D_k\cup D_{k+1} \). This union being disjoint, the homomorphism \( \iota(``B",``D")^*=\iota(B_k\cup B_{k+1}, D_k\cup D_{k+1})^* \) is given by the direct sum \( \iota(B_k\cup B_{k+1}, D_k)^*\oplus \iota(B_k\cup B_{k+1},D_{k+1})^* \), both of which are zero. As in the proof of Lemma \ref{lem:15A}, the following inclusion therefore holds for all \( 0\leq k\leq N \):
			\begin{align*}
				(\iota(C, E_k)^*)^{-1}&(H^{m-1}(E_k)\setminus\{0\})\subset\\
				(\iota(C, E_{k-1})^*)^{-1}(H^{m-1}(E_{k-1})\setminus \{0\})&\cup (\iota(C, C_k)^*)^{-1}(H^{m-1}(C_k)\setminus \{0\}).
			\end{align*}
			This gives the first conclusion. If \( B_r\cap B_s=\emptyset \) for all \( r\neq s \), then by additivity, \[ (\iota(C, C_{-1})^*)^{-1}(H^{m-1}(C_{-1})\setminus \{0\})=\cup_r (\iota(C,B_r)^*)^{-1}(H^{m-1}(B_r)\setminus \{0\}).\] For each \( r \), we have \( (\iota(C,B_r)^*)^{-1}(H^{m-1}(B_r)\setminus \{0\})\subset (\iota(C,C_r)^*)^{-1}(H^{m-1}(C_r)\setminus \{0\}) \) by functoriality, thus giving the second conclusion.
		\end{proof}

		\begin{lem}\label{lem:17Apre}
			If the topological dimension \( \dim(A) \) of \( A \) is \( \le m-2 \), then \( H^{m-1}(A) = 0 \).
		\end{lem}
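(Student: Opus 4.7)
The plan is to argue directly from the definition of \v{C}ech cohomology as a direct limit over open covers, together with the combinatorial meaning of the covering dimension. \v{C}ech cohomology is defined as \( \check{H}^{m-1}(A;G) = \varinjlim_{\mathcal{U}} H^{m-1}(N(\mathcal{U});G) \), where the limit is taken over open covers \( \mathcal{U} \) of \( A \), \( N(\mathcal{U}) \) is the nerve of \( \mathcal{U} \), and transition maps are induced by refinement. To show that this direct limit is zero, it suffices to show that every element is killed by passage to a sufficiently fine refinement. In fact, it is enough to show that the cofinal system of open covers whose nerves have simplicial dimension \( \leq m-2 \) exists; since the simplicial cohomology of a complex in degrees strictly above its dimension vanishes, every element in the direct limit will then have a zero representative.

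The key input is the definition of topological (covering) dimension: \( \dim(A) \leq m-2 \) means that every open cover of \( A \) admits an open refinement of order \( \leq m-1 \), i.e.\ no point of \( A \) lies in more than \( m-1 \) members of the refinement. The nerve of such a refinement has no simplex of dimension \( \geq m-1 \); in particular, its \( (m-1) \)-skeleton is empty, so its simplicial cochain group in degree \( m-1 \) vanishes, and hence so does \( H^{m-1}(N(\mathcal{U}); G) \).

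Given an arbitrary cohomology class \( \xi \in \check{H}^{m-1}(A;G) \), represent it by a class \( \xi_\mathcal{U} \in H^{m-1}(N(\mathcal{U});G) \) for some open cover \( \mathcal{U} \). Apply the covering dimension hypothesis to obtain an open refinement \( \mathcal{V} \) of \( \mathcal{U} \) of order \( \leq m-1 \). Under the refinement map \( H^{m-1}(N(\mathcal{U});G)\to H^{m-1}(N(\mathcal{V});G) \), the image of \( \xi_\mathcal{U} \) lands in a group which is zero, and so \( \xi = 0 \) in the direct limit. Since \( \xi \) was arbitrary, \( \check{H}^{m-1}(A;G)=0 \), as required.

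The only real obstacle is making sure that the notion of ``topological dimension'' used in the statement agrees with the covering dimension, and that the \v{C}ech cohomology used is of the open-cover/nerve flavor; both conventions are standard in this setting (and are consistent with the Eilenberg--Steenrod references used elsewhere in the paper), so no additional work is required beyond recording them.
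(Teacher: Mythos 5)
Your proof is correct and takes essentially the same route as the paper: refine an arbitrary open cover to one whose nerve has no simplices of dimension \( m-1 \), so the class vanishes after pulling back along the refinement map in the direct limit. One small slip in phrasing: the \( (m-1) \)-skeleton of such a nerve is not empty (it is the whole nerve); what you mean, and what matters, is that the set of \( (m-1) \)-simplices is empty, so the degree-\( (m-1) \) simplicial cochain group is zero.
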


		\begin{proof}
			if \( m=1 \), the result is trivial. if \( m>1 \), then every open cover \( U \) admits a refinement \( V \) of order \( \le m-2 \) (\cite{hurewicz} Theorem V 1.) The nerve \( N \) of \( V \) is then a simplicial complex of dimension \( \le m-2 \), and so if \( x\in H^{m-1}(A) \) is represented by a simplicial cochain on the nerve of \( U \), it must pull back to the zero cochain on \( N \). Thus, \( x=0 \).
		\end{proof}

		\begin{lem}\label{lem:17A}
			 If \( \H^{m-1}(A) = 0 \), then \( H^{m-1}(A) = 0 \). 
		\end{lem}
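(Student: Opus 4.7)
The plan is to reduce the cohomological vanishing to the topological-dimension bound $\dim(A)\leq m-2$ and then invoke Lemma \ref{lem:17Apre}. The case $m=1$ is immediate: $\H^0$ is the counting measure, so $\H^0(A)=0$ forces $A=\emptyset$, whence $H^0(A;G)=0$ by the conventions fixed at the start of \S\ref{sub:cohomological}.

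For $m\geq 2$, the key ingredient is the classical theorem of Szpilrajn (see \cite{hurewicz}): for a separable metric space $X$ and a non-negative integer $n$, if $\H^{n+1}(X)=0$ then $\dim(X)\leq n$. Applied with $X=A$ and $n=m-2$, this gives $\dim(A)\leq m-2$, and Lemma \ref{lem:17Apre} then yields $H^{m-1}(A)=0$ as required. Note that the weaker pointwise inequality $\dim\leq\dim_H$ is not sufficient here, since one could a priori have $\dim_H(A)=m-1$ with $\H^{m-1}(A)=0$; the integer-valuedness of topological dimension is what allows the Szpilrajn statement in its sharp integer form to apply.

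If a self-contained argument for Szpilrajn's inequality is desired, one proceeds via the Lebesgue covering-dimension characterization: it suffices to show that every finite open cover of $A$ with Lebesgue number $\delta>0$ admits an open refinement of order $\leq m-2$, i.e.\ in which no $m$ sets share a common point. One begins by covering $A$ by balls $B(x_i,r_i)$ with $r_i<\delta/3$ and $\sum r_i^{m-1}$ arbitrarily small (possible by $\H^{m-1}(A)=0$), and then perturbs the radii via a Fubini-type argument on the product of $A$ with a small interval of radii, choosing $r_i'\in(r_i,2r_i)$ so that the modified balls meet in sufficiently transverse configurations to kill all $m$-fold intersections. This perturbation step is the main technical obstacle, since controlling the multiplicity of the resulting refinement is precisely where the measure hypothesis is consumed.
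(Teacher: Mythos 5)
Your proof is correct and follows the paper's approach exactly: handle $m=1$ by observing $\H^0(A)=0$ forces $A=\emptyset$, and for $m\geq 2$ invoke the Szpilrajn/Hurewicz--Wallman theorem (the paper cites it as \cite{hurewicz} Theorem VII 3) to get $\dim(A)\leq m-2$, then apply Lemma \ref{lem:17Apre}. The additional sketch of a self-contained proof of Szpilrajn's inequality is not needed (and is only a rough outline), but the main argument is the same as the paper's.
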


		\begin{proof}
			If \( m=1 \), the result is trivial. If \( m>1 \), then \( \dim(A) \le m-2 \) by \cite{hurewicz} Theorem VII 3 and we may apply \ref{lem:17Apre}.
		\end{proof}

		\begin{lem}\label{lem:21B}
			Suppose \( X \) is compact and \( X=\varprojlim X_i \), where \( \{X_i\} \) is a system of compact surfaces with coboundary \( \supset L \), directed under inclusion. Then \( X \) is a surface with coboundary \( \supset L \).
		\end{lem}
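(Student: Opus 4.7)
The plan is to invoke the continuity property of \v{C}ech cohomology for inverse limits of compact Hausdorff spaces, which states that if \( X = \varprojlim X_i \) is an inverse limit of compact spaces, then \( \tilde{H}^{m-1}(X) \simeq \varinjlim \tilde{H}^{m-1}(X_i) \), where the direct system is formed from the contravariantly induced maps on cohomology (see e.g.\ E-S Ch.\ X Thm.\ 3.1). Since the system \( \{X_i\} \) is directed under inclusion, we have inclusions \( \iota(X_j,X_i): X\hookrightarrow X_i \) whenever \( X_j\subset X_i \), and \( X\subset X_i \) for every \( i \).

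With this setup, the argument is a diagram chase. Fix an arbitrary \( x \in \tilde{H}^{m-1}(X) \); we will show \( \iota(X,A)^*(x)\notin L \). By continuity, there is an index \( i \) and a class \( x_i \in \tilde{H}^{m-1}(X_i) \) such that \( \iota(X_i,X)^*(x_i)=x \). The inclusion factorization \( \iota(X_i,A) = \iota(X_i,X)\circ \iota(X,A) \) yields, upon applying cohomology, the identity
\begin{equation*}
\iota(X,A)^*(x) \;=\; \iota(X,A)^*\bigl(\iota(X_i,X)^*(x_i)\bigr) \;=\; \iota(X_i,A)^*(x_i).
\end{equation*}
Since \( X_i \) is a surface with coboundary \( \supset L \), the class \( \iota(X_i,A)^*(x_i) \) lies outside \( L \), so \( \iota(X,A)^*(x)\notin L \) as well. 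As \( x \) was arbitrary, \( L \) is disjoint from the image of \( \iota(X,A)^* \), proving that \( X \) is a surface with coboundary \( \supset L \).

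The only non-routine ingredient is the continuity of \v{C}ech cohomology, which is the central reason \v{C}ech theory (rather than singular cohomology) is the correct framework for this problem: it guarantees that the coboundary condition passes through inverse limits of compacta. Once continuity is invoked, the lemma reduces to pure functoriality and the definition of a surface with coboundary \( \supset L \). This lemma will be used later to extract compact Hausdorff limits of minimizing sequences that still satisfy the cohomological spanning condition.
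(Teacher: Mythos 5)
Your proof is correct and follows essentially the same route as the paper: invoke continuity of \v{C}ech cohomology to see that the natural map \( \varinjlim \tilde{H}^{m-1}(X_i)\to \tilde{H}^{m-1}(X) \) is surjective, then use functoriality of the inclusions to conclude that the image of \( \iota(X,A)^* \) is the union of the images of the \( \iota(X_i,A)^* \), each of which misses \( L \). (There is a small typo in your setup where \( \iota(X_j,X_i) \) is written as a map out of \( X \); presumably you meant the inclusion \( X_j\hookrightarrow X_i \), and the subsequent argument is unaffected.)
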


		\begin{proof}
			By continuity of \v{C}ech cohomology, the obvious map \( \varinjlim \tilde{H}^{m-1}(X_i)\to \tilde{H}^{m-1}(X) \) is an isomorphism, and in particular a surjection, so the image of \( \iota(X,A)^* \) is the union of the images of \( \iota(X_i,A)^* \).
		\end{proof}

		\begin{lem}\label{lem:21C}
			If \( \{X_i\} \) is a sequence of compact surfaces with coboundary \( \supset L \) and \( X_i \to X \) in the Hausdorff metric, then \( X \) is a surface with coboundary \( \supset L \).
		\end{lem}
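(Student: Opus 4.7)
The plan is to reduce Lemma \ref{lem:21C} to the already-proven Lemma \ref{lem:21B} by realizing the Hausdorff limit \( X \) as the intersection of a decreasing family of compact surfaces with coboundary \( \supset L \). First, observe that since \( L \subset \tilde{H}^{m-1}(A)\setminus\{0\} \) is non-empty, the hypothesis that each \( X_i \) is a surface with coboundary \( \supset L \) forces \( A \subset X_i \); Hausdorff convergence then gives \( A \subset X \), and \( X \) is compact.

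For each \( j \in \N \), define \( Y_j := \overline{\bigcup_{i \geq j} X_i} \). Hausdorff convergence of \( X_i \) to the compact set \( X \) confines all but finitely many \( X_i \) to a fixed bounded neighborhood of \( X \), so \( Y_j \) is compact. Since \( X_j \subset Y_j \) and \( X_j \) is a surface with coboundary \( \supset L \), Lemma \ref{lem:7A} immediately yields that \( Y_j \) is a surface with coboundary \( \supset L \) as well. The sequence \( Y_1 \supset Y_2 \supset \cdots \) is then a system of compact surfaces with coboundary \( \supset L \), directed under inclusion, whose inverse limit is \( \bigcap_j Y_j \).

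It remains to identify \( \bigcap_j Y_j \) with \( X \): any \( p \in X \) is approximable by points \( x_i \in X_i \) with \( i \) arbitrarily large, so \( p \in Y_j \) for every \( j \); conversely any \( p \in \bigcap_j Y_j \) is a limit of points drawn from \( X_{i_k} \) with \( i_k \to \infty \), hence \( p \in X \) by Hausdorff convergence. Lemma \ref{lem:21B} applied to \( \{Y_j\} \) then completes the proof. The argument is essentially a bookkeeping exercise; the only mild subtlety is ensuring the enclosing sets \( Y_j \) are compact (handled by Hausdorff convergence) and genuinely form an inclusion-directed inverse system whose limit is \( X \), so that Lemma \ref{lem:21B} applies with no additional cohomological input beyond Lemma \ref{lem:7A}.
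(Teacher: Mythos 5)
Your argument is correct and essentially identical to the paper's: you set \( Y_j = \overline{\bigcup_{i\geq j} X_i} \), which (by Hausdorff convergence) equals the paper's \( Y_j = X \cup \bigcup_{i\geq j}^\infty X_i \), invoke Lemma \ref{lem:7A} to see each \( Y_j \) is a surface with coboundary \( \supset L \), and apply Lemma \ref{lem:21B} to the nested sequence with \( \bigcap_j Y_j = X \). You simply spell out the compactness and intersection verifications that the paper leaves implicit.
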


		\begin{proof}
			By Lemma \ref{lem:7A}, the sets \( Y_j = X \cup \cup_{i=j}^\i X_i \) satisfy the conditions of Lemma \ref{lem:21B}.
		\end{proof}

		\begin{lem}\label{lem:corespans}
			Suppose \( (X,A) \) is compact and \( X \) is a surface with coboundary \( \supset L \) such that \( \H^m(X\setminus A)<\infty \). If \( (Y,A)\subset (X,A) \) is compact, and \( \dim(X\setminus Y)\leq m-1 \), then \( Y \) is a surface with coboundary \( \supset L \). 
		\end{lem}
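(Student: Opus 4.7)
The plan is to exploit the factorization $A \hookrightarrow Y \hookrightarrow X$ to reduce the coboundary condition on $Y$ to the already-known coboundary condition on $X$. Since $\iota(X,A) = \iota(X,Y) \circ \iota(Y,A)$, functoriality of \v{C}ech cohomology gives $\iota(X,A)^* = \iota(Y,A)^* \circ \iota(X,Y)^*$ in degree $m-1$, and hence
\[
\mathrm{image}\,\iota(Y,A)^* \supset \mathrm{image}\,\iota(X,A)^*.
\]
These two images coincide as soon as the intermediate restriction $\iota(X,Y)^*: \tilde H^{m-1}(X) \to \tilde H^{m-1}(Y)$ is surjective. Granting this surjectivity, the hypothesis that $L$ is disjoint from $\mathrm{image}\,\iota(X,A)^*$ immediately forces $L$ to be disjoint from $\mathrm{image}\,\iota(Y,A)^*$, which is precisely the statement that $Y$ is a surface with coboundary $\supset L$.

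Surjectivity of $\iota(X,Y)^*$ will follow from the long exact cohomology sequence of the compact pair $(X,Y)$,
\[
\cdots \to \tilde H^{m-1}(X) \xrightarrow{\iota(X,Y)^*} \tilde H^{m-1}(Y) \xrightarrow{\delta} H^m(X,Y) \to \cdots,
\]
as soon as I verify that $H^m(X,Y) = 0$. This is a purely dimension-theoretic assertion: via the natural identification $\check H^m(X,Y;G) \cong H^m_c(X\setminus Y;G)$, valid for any closed pair in a compact Hausdorff space, the vanishing follows from the fact that compactly supported Alexander-Spanier (equivalently \v{C}ech) cohomology of a locally compact Hausdorff space vanishes strictly above its topological dimension, combined with the hypothesis $\dim(X\setminus Y)\leq m-1$. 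This is very much in the spirit of Lemma~\ref{lem:17Apre} and rests on the same reference \cite{hurewicz}.

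The main obstacle is establishing the vanishing $H^m(X,Y) = 0$: it is not a formal consequence of any of the earlier lemmas in \S\ref{sub:reifenberg_appendix}, and requires invoking the relationship between \v{C}ech cohomology of a compact pair and compactly supported cohomology of the open complement together with classical cohomological dimension theory. An alternative route would be a direct nerve argument, refining any given open cover of $X$ using $\dim(X\setminus Y)\leq m-1$ to control $(m+1)$-fold intersections lying in $X\setminus Y$; however, relative cocycles need not vanish on mixed simplices meeting $Y$ only partially, so some care is required to kill all $m$-dimensional relative data. Once the vanishing is in hand, the lemma reduces to the short diagram chase outlined above.
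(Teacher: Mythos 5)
Your proposal is correct, and the initial reduction (factor $\iota(X,A)^*$ through $Y$ and show $\iota(X,Y)^*$ is surjective) is exactly the paper's opening move. After that, though, your route diverges genuinely from the paper's. You appeal to the long exact cohomology sequence of the compact pair $(X,Y)$, reducing everything to the vanishing $H^m(X,Y)=0$, which you obtain from the natural isomorphism $\check{H}^m(X,Y;G)\cong H^m_c(X\setminus Y;G)$ (valid for $Y$ closed in the compact Hausdorff $X$) together with the dimension-theoretic fact that compactly supported cohomology of a locally compact Hausdorff space vanishes above its covering dimension. This is a clean, conceptually transparent argument, but it invokes the compactly-supported-cohomology machinery and the general cohomological-dimension inequality, neither of which is developed in the paper. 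The paper instead stays within its own toolbox: it constructs a decreasing family of compact sets $C_\varepsilon\supset Y$ with $Y=\varprojlim C_\varepsilon$, covers the far-from-$Y$ part of $X$ by small open sets $U_i$ whose frontiers have dimension $\le m-2$, and then applies the Mayer--Vietoris sequence of the compact triad $(X,B_\varepsilon,C_\varepsilon)$ with $B_\varepsilon=\overline{\cup U_i}$; the relevant vanishing is that of $H^{m-1}(B_\varepsilon\cap C_\varepsilon)$, supplied by the elementary nerve-refinement Lemma~\ref{lem:17Apre}, and surjectivity onto $\tilde H^{m-1}(Y)$ follows by continuity of \v{C}ech cohomology. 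Note that your concern about ``relative cocycles on mixed simplices'' in a direct nerve argument is precisely the obstacle the paper avoids by passing to the absolute group $H^{m-1}(B_\varepsilon\cap C_\varepsilon)$ rather than a relative group; you avoid it in a different way, by going through $H^m_c$. Both proofs use only $\dim(X\setminus Y)\le m-1$; the hypothesis $\H^m(X\setminus A)<\infty$ plays no role in either.
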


		\begin{proof}
			The inclusion of \( A \) into \( X \) factors through \( Y \), so it suffices to show \( \iota(X,Y)^*:\tilde{H}^{m-1}(X)\to \tilde{H}^{m-1}(Y) \) is surjective. For \( \e>0 \), let \( X_\e=X\cap \cN(Y, \e) \). Since \( X\setminus X_\e \) is compact and contained in \( X\setminus Y \), we may cover \( X\setminus X_\e \) by a finite number of open subsets \( U_i \) of \( X \), \( i=1,\dots,n \), such that for each \( i \), \( U_i\subset \cN(p_i,\e/2) \) for some \( p_i\in X\setminus X_\e \), and \( \dim(\p U_i)\leq m-2 \). Define \( B_\e=\overline{\cup_{i=1}^N U_i} \) and \( C_\e=X\setminus (\cup_{i=1}^N U_i) \). Then \( B_\e \) and \( C_\e \) are compact, \( B_\e\subset X\setminus Y \), and \( Y\subset C_\e\subset X_\e \). Furthermore, since \( B_\e\cap C_\e=\p(B_\e)\subset \cup{i=1}^N \p U_i \), it follows from \cite{hurewicz} Theorem III 1 that \( \dim(B_\e\cap C_\e)\leq m-2 \). By Lemma \ref{lem:17Apre}, \( H^{m-1}(B_\e\cap C_\e)=0 \). The Mayer-Vietoris sequence applied to the compact triad \( (X, B_\e, C_\e) \) thus implies that \( \iota(X,C_\e)^*:H^{m-1}(X)\to H^{m-1}(C_\e) \) is surjective. Finally, since \( Y=\varprojlim C_\e \), the result follows from the continuity of \v{C}ech cohomology.
		\end{proof}
		
		In particular,
		
		\begin{cor}\label{cor:corespans}
			If \( (X,A) \) is compact and \( X \) is a surface with coboundary \( \supset L \) such that \( \H^m(X\setminus A)<\infty \), then \( (X\setminus A)^*\cup A \) is a surface with coboundary \( \supset L \),
		\end{cor}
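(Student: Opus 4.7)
The plan is to apply Lemma \ref{lem:corespans} with $Y=(X\setminus A)^*\cup A$. Let $W=X\setminus Y$, so that
\[ W=\{p\in X\setminus A : \H^m((X\setminus A)\cap B(p,r))=0 \text{ for some } r>0\}. \]
It suffices to check that $Y$ is compact and that $\dim(X\setminus Y)=\dim(W)\leq m-1$; the inclusion $(Y,A)\subset (X,A)$ is immediate from the definition.

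For compactness, I would show that $W$ is open in $X$. Since $A$ is compact, $X\setminus A$ is open in $X$. If $p\in W$ admits witness radius $r_p>0$, then for any $q\in B(p,r_p)\cap(X\setminus A)$ one can choose a smaller radius so that a neighborhood of $q$ sits inside $B(p,r_p)$, hence $q\in W$. Thus $W$ is open in $X\setminus A$, hence open in $X$, so $Y$ is a closed subset of the compact set $X$ and is compact.

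For the dimension bound, the witness balls $\{B(p,r_p)\}_{p\in W}$ cover $W\subset \R^n$; by the Lindel\"of property of $\R^n$ one can extract a countable subcover $\{B(p_i,r_i)\}$. Then
\[ \H^m(W)\leq \sum_i \H^m((X\setminus A)\cap B(p_i,r_i)) = 0. \]
Since $\H^m(X\setminus A)<\infty$ was assumed, this is a valid finite-additivity computation. Then \cite{hurewicz} Theorem VII 3 (the same result invoked in Lemma \ref{lem:17A}) yields $\dim(W)\leq m-1$. Applying Lemma \ref{lem:corespans} completes the proof.

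There is no real obstacle here; the only subtlety is verifying that $W$ is open (so that $Y$ is closed), which reduces to the elementary observation that the witness balls furnish a neighborhood basis of each of their points inside $W$. Everything else is a direct application of Lindel\"of and the Hurewicz dimension theorem, combined with Lemma \ref{lem:corespans}.
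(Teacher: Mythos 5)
Your proposal is correct and takes essentially the same approach as the paper: set $Y=(X\setminus A)^*\cup A$, observe $\H^m(X\setminus Y)=0$, invoke \cite{hurewicz} Theorem VII 3 to bound $\dim(X\setminus Y)\leq m-1$, and apply Lemma \ref{lem:corespans}. You supply details the paper leaves implicit (the Lindel\"of argument for $\H^m(W)=0$ and the openness of $W$ giving compactness of $Y$); the remark that finiteness of $\H^m(X\setminus A)$ is needed for the subadditivity step is superfluous, since $\H^m$ is an outer measure and therefore countably subadditive unconditionally.
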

		
		\begin{proof}
			Let \( Y=(X\setminus A)^*\cup A \). Since \( \H^m(X\setminus Y)=0 \), it follows from \cite{hurewicz} Theorem VII 3 that \( \dim(X\setminus Y) \leq m-1 \).
		\end{proof}
		
		and
		
		\begin{cor}\label{cor:bddbelow0}
			If \( (X,A) \) is compact and \( X \) is a surface with coboundary \( \supset L\neq \emptyset \), then \( \H^m(X\setminus A)>0 \).
		\end{cor}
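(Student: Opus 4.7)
The plan is to prove the contrapositive: assume $\H^m(X\setminus A) = 0$ and derive that $L$ must be empty, contradicting the hypothesis. The case $\H^m(X\setminus A) = \infty$ is vacuous, so I may assume $\H^m(X\setminus A) < \infty$, which puts us in the setting of the preceding Corollary \ref{cor:corespans}.

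The main step is to observe that under the assumption $\H^m(X\setminus A) = 0$, the set $(X\setminus A)^*$ is empty. This is immediate from the definition: no point $p \in X\setminus A$ can satisfy $\H^m((X\setminus A)\cap B(p,r)) > 0$ for every $r>0$ when the ambient set $X\setminus A$ already has $\H^m$-measure zero. Consequently, the set $Y := (X\setminus A)^* \cup A$ reduces to $A$ itself.

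Now I would apply Corollary \ref{cor:corespans}, whose hypotheses are satisfied, to conclude that $Y$ is a surface with coboundary $\supset L$. Since $Y = A$, this means $L \subset K^*(A,A)$. But Lemma \ref{lem:1A} asserts $K^*(A,A) = \emptyset$, so $L = \emptyset$, contradicting the hypothesis $L \neq \emptyset$. I do not expect any serious obstacle, since the statement is essentially a direct combination of Corollary \ref{cor:corespans} and Lemma \ref{lem:1A}; the only thing to verify carefully is that the definition of the starred set $(X\setminus A)^*$ given in the Notation section forces emptiness in the absence of positive measure.
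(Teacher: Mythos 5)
Your proof is correct and follows essentially the same route as the paper: the paper applies Lemma \ref{lem:corespans} directly with $Y=A$ (using \cite{hurewicz} Theorem VII 3 to verify $\dim(X\setminus A)\le m-1$ when $\H^m(X\setminus A)=0$), whereas you route through Corollary \ref{cor:corespans} and then observe that $(X\setminus A)^*$ vanishes, but both paths reduce to Lemma \ref{lem:corespans} followed by Lemma \ref{lem:1A}.
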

		\begin{proof}
			Suppose the result is false, and let \( Y=A \). Lemma \ref{lem:corespans} can be applied by \cite{hurewicz} Theorem VII 3. Lemma \ref{lem:1A} yields a contradiction.
		\end{proof}
		
		In Corollary \ref{cor:bddbelow}, we prove that \( \H^m(X\setminus A) \) cannot be arbitrarily small. 
		
	\subsection{Results specific to \( L^\mathcal{R} \)}\label{ssection:LR}

		\begin{thm}
			\label{thm:manifoldspans}
			Suppose \( A \) is an \( (m-1) \)-dimensional closed \( \mathcal{R} \)-orientable manifold and \( X \) is a compact \( \mathcal{R} \)-orientable manifold with boundary \( A \), then \( X \) is a surface with coboundary \( \supset L^\mathcal{R} \).
		\end{thm}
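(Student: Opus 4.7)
The goal is to show that $L^\mathcal{R}$ is disjoint from the image of $\iota(X,A)^*: \tilde H^{m-1}(X) \to \tilde H^{m-1}(A)$. The plan is to use the cohomology long exact sequence of the pair $(X,A)$, together with Lefschetz duality for the relative fundamental class, and reduce the statement to a non-degeneracy fact for the Kronecker pairing on the closed $\mathcal{R}$-orientable manifold $A_i$.

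First I would write down the long exact sequence
\[
\cdots \to \tilde H^{m-1}(X) \xrightarrow{\iota^*} \tilde H^{m-1}(A) \xrightarrow{\delta} H^m(X,A) \to \cdots
\]
(in the $m=1$ case this is the reduced version, obtained from the usual pair sequence by noting $H^0(X,A)=\tilde H^0(X,A)$ when $A\neq \emptyset$). By exactness, an element $\ell\in\tilde H^{m-1}(A)$ lies in the image of $\iota^*$ if and only if $\delta(\ell)=0$, so it suffices to verify $\delta(\ell)\neq 0$ for every $\ell\in L^\mathcal{R}$.

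Next, since $X$ is a compact $\mathcal{R}$-orientable $m$-manifold with boundary $A$, there is a relative fundamental class $[X,A]\in H_m(X,A;\mathcal{R})$ whose image under the homological connecting homomorphism $\partial:H_m(X,A)\to H_{m-1}(A)$ is the fundamental class $[A]=\sum_j [A_j]\in H_{m-1}(A;\mathcal{R})$, with $[A_j]$ the class of $A_j$ under the boundary orientation. By naturality of the Kronecker pairing with respect to the coboundary and boundary maps,
\[
\bigl\langle \delta\ell,\,[X,A]\bigr\rangle \;=\; \bigl\langle \ell,\,\partial[X,A]\bigr\rangle \;=\; \bigl\langle \ell,\,[A]\bigr\rangle .
\]
Now take $\ell\in L_i$, i.e.\ the image in $\tilde H^{m-1}(A)\simeq \oplus_j H^{m-1}(A_j)$ (respectively its projection to reduced cohomology when $m=1$) of an $\mathcal{R}$-module generator of $H^{m-1}(A_i)\simeq \mathcal{R}$. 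Since $\ell$ vanishes on every $A_j$ with $j\neq i$, the pairing reduces to $\langle \ell|_{A_i},[A_i]\rangle$. By Poincar\'e duality on the closed $\mathcal{R}$-orientable $(m-1)$-manifold $A_i$, the Kronecker pairing $H^{m-1}(A_i)\otimes H_{m-1}(A_i)\to \mathcal{R}$ is perfect, so a generator of the free rank-one module $H^{m-1}(A_i)$ pairs with $[A_i]$ to a unit of $\mathcal{R}$. Consequently $\langle \delta\ell,[X,A]\rangle\neq 0$, hence $\delta\ell\neq 0$, as required.

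The main step that requires care is the $m=1$ case, where one must verify that passing to reduced cohomology does not destroy the argument: the offending classes $L_i$ project non-trivially because they take different values on distinct points of $\partial X$, and the formula $\langle\delta\ell,[X,A]\rangle=\langle\ell,[A]\rangle$ is insensitive to adding a constant to $\ell$ (since $\partial[X,A]=[A]$ has augmentation zero for a compact $1$-manifold with boundary). Apart from this reduced-cohomology bookkeeping and standard sign conventions for the induced orientation on $\partial X$, the proof is a direct application of Lefschetz duality, so no essential obstacle is expected.
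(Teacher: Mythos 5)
Your proof is correct and is essentially the same argument as the paper's. You phrase the key step via the cohomology long exact sequence (showing $\delta\ell\neq 0$ by pairing $\delta\ell$ with the relative fundamental class $[X,A]$), while the paper phrases it via the homology long exact sequence (deriving a contradiction from $\iota_*\partial\eta=0$), but these are adjoint formulations of the same exactness-plus-Lefschetz-duality computation, and both reduce to the non-degenerate evaluation of a generator of $H^{m-1}(A_j;\mathcal{R})$ against the fundamental class $[A_j]$.
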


		\begin{proof}
			The result is obvious if \( m=1 \). Let \( m>1 \). Since \( A \) and \( X \) have the homotopy type of \( CW \)-complexes (see e.g. \cite{hatcher} Cor A.12,) we may treat the \v{C}ech cohomology groups involved in the definition of ``surface with coboundary'' as singular cohomology groups. We proceed by contradiction. Suppose there exists \( \phi\in L^\mathcal{R} \) with \( \iota(X,A)^*(\omega)=\phi \) for some \( \omega\in H^{m-1}(X;\cal{R}) \). Writing \( A=\cup A_i \) where the \( A_i \)'s are the connected components of \( A \), there exists \( j \) such that \( \iota(X,A_j)^*\omega \) is a generator of \( H^{m-1}(A_j;\cal{R}) \), and \( \iota(X,A_i)^*\omega=0 \) for all \( i\neq j \). 

			Let \( \eta\in H_m(X,A;\cal{R}) \) be a fundamental class for \( X \). Then \( \nu=\p\eta\in H_{m-1}(A;\cal{R}) \) is a fundamental class for \( A \) and by exactness, \( \iota(X,A)_*\nu=0 \). Write \( \nu=\sum \iota(A,A_i)_* \nu_i \). Then \( \nu_i \) is a fundamental class for \( A_i \) for each \( i \). We have
			\begin{align*}
				0&=\o(\iota(X,A)_*\nu)\\
				&=\iota(X,A)^*\o(\nu)\\
				&=\phi(\nu)\\
				&=\sum\phi\left(\iota(A,A_i)_*\nu_i\right)\\
				&=\sum \iota(A,A_i)^*\phi (\nu_i)\\
				&=\iota(A,A_j)^*\phi (\nu_j)\\
				&\neq 0
			\end{align*}
			where the last line follows from Poincar\'e duality.  
		\end{proof}

		By reducing mod 2, the universal coefficient theorem gives the following corollary:

		\begin{cor}
			\label{cor:nonorientablespans}
			If \( A \) is an \( (m-1) \)-dimensional closed orientable manifold and \( X \) is a compact manifold with boundary \( A \), then \( X \) is a surface with coboundary \( \supset L^\Z \).
		\end{cor}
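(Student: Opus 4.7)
The plan is to reduce the claim for $\Z$-coefficients to the already-proved Theorem \ref{thm:manifoldspans} applied with $\mathcal{R}=\Z/2$, by exploiting the naturality of the mod $2$ coefficient reduction. The key observations are: every topological manifold is $\Z/2$-orientable (in particular both $A$ and $X$ are), so Theorem \ref{thm:manifoldspans} does apply in $\Z/2$-coefficients to give that $X$ is a surface with coboundary $\supset L^{\Z/2}$; and the mod $2$ coefficient homomorphism $\rho\colon H^{m-1}(-;\Z)\to H^{m-1}(-;\Z/2)$ sends the generators $\pm 1$ of each $H^{m-1}(A_i;\Z)\cong\Z$ to the generator $1$ of $H^{m-1}(A_i;\Z/2)\cong\Z/2$, so that $\rho(L^\Z)\subset L^{\Z/2}$.

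First I would argue by contradiction: suppose $X$ is not a surface with coboundary $\supset L^\Z$. Then there exist $\phi\in L^\Z$ and $\omega\in \tilde{H}^{m-1}(X;\Z)$ with $\iota(X,A)^*(\omega)=\phi$. Next I would invoke naturality of the coefficient map in the commutative square
\begin{align*}
\xymatrix{
\tilde{H}^{m-1}(X;\Z)\ar[r]^{\iota(X,A)^*}\ar[d]_{\rho} & \tilde{H}^{m-1}(A;\Z)\ar[d]^{\rho}\\
\tilde{H}^{m-1}(X;\Z/2)\ar[r]^{\iota(X,A)^*} & \tilde{H}^{m-1}(A;\Z/2)
}
\end{align*}
to conclude $\iota(X,A)^*(\rho(\omega))=\rho(\phi)$, with $\rho(\phi)\in L^{\Z/2}$ and $\rho(\phi)\neq 0$ since $\phi$ is a generator of a $\Z$-summand.

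Finally I would apply Theorem \ref{thm:manifoldspans} with $\mathcal{R}=\Z/2$: since $A$ is $\Z/2$-orientable and $X$ is a compact $\Z/2$-orientable manifold with boundary $A$, the element $\rho(\phi)\in L^{\Z/2}$ cannot lie in the image of $\iota(X,A)^*$ with $\Z/2$-coefficients, contradicting the preceding paragraph. Where the paper references the universal coefficient theorem, one may alternatively extract the square above directly from the UCT naturality: the mod $2$ reduction comes from the short exact sequence $0\to\Z\xrightarrow{\cdot 2}\Z\to\Z/2\to 0$ of coefficients and the induced Bockstein long exact sequence is natural under the inclusion $\iota(X,A)$.

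The only subtle point is the verification that $\rho(\phi)$ is actually nonzero and lies in $L^{\Z/2}$; this is immediate from how $L^\Z$ is constructed componentwise in \S\ref{sub:cohomological} together with the fact that the generators of $\Z$ reduce to the generator of $\Z/2$, so no real obstacle arises. The proof is thus a short naturality argument reducing the $\Z$-case to the $\Z/2$-case, where $\Z/2$-orientability is automatic.
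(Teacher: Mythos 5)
Your proof is correct and follows the same overall strategy as the paper's: everything reduces to the $\Z/2$ case via Theorem~\ref{thm:manifoldspans}, which applies because every topological manifold is $\Z/2$-orientable. Where you diverge is in the execution. You take the mod~$2$ reduction $\rho(\omega)\in H^{m-1}(X;\Z/2)$ of the offending integral class directly and invoke naturality of the coefficient homomorphism $\rho$ under $\iota(X,A)$ to get $\iota(X,A)^*\rho(\omega)=\rho(\phi)$. The paper instead first passes $\omega$ through the universal-coefficient evaluation map to obtain $f_\omega\in\mathrm{Hom}(H_{m-1}(X;\Z),\Z)$, reduces that mod~$2$, and then lifts $\tilde f_\omega$ back up to a (non-canonical) class $\tilde\omega\in H^{m-1}(X;\Z/2)$ using surjectivity of the UCT evaluation map; the subsequent tensor--hom/naturality computation in the paper is precisely the work needed to check that an arbitrary such lift restricts correctly to each $A_i$, which your choice $\tilde\omega=\rho(\omega)$ makes automatic by naturality of $\rho$. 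Both versions hinge on the same final fact, which you correctly flag as the only subtle point: the generator of $H^{m-1}(A_j;\Z)\cong\Z$ reduces to the generator of $H^{m-1}(A_j;\Z/2)\cong\Z/2$, so $\rho$ sends $L^\Z$ into $L^{\Z/2}$ without hitting zero. Your route is shorter and avoids the lifting step, so it is a genuine (if modest) simplification of the paper's argument.
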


		\begin{proof}
			It follows from Theorem \ref{thm:manifoldspans} that \( X \) is a surface with coboundary \( \supset L^{\Z/2\Z} \). Suppose there exists \( \omega\in H^{m-1}(X;\Z) \) and \( j \) such that \( \iota(X,A_j)^*\omega \) is a generator of \( H^{m-1}(A_j;\Z) \), and \( \iota(X,A_i)^*\omega=0 \) for all \( i\neq j \). The cohomology class \( \omega \) gives a homomorphism \( f_\o: H_{m-1}(X;\Z)\to \Z \), and by composing with the reduction map \( \Z \to \Z/2\Z \), a homomorphism \( \tilde{f}_\o:H_{m-1}(X;\Z)\to \Z/2\Z \). Since \[ H^{m-1}(X;\Z/2\Z)\to \mathrm{Hom}(H_{m-1}(X;\Z),\Z/2\Z)\to 0 \] is exact, the map \( \tilde{f}_\o \) lifts to a cohomology class \( \tilde{\o}\in H^{m-1}(X;\Z/2\Z) \). Since \( [A_i]_{\Z/2\Z} \), the \( \Z/2\Z \) fundamental class of \( A_i \), is the reduction mod-\( 2 \) of the \( \Z \) fundamental class of \( A_i \), denoted by  \( [A_i]_\Z \), it follows from naturality of the universal coefficient theorem and the tensor-hom adjunction that
			\begin{align*}
				\iota(X,A_i)^*\tilde{\o}\left([A_i]_{\Z/2\Z}\right)&=\iota(X,A_i)^*\tilde{f}_\o\left([A_i]_\Z\right)\\
				&=\tilde{f}_\o\left(\iota(X,A_i)_*[A_i]_\Z\right)\\
				&=\o\left(\iota(X,A_i)_*[A_i]_\Z\right)\textrm{ mod }2,
			\end{align*}
			which equals \( 1 \) if \( i=j \) and \( 0 \) otherwise. In other words, \( \iota(X,A)^*\tilde{\o}\in L^{\Z/2\Z} \), giving a contradiction.
		\end{proof}

		\begin{thm}
			\label{thm:flatspans}
			Suppose \( A \) is an \( (m-1) \)-dimensional closed orientable manifold. Suppose \( X \) is a compact set which can be written in the form \( X=A\cup\cup_i X_i \), where \( \cup_i X_i \) is a increasing union of a sequence \( \{X_i\} \) of compact manifolds with boundary, such that for each \( i \), \( \p X_i\cup A \) is the boundary of a compact manifold with boundary \( B_i \), and such that \( B_i\to A \) in the Hausdorff metric. Then \( X \) is a surface with coboundary \( \supset L^\Z \).
		\end{thm}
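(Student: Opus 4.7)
The plan is to realize \( X \) as a Hausdorff limit of compact surfaces with coboundary \( \supset L^\Z \), each obtained from the gluing construction implicit in the hypotheses, and then invoke Lemma \ref{lem:21C}. The key observation is that the data \( (X_i, B_i) \) packages together two manifolds that share \( \partial X_i \) as a common boundary component and together exhibit \( A \) as the total boundary of a compact (not necessarily orientable) manifold.

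First, for each \( i \) I would form the compact manifold \( N_i := X_i \cup_{\partial X_i} B_i \). Since \( \partial B_i = \partial X_i \cup A \), gluing \( X_i \) to \( B_i \) along the common boundary piece \( \partial X_i \) produces a compact topological manifold whose boundary is precisely \( A \). Corollary \ref{cor:nonorientablespans} (applied to \( N_i \), which has an orientable \( (m-1) \)-dimensional boundary, regardless of whether \( N_i \) itself is orientable) then gives that \( N_i \) is a surface with coboundary \( \supset L^\Z \). Set-theoretically inside \( \R^n \), the underlying point set of \( N_i \) is \( X_i \cup B_i \); if the ambient embedding introduces additional identifications, one pushes the coboundary condition forward to \( X_i \cup B_i \subset \R^n \) via Lemma \ref{lem:6A} using the continuous surjection from the abstract glued manifold which restricts to the identity on \( A \) (so the pullback of \( L^\Z \) is again \( L^\Z \)).

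Second, the inclusion \( X_i \cup B_i \subset X \cup B_i \) together with Lemma \ref{lem:7A} promotes this to: for every \( i \), the compact set \( X \cup B_i \) is a surface with coboundary \( \supset L^\Z \). Third, since \( B_i \to A \) in the Hausdorff metric and \( A \subset X \), we have \( X \cup B_i \to X \) in the Hausdorff metric. Because \( X \) is compact, Lemma \ref{lem:21C} then yields that \( X \) is a surface with coboundary \( \supset L^\Z \), as desired.

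The only delicate step is the first one: verifying that the abstract gluing \( X_i \cup_{\partial X_i} B_i \) is a genuine compact manifold with boundary \( A \), so that Corollary \ref{cor:nonorientablespans} applies. This reduces to the standard topological fact that two manifolds with boundary glued along a common boundary component produce a manifold with boundary equal to the remaining boundary. The rest is a transparent combination of monotonicity (Lemma \ref{lem:7A}) and Hausdorff continuity (Lemma \ref{lem:21C}) already developed in \S\ref{sub:reifenberg_appendix}.
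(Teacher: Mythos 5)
Your proof is correct and follows essentially the same route as the paper's: glue \( X_i \) and \( B_i \) along \( \partial X_i \) to get an abstract compact manifold with boundary \( A \), apply Corollary \ref{cor:nonorientablespans}, push forward to the subset \( X_i \cup B_i \subset \R^n \) via Lemma \ref{lem:6A}, enlarge to \( X \cup B_i \) via Lemma \ref{lem:7A}, and pass to the limit. The only cosmetic difference is that you invoke Lemma \ref{lem:21C} on the Hausdorff-convergent sequence \( X \cup B_i \to X \), whereas the paper applies Lemma \ref{lem:21B} directly to the inverse system \( C_N = X \cup \cup_{i\geq N} B_i \); since \ref{lem:21C} is itself derived from \ref{lem:21B} by exactly this construction, the two are equivalent.
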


		\begin{proof}
			Writing \( C_N=X\cup \cup_{i=N}^\infty B_i \), it suffices to show, by Lemma \ref{lem:21B}, that for all \( N \), the set \( C_N \) is a surface with coboundary \( \supset L^\Z \). Since \( A\subset X_N\cup B_N\subset C_N \), it suffices to show, by Lemma \ref{lem:7A}, that \( X_N\cup B_N \) is a surface with coboundary \( \supset L^\Z \). Indeed it is, since the compact manifold formed by gluing \( X_N \) and \( B_N \) along their common boundary \( \p X_N \) is a manifold with boundary \( A \), and is thus a surface with coboundary \( \supset L^\Z \) by Corollary \ref{cor:nonorientablespans}. The set \( X_N\cup B_N \) is the continuous image of this manifold, and therefore is a surface with coboundary \( \supset L^\Z \) by Lemma \ref{lem:6A}.
		\end{proof}

		\begin{thm}
			\label{prop:unionspans}	
			Suppose \( A=A_1\cup\cdots\cup A_k \), where \( A_1,\dots, A_k \) are \( (m-1) \)-dimensional closed \( \mathcal{R} \)-orientable manifolds. Suppose that every non-empty intersection of the \( A_i \)'s is also a \( (m-1) \)-dimensional closed manifold, or equivalently that every component of \( A \) is contained in some \( A_i \). Then \( A \) is a \( \mathcal{R} \)-orientable closed manifold, and if \( X_i \) is a surface with coboundary \( \supset L^\mathcal{R}(A_i) \), \( i=1,\dots k \), then \( X=\cup_i X_i \) is a surface with coboundary \( \supset L^\mathcal{R}(A) \).
		\end{thm}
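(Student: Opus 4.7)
The argument rests on a topological observation about the component structure of $A$: under the hypothesis that every non-empty intersection $A_i\cap A_j$ is an $(m-1)$-dimensional closed manifold, invariance of domain forces each such intersection to be both open and closed in $A_i$ and in $A_j$, hence a union of components of both. I plan to leverage this to show that the set of components of $A$ is precisely the union of the sets of components of the $A_i$'s: each component $C$ of $A$ is a component of every $A_i$ that contains a point of $C$ (because the component of $A_i$ through such a point is clopen in $A$, hence equals $C$), and conversely each component of $A_i$ is connected and clopen in $A$, hence is a component of $A$. The $\mathcal{R}$-orientability of $A$ is then immediate, since each component inherits an orientation from any $A_i$ containing it.

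For the cohomological claim, I proceed by contradiction. Suppose $\o\in\tilde H^{m-1}(X)$ satisfies $\iota(X,A)^*\o\in L^{\mathcal R}(A)$. Working in the case $m>1$ first, the direct sum decomposition $H^{m-1}(A)\simeq\bigoplus_C H^{m-1}(C)$ over components $C$ of $A$ (via E-S Ch. I Thm. 13.2c, as in Lemma \ref{lem:3A}) produces a distinguished component $C_j$ such that $\iota(X,C_j)^*\o$ is an $\mathcal R$-module generator of $H^{m-1}(C_j)\simeq\mathcal R$ while $\iota(X,C)^*\o=0$ for every other component $C$ of $A$. Choose $i$ so that $C_j\subset A_i$; by the component analysis above, $C_j$ is a component of $A_i$, and every other component of $A_i$ is also a component of $A$.

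Now set $\o_{A_i}:=\iota(X,A_i)^*\o\in H^{m-1}(A_i)$. Under the decomposition $H^{m-1}(A_i)\simeq\bigoplus_D H^{m-1}(D)$ over components $D$ of $A_i$, $\o_{A_i}$ restricts to $\iota(X,D)^*\o$ on each $D$, which is a generator when $D=C_j$ and zero otherwise. In other words $\o_{A_i}\in L^{\mathcal R}(A_i)$. Since $X_i\supset A_i$ and $X\supset X_i$, the map $\iota(X,A_i)$ factors as $\iota(X_i,A_i)\circ\iota(X,X_i)$, so $\o_{A_i}$ lies in the image of $\iota(X_i,A_i)^*$. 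This contradicts the hypothesis that $X_i$ is a surface with coboundary $\supset L^{\mathcal R}(A_i)$.

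The case $m=1$ is handled by the same argument after lifting to unreduced cohomology: an element of $L^{\mathcal R}(A)\subset\tilde H^0(A)$ is represented by the class in $H^0(A)$ of a locally constant function taking a generator value on one component $C_j$ and zero elsewhere. Any lift of $\o$ to $H^0(X)$ gives a representative of $\iota(X,A)^*\o$ with this property up to a global constant, and restriction to $A_i$ absorbs the constant into an element of $L^{\mathcal R}(A_i)$, producing the same contradiction. The main potential obstacle is the component-structure verification in step one; once that is in place, the cohomological argument is a mechanical functoriality chase and poses no further difficulty.
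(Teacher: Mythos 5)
Your proposal is correct and takes essentially the same approach as the paper: both rely on invariance of domain to identify the components of $A$ with the union of the components of the $A_i$'s, after which the cohomological claim follows by chasing the direct-sum decomposition $H^{m-1}(A)\simeq\oplus_C H^{m-1}(C)$. The paper compresses the final step to ``The result follows,'' while you carry out the functoriality chase explicitly (and supply a sketch for $m=1$); the underlying argument is the same.
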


		\begin{proof}
			The equivalence of the assumptions in the second sentence is a consequence of Brouwer's invariance of domain theorem \cite{brouwer}. Then \( A \), being the disjoint union of its connected components, is a \( \mathcal{R} \)-orientable closed manifold. Moreover, every component of \( A_i \) is a component of \( A \), and every component of \( A \) is a component of \( A_i \) for some \( i \). The result follows.
		\end{proof}
	
\section{Hausdorff spherical measure}
\label{sec:hausdorff_spherical_measure}
	
	Lemma \ref{lem:4} is a sharp version of the Eilenberg inequality \cite{eilenberg} for Hausdorff spherical measure, and is a generalization of \cite{reifenberg} Lemma 4.
	
	\begin{lem}[Slicing inequality]
		\label{lem:4}
		Suppose \( X\subset \R^n \) is \( \sp^m \) measurable with \( \sp^m(X)<\i \), and \( f:X \to \R \) is \( \sp^m \) measurable, non-negative and bounded. If \( X_t \) denotes the set of points of \( X \) at distance \( t \) from a fixed \( M \)-dimensional affine subspace \( E \) of \( \R^n \), where \( 0 \leq M \leq n-1 \), then \[ \int_0^\i \int_{X_t}f(q) d\sp^{m-1}(q) dt \le \int_X f(q) d\sp^m(q). \]
	\end{lem}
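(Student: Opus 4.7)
The plan is to reduce to the indicator-function case by approximation, then prove a sharp per-ball slicing estimate. By linearity and monotone convergence applied to simple functions approximating \( f \) from below, it suffices to show that \( \int_0^\infty \sp^{m-1}(Y_t)\, dt \leq \sp^m(Y) \) holds for every \( \sp^m \)-measurable \( Y \subset X \), where \( Y_t := Y \cap \{d_E = t\} \) and \( d_E \) denotes the distance to \( E \).

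Fix \( \delta, \varepsilon > 0 \) and cover \( Y \) by balls \( \{B_i = B(p_i, r_i)\} \) with \( 2r_i < \delta \) and \( \sum_i \a_m r_i^m < \sp^m_\delta(Y) + \varepsilon \). Since \( Y_t \subset \bigcup_i B_i \cap \{d_E = t\} \), the key reduction is per-ball: for each \( B = B(p,r) \), one seeks a cover \( \{B(q_j(t), s_j(t))\} \) of \( B \cap \{d_E = t\} \) such that \( \int_0^\infty \sum_j \a_{m-1}\, s_j(t)^{m-1}\, dt \leq \a_m r^m \). When \( E \) is a hyperplane, the slice is an \( (n-1) \)-dimensional Euclidean disk of radius \( \sqrt{r^2 - (t - d_E(p))^2} \), so the single-ball cover together with the classical Beta identity \( \int_{-r}^{r} \a_{m-1}(r^2 - s^2)^{(m-1)/2}\, ds = \a_m r^m \) gives equality.

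The main obstacle is the general codimension-\( k \) case (\( k = n - M > 1 \)). The slice \( B \cap \{d_E = t\} \) then lies on a sphere \( S^{k-1}_t \subset E^\perp \) crossed with a ball in \( E \), and no single ball of radius smaller than \( r \) covers it; the naive one-ball cover only yields an Eilenberg-type constant \( 2\a_{m-1}/\a_m \), which exceeds one as soon as \( m \geq 2 \). To recover constant one I would adapt the cover to the orthogonal splitting \( \R^n = E \oplus E^\perp \): partition the admissible portion of \( S^{k-1}_t \cap B(\pi_{E^\perp}(p), r) \subset E^\perp \) into small spherical caps, assign to each cap a cover ball in \( \R^n \) whose radius is dictated by the \( E \)-width \( \sqrt{r^2 - |u_{E^\perp}|^2} \) at the cap's centre, and then integrate in \( t \) via polar coordinates on \( E^\perp \). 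After the angular factor cancels against the radial Jacobian \( t^{k-1} \), the per-ball bound should reduce to the codimension-one Beta identity above.

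Summing these per-ball estimates over \( i \) gives \( \int_0^\infty \sp^{m-1}_{2\delta}(Y_t)\, dt \leq \sp^m_\delta(Y) + \varepsilon \). Letting \( \delta \to 0 \) and \( \varepsilon \to 0 \), and invoking monotone convergence on the left-hand side (since \( \sp^{m-1}_{2\delta}(Y_t) \uparrow \sp^{m-1}(Y_t) \) for every \( t \) as \( \delta \to 0 \)) and on the right (since \( \sp^m_\delta(Y) \uparrow \sp^m(Y) \)), completes the proof of the slicing inequality and hence of the lemma.
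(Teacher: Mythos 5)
Your reduction to the indicator-function case via simple functions and monotone convergence is a reasonable alternative to the paper's Lusin-plus-Vitali framework, though you should spell out the measurability of \( t \mapsto \sp^{m-1}(Y_t) \) for measurable \( Y \) (the paper derives it from Federer 2.10.26 and notes the proof transfers to spherical measure). The genuine gap is exactly where you flag it: the per-ball estimate \( \int_0^\infty \sp^{m-1}_{2\delta}\bigl(B(p,r)\cap\{d_E=t\}\bigr)\,dt \le \alpha_m r^m \) in codimension \( k=n-M>1 \). You correctly observe that a naive enclosing-ball cover only yields the Eilenberg-type constant \( 2\alpha_{m-1}/\alpha_m>1 \) for \( m\ge 2 \), and you propose decomposing \( S^{k-1}_t\subset E^\perp \) into caps with adapted cover radii, but this is a sketch rather than a proof. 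The claim that the angular factor cancels against the Jacobian \( t^{k-1} \) and that everything ``should reduce to the codimension-one Beta identity'' is precisely the hard content of the lemma in its hardest case, and nothing you have written establishes it: the slice is a warped-product piece of \( E\times S^{k-1}_t \) with \( E \)-fibers of varying radius, and covering it by small balls incurs both a surplus from overlaps and a deficit from the curvature of \( S^{k-1}_t \), with no a priori reason these effects balance to constant one.

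The paper does not reprove this. It invokes Reifenberg's Lemma 4 (the set version of the slicing inequality for spherical measure, valid in arbitrary codimension) exactly at this point, and builds the variable-density version by Lusin's theorem plus a disjoint Vitali-type covering on which \( f \) is nearly constant, paying only \( O(\varepsilon) \) errors. Your outer framework is essentially interchangeable with the paper's, but the inner kernel must either be cited or proved in full, and your proposal does neither; as written, the argument is incomplete precisely at its crux.
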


	\begin{proof}
		We first show that \( f\lfloor_{X_t} \) is \( \sp^{m-1} \) measurable for almost every \( t \) and that \( g(t) := \int_{X_t}f(q)d\sp^{m-1}(q) \) is measurable. If \( f \) is a simple function, this follows from \cite{federer} 2.10.26 (the statement is for Hausdorff measure, but the proof equally applies to spherical measure.) Since \( f \) is a pointwise limit of an increasing sequence \( \{f_i\} \) of simple functions, \( f\lfloor_{X_t} \) is measurable for almost every \( t \) (namely, the points \( t \) for which \( f_i\lfloor_{X_t} \) is measurable for all \( i \).)  It follows from the monotone convergence theorem that \( g \) is the pointwise supremum of a sequence of measurable functions and hence is measurable.

		Let \( \e > 0 \) and produce from Lusin's theorem a closed subset \( Y\subset X \) such that \( \sp^m(X\setminus Y)<\e \) and \( f\lfloor_Y \) is continuous. For each \( y\in Y \), let \( \d_y \) be small enough so that \( |f(z)-f(y)|<\e \) for all \( z\in B(y,\d_y) \). Let \( \cal{V}_\e = \{B(p_i,r_i)\}_{i\in I} \) be a covering of \( \sp^m \) almost all \( Y \) by disjoint balls of radius \( r_i < \d_{p_i} \). If \( V_\e \) denotes the union of the balls in \( \cal{V}_\e \) and \( \kappa<\i \) is an upper bound for \( f \), then by \cite{reifenberg} Lemma 4,
		\begin{align*}
			\int_0^\i \int_{X_t}f(q)\, d\sp^{m-1}(q) \,dt &= \sum_{i\in I}\int_0^\i \int_{X(p_i,r_i)_t} f(q)\, d\sp^{m-1}(q) \,dt + \int_0^\i \int_{(X\setminus V_\e)_t} f(q)\, d\sp^{m-1}(q) \,dt\\
			&\leq \sum_{i\in I}(f(p_i)+ \e) \int_0^\i \sp^{m-1}(X(p_i,r_i)_t) \,dt + \kappa \int_0^\i \sp^{m-1}((X\setminus V_\e)_t) \,dt\\
			&\leq \sum_{i\in I}(f(p_i)+ \e) \sp^m(X(p_i,r_i)) + \kappa \e \\
			&\leq \sum_{i\in I} \int_{X(p_i,r_i)} (f(q) + 2\e)\, d\sp^m(q) + \kappa \e\\
			&\leq \int_X f(q) d\sp^m(q) + 2\e \sp^m(X) + \kappa \e.
		\end{align*}
	 \end{proof}

	One can replace the sets \( X_h \) in Lemma \ref{lem:4} with the level sets of a suitably regular function. It is enough that the level sets smoothly foliate \( \R^n \) except possibly a set of zero \( (m-1) \)-dimensional Hausdorff measure.

	The following is a cohomological version of \cite{reifenberg} Lemma 12.
	\begin{lem}
		\label{lem:12}
		Let \( p \in \R^n \) and let \( E \) be an affine \( m \)-plane containing \( p \). Fix \( 0 < \e < 1/2 \), \( r>0 \), and suppose \( A \subset (\fr\,B(p,r))\cap \cal{N}(E,\e r) \) is compact. If \( L\subset \tilde{H}^{m-1}(A)\setminus \{0\} \) and \( X \) is a surface with coboundary \( \supset L \), then there exists a compact set \( Y\subset (\fr\, B(p,r)) \cap \cal{N}(E,\e r) \) such that
		\begin{equation}
			\label{eq:12B}
			\sp^m(Y) \leq \e r\,\frac{2^{2m}\a_m}{\a_{m-1}}\,\sp^{m-1}(A)
		\end{equation}
		and either 
		\begin{equation}
			\label{eq:12A}
			\sp^m(X\cup Y) \geq \a_m r^m
		\end{equation}
		and the orthogonal projection of \( X \) onto \( E \) contains \( E\cap B(p,(1-\e)r) \), or \( Y \) is a surface with coboundary \( \supset L \). Moreover, if \( A \) is \( (m-1) \)-rectifiable, then \( Y \) is \( m \)-rectifiable.
	\end{lem}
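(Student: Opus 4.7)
The plan is to adapt Reifenberg's Lemma 12 \cite{reifenberg} to our cohomological setting, using Lemma \ref{lem:6A} in place of its homological analogue. Let $\pi:\R^n\to E$ denote orthogonal projection. For any $x\in \fr B(p,r)\cap \cN(E,\e r)$ the identity $|\pi(x)-p|^2 = r^2 - \mathrm{dist}(x,E)^2 > r^2(1-\e^2)$ shows that $\pi$ restricts to a two-sheeted cover of the band over the annular region $E\cap(B(p,r)\setminus\overline{B(p,r\sqrt{1-\e^2})})$, with smooth sheet lifts $\psi^\pm$. I would then dichotomize depending on whether $\pi(X)\supset E\cap B(p,(1-\e)r)$.

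In the covering case, take $Y$ to be a thickening within the band of $\psi^+(\pi(A))$ whose $m$-measure matches the right hand side of (\ref{eq:12B}). A direct computation of the sphere area element via $dA_{\mathrm{sph}} = r(r^2-|q-p|^2)^{-1/2}\,dq_E$, combined with the $1$-Lipschitz control $\sp^{m-1}(\pi(A))\leq \sp^{m-1}(A)$, yields (\ref{eq:12B}) with the stated constant. For (\ref{eq:12A}), combine $\sp^m(X)\geq \sp^m(\pi(X))\geq \a_m((1-\e)r)^m$ with the additive contribution of $Y$ on the band computed in the same parameterization, reducing to the elementary inequality $(1-\e)^m + m\int_{\pi/2-\arcsin\e}^{\pi/2}\sin^{m-1}\phi\,d\phi\geq 1$, which holds at $\e=0$ with equality and is monotonic in $\e$ by direct differentiation.

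Otherwise, pick $q_0\in E\cap B(p,(1-\e)r)\setminus\pi(X)$ and let $\ell=\pi^{-1}(q_0)$, an $(n-m)$-plane. I would construct a continuous retraction $R:\R^n\setminus\ell\to\fr B(p,r)\cap\cN(E,\e r)$ that is the identity on $A$, built fiberwise over $E\setminus\{q_0\}$: first push $\pi(x)$ radially away from $q_0$ inside $E$ until it meets the equator $\fr B(p,r)\cap E$, then lift along $E^\perp$ to the appropriate sheet of the band, arranged so as to fix $A$ pointwise. Since $q_0\notin\pi(X)$, $R$ is defined and continuous on $X$; set $Y:=R(X)$. Lemma \ref{lem:6A} applied to $R:(X,A)\to(Y,A)$ with $L_A=L_B=L$ shows $Y$ is a surface with coboundary $\supset L$. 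For (\ref{eq:12B}), apply the slicing inequality Lemma \ref{lem:4} to $Y$ along the function $x\mapsto\mathrm{dist}(x,E)$: each $t$-slice $Y_t$ is the $R$-image of a slice whose $(m-1)$-measure is bounded by a dimensional constant times $\sp^{m-1}(A)$ via the Jacobian of the planar radial projection from $q_0$, and integrating in $t\in(0,\e r)$ yields the constant $2^{2m}\a_m/\a_{m-1}$.

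The principal obstacle will be the careful construction of the retraction $R$ in the non-covering case so that it simultaneously fixes $A$, is continuous on $X$, lands in the thin band, and has a tractable Jacobian for the slicing estimate. The final $m$-rectifiability assertion for $Y$ when $A$ is $(m-1)$-rectifiable will follow from the explicit structure of these maps: $\psi^\pm$ and $R$ are locally Lipschitz on their domains of definition, so $Y$ is contained in a finite union of Lipschitz images of products of $(m-1)$-rectifiable sets with short intervals in $E^\perp$.
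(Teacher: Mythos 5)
Your geometric dichotomy (whether $\pi(X)$ covers $E\cap B(p,(1-\e)r)$) does not match the structure of the lemma, and both branches of your argument contain gaps. In the covering case, your $Y$ is a thickening of $\psi^+(\pi(A))$ with $\sp^m(Y)$ bounded by the right side of \eqref{eq:12B}, hence proportional to $\sp^{m-1}(A)$, which can be arbitrarily small (a tiny $(m-1)$-sphere $A$ in the band still has $\tilde{H}^{m-1}(A)\neq 0$). Then $\sp^m(X\cup Y)\geq \a_m((1-\e)r)^m$ plus a quantity proportional to $\sp^{m-1}(A)$ need not reach $\a_m r^m$, so \eqref{eq:12A} can fail. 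The inequality you cite compares the inner cap to the full annular band, but your $Y$ is not the band; it is an arbitrarily small portion of it. In the non-covering case, the retraction $R$ as described cannot fix $A$ pointwise, because $\pi(A)$ lies in the annulus $E\cap(B(p,r)\setminus B(p,r\sqrt{1-\e^2}))$ and pushing $\pi(A)$ radially to the equator moves every point of $A$. More fundamentally, you set $Y=R(X)$, and then \eqref{eq:12B} requires $\sp^m(R(X))$ to be controlled by $\e r\,\sp^{m-1}(A)$; but $X$ is arbitrary, the band $\fr\,B(p,r)\cap\cN(E,\e r)$ is $(n-1)$-dimensional, and nothing bounds $\sp^m(R(X))$ in terms of data about $A$. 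Your slicing estimate presupposes the very control it is supposed to establish.

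The paper sidesteps both difficulties by constructing a single $Y$ from $A$ alone: $Y$ is the radial projection onto $\fr\,B(p,r)$ of $\cup_{x\in A}I_x$, the union of segments joining each $x\in A$ to its orthogonal projection in $E$, so \eqref{eq:12B} follows directly from Lemma 6 of \cite{reifenberg}. The dichotomy is whether the set $L'\subset\tilde{H}^{m-1}(A')\setminus\{0\}$ produced by Lemma \ref{lem:10A} is empty (where $A'$ is the radial projection of $\pi(A)$). If $L'=\emptyset$, Lemma \ref{lem:11A} makes $Y$ a surface with coboundary $\supset L$. If $L'\neq\emptyset$, Alexander duality forces $A'=E\cap\fr\,B(p,r)$, Lemma \ref{lem:11A} shows $X\cup Y$ is a surface with coboundary $\supset L'$, and Lemmas \ref{lem:6A} and \ref{lem:8A} then force $\pi(X\cup Y)$ to contain all of $E\cap B(p,r)$ — yielding \eqref{eq:12A} and the projection claim cohomologically, with no lower bound on $\sp^{m-1}(A)$ needed.
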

	
	\begin{proof}
		Let \( A' \) denote the radial projection from \( p \) onto \( \fr\,B(p,r) \) of the orthogonal projection of \( A \) onto \( E \). Let \( B=A\cup A' \) and let \( Y \) be the radial projection onto \( \fr \, B(p,r) \) of the set \( \cup_{x\in A} I_x, \) where \( I_x \) denotes the line segment joining \( x \) to its orthogonal projection in \( E \). It is straightforward to see that \( Y \) is \( m \)-rectifiable whenever \( A \) is \( (m-1) \)-rectifiable.
			
		By Lemma 6 of \cite{reifenberg},
		\begin{equation}
			\label{eq:12(ii)}
			\sp^m(Y) \le \e r \, \frac{2^m\a_m}{\a_{m-1}}\, \sp^{m-1}(A)\cdot\left(\frac{r}{(1-\e)r}\right)^m\leq \e r\,\frac{2^{2m}\a_m}{\a_{m-1}}\,\sp^{m-1}(A).
		\end{equation}

		By Lemma \ref{lem:10A} there exists a subset \( L' \subset \tilde{H}^{m-1}(A')\setminus \{0\} \) such that
		\begin{equation}
			\label{eq:12(i)}
			K^*(Y,B) \cup (\iota(B,A)^*)^{-1}(L) = K^*(Y,B) \cup (\iota(B,A')^*)^{-1}(L').
		\end{equation}

		If \( L' = \emptyset \), then \eqref{eq:12(i)} gives \( (\iota(B,A)^*)^{-1}(L) \subset K^*(Y,B) \). It follows from Lemma \ref{lem:11A} that \( Y \) is a surface with coboundary \( \supset L \).
			
		Now suppose \( L'\ne \emptyset \). It suffices to show that the orthogonal projection of \( X\cup Y \) onto \( E \) contains \( E\cap B(p,r) \). We know by Alexander duality that \( A'=(\fr\,B(p,r))\cap E \) (otherwise \( \tilde{H}^{m-1}(A')=0 \), contradicting \( L'\ne\emptyset \).) Therefore, by Lemmas \ref{lem:8A} and \ref{lem:6A}, it suffices to show that \( X\cup Y \) is a surface with coboundary \( \supset L' \). This follows from Lemma \ref{lem:11A} and \eqref{eq:12(i)}.

			\begin{figure}[htbp]
						\centering
						\includegraphics[height=2in]{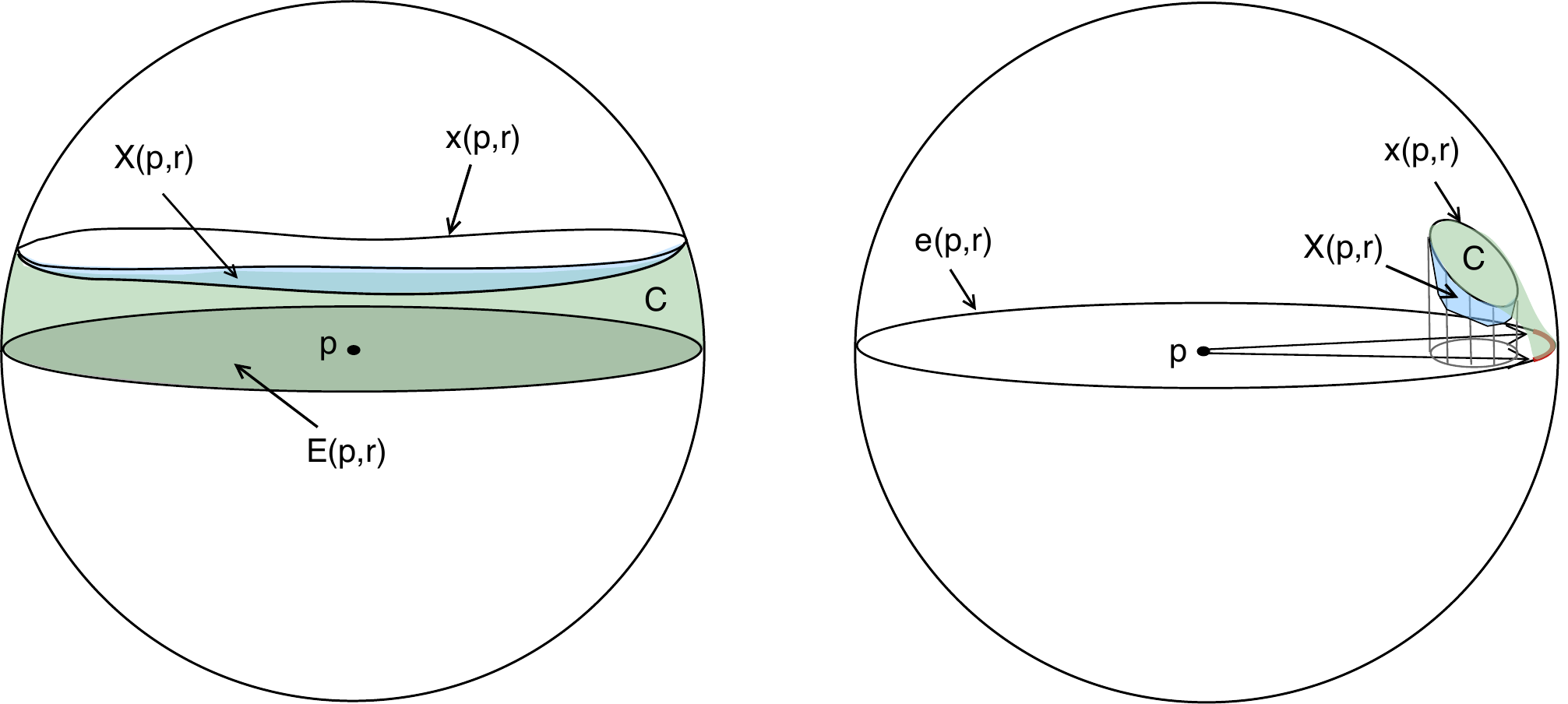}
						\caption{Two cases for Lemma \ref{lem:12}}
						\label{fig:Lemma12}
					\end{figure}
	\end{proof}

\section{Isoperimetry}
\label{section:isoperimetry}

	The goal of this section is to prove that if \( U \) is an open set disjoint from \( A \), and \( C\subset U \) is compact, then there exists a constant \( K_{C,U}>0 \) such that if \( X \) is a compact surface with coboundary \( \supset L \) such that \( \H^m(X\cap U)<K_{C,U} \), then there exists a compact surface \( Y \) with coboundary \( \supset L \) disjoint from \( C \). As a corollary, this will imply that if \( L\neq \emptyset \), then the \( m \)-dimensional Hausdorff measure of a compact surface with coboundary \( \supset L \) cannot be arbitrarily small.

	\begin{lem}
		\label{lem:8}
		Suppose \( m\geq 2 \). There exists a constant \( 0<K_m^n <\infty \) such that if \( A \) is compact, then there exists a compact surface \( X \) with coboundary \( \supset H^{m-1}(A) \setminus \{0\} \) such that
		\begin{enumerate}
			\item\label{lem:8:item:2} \( X \) is contained in the convex hull of \( A \);
			\item\label{lem:8:item:3} \( X \subset \cN(A, K_m^n \,(\sp^{m-1}(A))^{1/(m-1)}) \);
			\item\label{lem:8:item:4} \( \sp^m(X) \le K_m^n \,(\sp^{m-1}(A))^{m/(m-1)} \); and
			\item\label{lem:8:item:5} If \( A \) is \( (m-1) \)-rectifiable, then \( X \) is \( m \)-rectifiable.
		\end{enumerate}
	\end{lem}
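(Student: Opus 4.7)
The plan is to construct $X$ as a union of contractible cone pieces built from a Vitali-type cover of $A$ at the scale $\rho := (\sp^{m-1}(A))^{1/(m-1)}$. First I dispose of the degenerate case $\H^{m-1}(A) = 0$: Lemma \ref{lem:17A} gives $H^{m-1}(A) = 0$, so $L := H^{m-1}(A) \setminus \{0\} = \emptyset$ and $X := A$ satisfies (a)--(d) (note $\dim A \leq m-2$ yields $\sp^m(A) = 0$). Henceforth assume $\sp^{m-1}(A) > 0$, and, by rescaling $A$ by $\l := (\sp^{m-1}(A))^{-1/(m-1)}$ (all four bounds in the conclusion are scale-invariant), normalize to $\sp^{m-1}(A) = 1$, so $\rho = 1$.

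For the main construction, fix a small constant $c > 0$ depending only on $m$ and $n$ (to be tuned at the end), and pick a maximal $c$-separated subset $\{y_1, \ldots, y_N\} \subset A$. By maximality $A \subset \bigcup_i B(y_i, c)$, and by volume packing in $\R^n$ every point of $\R^n$ lies in at most $K_n$ of the balls $B(y_i, 2c)$. Define the compact cone
\[ C_i := \{y_i + t(a - y_i) : a \in A \cap B(y_i, c),\, t \in [0,1]\}, \]
which is star-shaped at $y_i$ (hence contractible), contained in $B(y_i, c)$ and in the convex hull of $A$, and $m$-rectifiable whenever $A \cap B(y_i, c)$ is $(m-1)$-rectifiable. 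Set $X := \bigcup_i C_i$. Properties (a), (b), and (d) are immediate. For (c), the Jacobian of the cone parametrization $(a,t)\mapsto y_i+t(a-y_i)$ is bounded by $c\,t^{m-1}$, whence $\sp^m(C_i) \leq (c/m)\sp^{m-1}(A \cap B(y_i, c))$; summing and using the $K_n$-bounded overlap of the balls $B(y_i,c)$ yields $\sp^m(X) \leq c K_n/m$, which is the required isoperimetric bound after a suitable choice of $c$.

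The main obstacle is the coboundary condition $K^*(X, A) \supset H^{m-1}(A) \setminus \{0\}$. Since each $C_i$ is contractible, Lemma \ref{lem:2A} gives $K^*(C_i, A \cap C_i) = \tilde{H}^{m-1}(A \cap C_i) \setminus \{0\}$. The plan is then to apply Lemma \ref{lem:11A} with $X_r := C_r$, $A_r := A \cap C_r$, $B := A$, and $L_r := K^*(C_r, A_r)$; the nontrivial hypothesis to verify is that every nonzero $p \in H^{m-1}(A)$ restricts nontrivially to some $A_r$. For a connected component of $A$ of diameter $\leq c$, a single $C_r$ contains the whole component and Lemma \ref{lem:6A} immediately rules out a surviving nonzero class. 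The delicate case is a connected component whose diameter exceeds $c$ (e.g.\ a thin torus when $m \geq 3$), where a single cone is not enough and multiple $C_r$'s must be spliced together via a \v{C}ech--Mayer--Vietoris argument. Here the plan is to organize the cover into star-shaped or chain-like intersection patterns and iteratively invoke the refined gluing Lemmas \ref{lem:13A}, \ref{lem:15A}, or \ref{lem:16A}, after establishing $H^{m-2}$-triviality of the pairwise intersections (locally subdividing the cover if necessary so Lemma \ref{lem:17Apre} applies). This Mayer--Vietoris step is the hardest part of the argument and is precisely where the full strength of the spanning lemmas of \S\ref{sub:reifenberg_appendix} is essential.
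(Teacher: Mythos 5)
The construction $X := \bigcup_i C_i$ does not satisfy the coboundary condition, so the proposal contains a genuine error, not merely an unfinished step. Consider $A$ a round circle in the plane $\{z=0\} \subset \R^3$ with $m=2$, rescaled so $\sp^1(A)=1$. For $c$ small, each $C_i$ is a planar fan lying in the annulus $\{\,1/(2\pi)-c \le |x| \le 1/(2\pi)\,\}$, and hence $X = \bigcup_i C_i$ lies in that annulus. Radial projection from the origin gives a retraction of $X$ onto $A$, so $\iota(X,A)^*$ is surjective and $K^*(X,A)=\emptyset$. Thus $X$ is not a surface with coboundary $\supset H^1(A)\setminus\{0\}$. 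No amount of Mayer--Vietoris bookkeeping can establish a statement that is false, so the plan in your final paragraph cannot succeed as written: the difficulty you identify (a component of $A$ of diameter exceeding $c$) is not a hard case of the proof but an outright failure of the construction.

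The proposed remedy also misreads the role of Lemmas \ref{lem:15A} and \ref{lem:16A}. Those lemmas involve auxiliary sets $B_r$ (bridges over the pairwise intersections $D_r$), and their conclusion is about the enlarged set $C = A \cup_r B_r$, not about $A$ itself. To exploit them one must \emph{add} material to $X$ along those bridges, so the final surface is not $\bigcup_i C_i$ but something genuinely different. Moreover these lemmas require the $A_r$'s to form a star or a chain, a structure a maximal $c$-separated Vitali cover of a general compact $A$ does not possess and which you would have to engineer, and Lemma \ref{lem:16A} moreover assumes $m>2$ so the case $m=2$ (precisely the circle example above) would still need a separate treatment. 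Finally, a small but real slip: a component of diameter $\le c$ need not lie in any single $B(y_r,c)$, only in some $B(y_r,2c)$, so that case also needs adjustment. The paper's own proof of this lemma is by reference to Reifenberg's Lemma 8, whose inductive chain construction is specifically designed so that Lemmas \ref{lem:3A}, \ref{lem:11A}, \ref{lem:16A} are applicable, and which yields a set with the correct global topology; the cone-over-a-Vitali-cover approach is not a variant of it.
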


	\begin{proof}
		\eqref{lem:8:item:2}-\eqref{lem:8:item:4} is a cohomological version of Lemma 8 of \cite{reifenberg}, and the proof is identical for \( m \ge 2 \), replacing Lemmas 3A, 11A, and 16A with Lemmas \ref{lem:3A}, \ref{lem:11A}, and \ref{lem:16A}, respectively\footnote{The eagle-eyed reader will see that we added a few hypotheses to the assumptions in Lemma \ref{lem:16A}. A proof of the general statement was not apparent to us, and Adams did not provide a proof for his homological version. However, Lemma \ref{lem:16A}, with its additional assumptions, remains applicable in the proof of Lemma \ref{lem:8}.}. \eqref{lem:8:item:5} is a straightforward generalization.
	\end{proof}
	
	For notational purposes, let \( K_1^n=1 \).
	\begin{lem}
		\label{lem:haircut}
		Suppose \( A \) is compact, \( X \) is a compact surface with coboundary \( \supset L \) and \( \cN(p,r) \) is disjoint from \( A \), with \[ \sp^m(X(p,r)) \le \frac{r^m}{2\left(2^M m\right)^m\left(2^{n+M-1} K_m^n\right)^{m-1}}, \] where \( M\geq 1 \). If \( W \subset (r/2^M,r) \) has full Lebesgue measure, then there exist \( r'\in W \) and a compact surface \( \hat{X} \) with coboundary \( \supset L \) such that
	 	\begin{enumerate}
	 		\item\label{lem:haircut:item:0} \( \hat{X} \cap (\cN(p,r')^c) = X \cap (\cN(p,r')^c) \),
			\item\label{lem:haircut:item:1} \( \hat{X}(p,r') \) is contained in the convex hull of \( x(p,r') \),
			\item\label{lem:haircut:item:2} \( \hat{X}(p,r') \subset B(p,r')\setminus B(p,r'/2) \), and
	 		\item\label{lem:haircut:item:3} \( \sp^m(\hat{X}(p,r')) \le \frac{1}{2^{M+m-1}}\sp^m(X(p,r')). \)
	 	\end{enumerate}
		Moreover, if \( x(p,r') \) is \( (m-1) \)-rectifiable, then \( \hat{X}(p,r') \) is \( m \)-rectifiable.
	 \end{lem}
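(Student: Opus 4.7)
The proof is a standard ``haircut'': find a good radius $r'\in W$ at which the slice $x(p,r')$ has small $(m-1)$-measure, apply the Adams-type filling Lemma \ref{lem:8} to $A:=x(p,r')$ to produce a small compact surface $Y$ spanning it, and replace $X\cap \mathring B(p,r')$ by $Y$. The coboundary condition is preserved by Lemma \ref{lem:13}, and the geometric bounds come from Lemma \ref{lem:8}.

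\textbf{Choosing $r'$.} Set $\phi(t):=\sp^m(X(p,t))$. Applying the slicing inequality Lemma \ref{lem:4} to $X\cap(B(p,b)\setminus\mathring B(p,a))$ with $E=\{p\}$ and $f\equiv 1$ gives $\phi(b)-\phi(a)\ge\int_a^b\sp^{m-1}(x(p,t))\,dt$, whence $\phi$ is differentiable a.e. with $\phi'(t)\ge\sp^{m-1}(x(p,t))$ for almost every $t$. Suppose, toward a contradiction, that for almost every $t\in(r/2^M,r)$
\begin{equation*}
\sp^{m-1}(x(p,t))^{m/(m-1)}>\frac{\phi(t)}{2^{M+m-1}K_m^n}. \qquad (\dagger)
\end{equation*}
Then $\phi'(t)\,\phi(t)^{-(m-1)/m}>(2^{M+m-1}K_m^n)^{-(m-1)/m}$ a.e.\ on $(r/2^M,r)$, which, upon integration and discarding $\phi(r/2^M)^{1/m}\ge 0$, gives $\phi(r)\ge \frac{r^m(1-2^{-M})^m}{m^m(2^{M+m-1}K_m^n)^{m-1}}$. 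Since $M\ge 1$, $(1-2^{-M})^m\ge 2^{-m}$, so the lower bound simplifies to $\frac{r^m}{(2m)^m(2^{M+m-1}K_m^n)^{m-1}}$. Comparing against the hypothesized upper bound on $\sp^m(X(p,r))$ amounts (after cancellation) to showing $1+Mm+(n+M-1)(m-1)>m+(M+m-1)(m-1)$, i.e., $1+(Mm-m)+(m-1)(n-m)>0$, which holds for $M\ge 1$ and $n\ge m$. This contradicts the hypothesis, so the reverse of $(\dagger)$ holds on a positive-measure subset of $W\cap(r/2^M,r)$; since $\int_0^r\sp^{m-1}(x(p,t))\,dt<\infty$, we may additionally pick $r'$ there with $\sp^{m-1}(x(p,r'))<\infty$ (hence $\sp^m(x(p,r'))=0$).

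\textbf{Constructing and verifying $\hat X$.} Apply Lemma \ref{lem:8} to $x(p,r')$ to obtain a compact surface $Y$ with coboundary $\supset H^{m-1}(x(p,r'))\setminus\{0\}$, contained in the convex hull of $x(p,r')$ (hence in $B(p,r')$), with $Y\subset \cN(x(p,r'),K_m^n\sp^{m-1}(x(p,r'))^{1/(m-1)})$ and $\sp^m(Y)\le K_m^n\sp^{m-1}(x(p,r'))^{m/(m-1)}$. Set $\hat X:=(X\setminus\mathring B(p,r'))\cup Y$; this is compact. Since $K^*(X(p,r'),x(p,r'))\subset H^{m-1}(x(p,r'))\setminus\{0\}$, $Y$ is a surface with coboundary $\supset K^*(X(p,r'),x(p,r'))$, so Lemma \ref{lem:13} with $C=B(p,r')$ yields that $\hat X$ is a surface with coboundary $\supset L$. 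Item (a) is immediate, (b) is the convex-hull property, and (d) is exactly the reverse of $(\dagger)$. For (c), the neighborhood estimate gives $|y-p|\ge r'-K_m^n\sp^{m-1}(x(p,r'))^{1/(m-1)}$ for $y\in Y$, and a direct arithmetic chase using $(\dagger)^{\mathrm{rev}}$, $\phi(r')\le\phi(r)$, and $r'>r/2^M$ bounds the subtracted term by $r'/2$. Rectifiability of $\hat X(p,r')$ when $x(p,r')$ is $(m-1)$-rectifiable is Lemma \ref{lem:8}\eqref{lem:8:item:5}.

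\textbf{Main obstacle.} The conceptual structure is standard (slicing plus Adams filling); the delicate part is the constant-tracking in the contradiction step and the derivation of (c) from the reverse of $(\dagger)$. The peculiar-looking upper bound on $\sp^m(X(p,r))$ in the hypothesis is engineered precisely so that both of these comparisons close with the explicit factors $2^M$, $m$, $K_m^n$, $2^{n+M-1}$ appearing as written.
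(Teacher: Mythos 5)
Your proof is correct for \( m>1 \) and follows essentially the same strategy as the paper: derive a differential inequality from the slicing Lemma \ref{lem:4} to locate a good radius \( r'\in W \), then use Lemma \ref{lem:8} to fill the slice \( x(p,r') \) and Lemma \ref{lem:13} to preserve the coboundary condition. There are two mild but real deviations worth noting. First, you run the Gr\"onwall-type argument with \( \phi(t)=\sp^m(X(p,t)) \) directly, whereas the paper uses the absolutely continuous \( I(p,s)=\int_0^s\sp^{m-1}(x(p,t))\,dt \); your version is fine because for a monotone \( g \) one still has \( \int_a^b g'\le g(b)-g(a) \), which is the direction you need. (Be a touch careful that the identity \( \phi(b)-\phi(a)\ge\int_a^b\sp^{m-1}(x(p,t))\,dt \) literally requires \( \sp^m(x(p,a))=0 \); it holds for a.e.\ \( a \), which is all you need for \( \phi'\ge\sp^{m-1}(x(p,\cdot)) \) a.e.) Second, your threshold \( \phi(t)/(2^{M+m-1}K_m^n) \) in \( (\dagger) \) differs from the paper's \( I(p,t)/(2^{n+M-1}K_m^n) \); since \( n\ge m \), your version is a weaker intermediate conclusion but — as you observed — it is exactly calibrated to give item (d) on the nose, while the paper uses \( n\ge m \) at that point.

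The one genuine gap is the case \( m=1 \): the statement does not assume \( m\ge 2 \), and Lemma \ref{lem:8} (which you invoke to build \( Y \)) is only stated for \( m\ge 2 \). The paper disposes of \( m=1 \) separately by noting that Lemma \ref{lem:4} forces some \( r'\in W \) with \( x(p,r')=\emptyset \), and then \( \hat X=X\setminus B(p,r') \) works via Lemma \ref{lem:13}. Your write-up should include this as a separate, trivial first step before applying the filling lemma.
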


	\begin{proof}
		The \( m=1 \) case is immediate, since Lemma \ref{lem:4} implies that there exists \( r'\in W \) such that \( x(p,r') \) is empty. The set \( \hat{X}=X\setminus B(p,r') \) satisfies the desired requirements by Lemma \ref{lem:13}.
		
		Now assume \( m>1 \). We first prove that if \( W \subset (r/2^M,r) \) has full Lebesgue measure, then there exists \( r' \in W \) such that
		\begin{align}\label{haircutinequality}
			\sp^{m-1}(x(p,r'))^{m/(m-1)} \le \frac{1}{2^{n+M-1} K_m^n} \int_0^{r'} \sp^{m-1} (x(p,t))dt.
		\end{align}
 		Suppose there is no such \( r' \). Let \( I(p,s):=\int_0^s \sp^{m-1}(x(p,t))dt \). By the Lebesgue Differentiation Theorem, \[ \frac{\frac{d}{ds}I(p,s)}{I(p,s)^{(m-1)/m}} > \frac{1}{(2^{n+M-1} K_m^n)^{(m-1)/m}} \] for almost every \( r/2^M < s < r \). Integrating, this implies
		\begin{align} 
			\label{lem:halfradius:eq:1p}
			\int_{r/2^M}^r \frac{d}{ds}\left(I(p,s)^{1/m}\right) ds > \frac{r}{2^M m\left(2^{n+M-1} K_m^n\right)^{(m-1)/m}}.
		\end{align}  
		Since \( I(p,s) \) is increasing and absolutely continuous, the function \( I(p,s)^{1/m} \) is also absolutely continuous, and so the left hand side of \eqref{lem:halfradius:eq:1p} equals  \( I(p,r)^{1/m} - I(p,r/2^M)^{1/m} \). Lemma \ref{lem:4} gives \[ \sp^m(X(p,r)) \ge I(p, r) > \frac{r^m}{\left(2^M m\right)^m\left(2^{n+M-1} K_m^n\right)^{m-1}}, \] contradicting our initial assumption. 

		Lemma \ref{lem:4} and \eqref{haircutinequality} yield the existence of some \( r' \in W \) such that
		\begin{align}
			\label{lem:haircut:eq:1}
			\sp^{m-1}(x(p,r'))^{m/(m-1)} \leq \frac{1}{2^{n+M-1} K_m^n}\sp^m(X(p,r')).
		\end{align}
		Thus, our initial assumption on \( \sp^m(X(p,r)) \) implies 
		\begin{align*}
			\sp^{m-1}(x(p,r'))^{1/(m-1)} \leq \frac{1}{2^{(n+M-1)/m} (K_m^n)^{1/m}}\sp^m(X(p,r))^{1/m} < \frac{r'}{2K_m^n},
		\end{align*}
		and we may apply Lemmas \ref{lem:8} and \ref{lem:13} to find the required set \( \hat{X} \).
	\end{proof}
 
	\begin{cor}
		\label{cor:gridsmall}
		Suppose \( A \) is compact, \( X \) is a compact surface with coboundary \( \supset L \), and \( Q \) is a closed \( n \)-cube of side length \( \ell \) whose interior is disjoint from \( A \), with \[ \sp^m(X \cap \mathring{Q}) < \frac{\ell^m}{2\left(4m\right)^m\left(2^n K_m^n\right)^{m-1}}. \] Then there exist a compact surface \( X' \) with coboundary \( \supset L \) and \( Y\subset \fr\, Q \) such that
		\begin{enumerate}
			\item \( X'\cap Q^c = X\cap Q^c \),
			\item \( X'\cap Q = (X\cap \fr\, Q)\cup Y \), and
			\item \( \sp^m(Y)\leq (4n)^m \sp^m(X\cap \mathring{Q}). \)
		\end{enumerate}
		Moreover, if \( X\cap Q \) is \( m \)-rectifiable, then \( X'\cap Q \) is \( m \)-rectifiable.
	\end{cor}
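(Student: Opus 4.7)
The plan is to combine Lemma \ref{lem:haircut}, applied to the inscribed ball of $Q$, with a radial retraction from the center of $Q$ onto $\fr\, Q$, and then verify that the coboundary condition survives via Lemma \ref{lem:6A}. The hypothesis constants have been rigged so that the haircut lemma applies directly.

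Let $p$ denote the center of $Q$ and set $r = \ell/2$, $M = 1$. A direct computation shows that the threshold $\frac{r^m}{2(2M m)^m (2^{n+M-1} K_m^n)^{m-1}}$ from Lemma \ref{lem:haircut} equals the given bound $\frac{\ell^m}{2(4m)^m (2^n K_m^n)^{m-1}}$, so $\sp^m(X(p,r)) \le \sp^m(X \cap \mathring Q)$ meets the hypothesis of Lemma \ref{lem:haircut}. Take $W \subset (\ell/4, \ell/2)$ to be the full-measure subset of $r'$ for which $x(p,r')$ is $(m-1)$-rectifiable whenever $X \cap Q$ is $m$-rectifiable (available by Lemma \ref{lem:4}). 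Lemma \ref{lem:haircut} then produces $r' \in W$ and a compact surface $\hat X$ with coboundary $\supset L$ such that $\hat X$ agrees with $X$ off $\cN(p,r')$ and $\hat X(p,r') \subset B(p,r') \setminus B(p,r'/2)$; in particular $\hat X \cap \mathring Q \subset \mathring Q \setminus B(p,r'/2)$.

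Let $\pi: Q \setminus \{p\} \to \fr\, Q$ be the radial retraction sending $x \ne p$ to the exit point of the ray from $p$ through $x$. Extending $\pi$ by the identity off $Q$ yields a continuous map on $\R^n \setminus \{p\}$ that restricts to the identity on $\fr\, Q$ and on $Q^c$. An elementary computation on each of the $2n$ cones over the faces of $Q$ bounds the Lipschitz constant of $\pi|_{Q \setminus B(p, r'/2)}$ by a constant $C_n$ that can be chosen so that $2 C_n^m \le (4n)^m$. Now $\hat X$ lies entirely in the domain of continuity of $\pi$; define $\phi := \pi|_{\hat X}$ and set $Y := \phi(\hat X \cap \mathring Q) \subset \fr\, Q$ and $X' := \phi(\hat X) = (X \setminus \mathring Q) \cup Y$. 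Conclusions (a) and (b) are immediate. Since $A \cap \mathring Q = \emptyset$ forces $\phi|_A = \mathrm{id}_A$, Lemma \ref{lem:6A} applied to $\phi: (\hat X, A) \to (X', A)$ with $L_A = L$ gives that $X'$ is a surface with coboundary $\supset L$.

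For the measure estimate (c), decompose
\[
\sp^m(\hat X \cap \mathring Q) \le \sp^m(\hat X(p, r')) + \sp^m\bigl(X \cap (\mathring Q \setminus B(p, r'))\bigr)
\]
(using $\hat X = X$ off $\cN(p, r')$) and apply Lemma \ref{lem:haircut}(d) together with $\sp^m(X(p, r')) \le \sp^m(X \cap \mathring Q)$ to obtain $\sp^m(\hat X \cap \mathring Q) \le 2\, \sp^m(X \cap \mathring Q)$. Lipschitz factoring yields $\sp^m(Y) \le C_n^m \cdot 2\, \sp^m(X \cap \mathring Q) \le (4n)^m\, \sp^m(X \cap \mathring Q)$. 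For the rectifiability claim, if $X \cap Q$ is $m$-rectifiable then $x(p,r')$ is $(m-1)$-rectifiable by our choice of $W$, so $\hat X(p, r')$ is $m$-rectifiable by the moreover clause of Lemma \ref{lem:haircut}, and hence $Y$, as a Lipschitz image of the $m$-rectifiable set $\hat X \cap \mathring Q$, is $m$-rectifiable. The main technical obstacle is the explicit Lipschitz estimate on the radial projection $\pi$: one must compute the derivative of $\pi$ on each coordinate cone and combine this with the lower bound $r'/2 > \ell/8$ to pin down a constant compatible with the stated $(4n)^m$.
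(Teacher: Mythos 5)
Your construction is the same as the paper's — apply Lemma \ref{lem:haircut} with $M=1$ to the inscribed ball $B(p,\ell/2)$, then radially project $\hat X\cap\mathring Q$ from $p$ onto $\fr\,Q$ and invoke Lemma \ref{lem:6A}. The coboundary verification and conclusions (a), (b) are fine. But your measure estimate has a genuine gap that the paper avoids.

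You bound $\sp^m(\hat X(p,r'))\le\sp^m(X\cap\mathring Q)$ and $\sp^m(X\cap(\mathring Q\setminus B(p,r')))\le\sp^m(X\cap\mathring Q)$ \emph{separately}, which costs you the factor of $2$ in $\sp^m(\hat X\cap\mathring Q)\le 2\,\sp^m(X\cap\mathring Q)$. You then need the Lipschitz constant $C_n$ of the radial projection on $Q\setminus B(p,r'/2)$ to satisfy $2C_n^m\le(4n)^m$, i.e.\ $C_n\le 4n\cdot 2^{-1/m}$. This fails: on the cone over a face $\{x_1 = \ell/2\}$, a careful computation of $\|D\pi(x)\|$ (writing $\pi(x)=(\ell/2)x/x_1$ relative to $p=0$ and optimizing over unit vectors) gives $\|D\pi(x)\|\le\frac{(\ell/2)n}{|x|}$, and this bound is essentially attained near the corners. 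Since the only lower bound on $|x|$ available on $\hat X\cap\mathring Q$ is $|x|>r'/2>\ell/8$, the constant approaches $4n$ as $r'\to\ell/4$. The haircut lemma requires $W$ to have \emph{full} Lebesgue measure in $(\ell/4,\ell/2)$, so you cannot force $r'$ away from the left endpoint to recover the missing $2^{-1/m}$ margin.

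The fix is simple and is what the paper does: the two pieces $X(p,r')$ and $X\cap(\mathring Q\setminus B(p,r'))$ are disjoint subsets of $X\cap\mathring Q$, and Lemma \ref{lem:haircut}(d) gives $\sp^m(\hat X(p,r'))\le 2^{-m}\sp^m(X(p,r'))\le\sp^m(X(p,r'))$. Summing yields $\sp^m(\hat X\cap\mathring Q)\le\sp^m(X(p,r'))+\sp^m(X\cap(\mathring Q\setminus B(p,r')))=\sp^m(X\cap\mathring Q)$ with no extra factor, and then $C_n=4n$ suffices. One additional minor point: the full-measure claim for slice rectifiability does not come from Lemma \ref{lem:4} (the slicing inequality); it is a coarea-type fact about rectifiable sets, and should be cited as such.
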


	\begin{proof}
		Let \( p \) be the center point of \( Q \). Then \( B(p,\ell/2) \subset Q \) satisfies the conditions of Lemma \ref{lem:haircut}, so there exists \( \ell/4 < r < \ell/2 \) and a compact surface \( \hat{X} \) with coboundary \( \supset L \) satisfying \ref{lem:haircut} \eqref{lem:haircut:item:0}-\eqref{lem:haircut:item:3}.

		Thus, \( \hat{X}\cap \mathring{Q} \) is contained in the region \( \mathring{Q} \cap \cN(p,r/2)^c \), so we may project \( \hat{X}\cap \mathring{Q} \) from \( p \) to a set \( Y\subset \fr\, Q \), and the image \( X' \) of \( \hat{X} \) under this projection is a surface with coboundary \( \supset L \) by Lemma \ref{lem:6A}. Moreover, since the Lipschitz constant of the projection is bounded above by \( 4n \), we have
		\begin{align*}
			\sp^m(Y) &\le (4n)^m\sp^m(\hat{X} \cap \mathring{Q})\\
			&= (4n)^m\left[\sp^m(X \cap \mathring{Q}\cap (B(p,r)^c))+\sp^m(\hat{X}(p,r))\right]\\
			&\leq (4n)^m\left[\sp^m(X \cap \mathring{Q}\cap (B(p,r)^c))+ \frac{1}{2^m}\sp^m(X(p,r))\right]\\
			&\leq (4n)^m \sp^m(X\cap \mathring{Q}).
		\end{align*}
	\end{proof}
	
	\begin{defn}
		\label{def:binarysubdivision}
		A collection \( \cal{Q} \) of closed \( n \)-cubes is a \emph{\textbf{dyadic subdivision of \( \R^n \)}} if \( \cal{Q}=\sqcup_{k\in \Z}\cal{Q}_k \), where each \( \cal{Q}_k \) is a cover of \( \R^n \) by \( n \)-cubes of side length \( 2^{-k} \) that intersect only on faces, and such that \( \cal{Q}_{k+1} \) is a refinement of \( \cal{Q}_k \). The \textbf{\emph{\( k \)-skeleton}} of a sub-collection \( \cal{Q}' \) of \( \cal{Q} \) is the union of the \( k \)-dimensional faces of the cubes in \( \cal{Q}' \).
	\end{defn}

	\begin{thm}
		\label{thm:poof}
		Suppose \( A \) is compact. Given \( U\subset \R^n \) an open set disjoint from \( A \), a compact set \( C\subset U \) and \( \e>0 \), there exist constants \( 0<\cal{K}_{C,U,\e}<\i \) and \( 0<\cal{L}_{C,U,\e}<\i \), independent of \( A \), such that if \( X \) is a compact surface with coboundary \( \supset L \) satisfying \( \H^m(X\cap U)<\cal{K}_{C,U,\e} \), then there exists a compact surface \( Y \) with coboundary \( \supset L \) disjoint from \( C \), such that 
		\begin{enumerate}
			\item \( Y\subset \cN(X,\e) \),
			\item \( \H^m(Y\cap U)\leq \cal{L}_{C,U,\e} \H^m(X\cap U) \),
			\item \( Y\cap (U^c)=X\cap (U^c) \),
			\item If \( \H^m(X\setminus A)<\i \) and \( X=(X\setminus A)^*\cup A \), then \( Y=(Y\setminus A)^*\cup A \),
			\item If \( X\setminus A \) is \( m \)-rectifiable, then \( Y\setminus A \) is \( m \)-rectifiable.
		\end{enumerate} 
	\end{thm}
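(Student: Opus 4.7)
The plan is a Federer-Fleming style iterated projection argument based on Corollary \ref{cor:gridsmall}. First, choose a scale $\ell > 0$ small enough that $\ell\sqrt{n} < \e$ and every dyadic $n$-cube of side length $\ell$ within distance $\ell$ of $C$ is contained in $U$. Let $\cal{Q}$ be a dyadic subdivision of $\R^n$ at scale $\ell$, let $V_0$ be the union of cubes in $\cal{Q}$ meeting $C$, and let $V$ be the union of $V_0$ together with all cubes of $\cal{Q}$ adjacent to $V_0$. Then $V \subset U$ and $C \subset \mathring{V}_0$.

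Set $X_0 = X$ and, for $j = 1, \ldots, n - m + 1$, produce a compact surface $X_j$ with coboundary $\supset L$ satisfying $X_j \cap V \subset$ the $(n-j)$-skeleton of $V$ and $\H^m(X_j \cap V) \leq C_1\,\H^m(X_{j-1} \cap V)$, where $C_1 = C_1(n,m)$ is a fixed constant, by applying Corollary \ref{cor:gridsmall} (in $\R^n$ at step $j=1$, and in the affine span of each $(n-j+1)$-dimensional face at subsequent steps, a reformulation valid by inspection of the proofs of Lemmas \ref{lem:haircut} and \ref{lem:8}) to each such face of $V$ in turn. Choosing $\cal{K}_{C,U,\e}$ so that $C_1^{n-m+1}\cal{K}_{C,U,\e}$ does not exceed the measure threshold required at any step of the iteration (a constant depending only on $\ell$, $n$, $m$, and the Adams constants $K_k^n$) guarantees that the hypothesis of Corollary \ref{cor:gridsmall} is satisfied at every stage. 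After $n-m+1$ iterations, $X_{n-m+1} \cap V$ is contained in the $(m-1)$-skeleton of $V$, which has topological dimension at most $m-1$, so $\H^m(X_{n-m+1} \cap V) = 0$.

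Define $Y = (X_{n-m+1} \setminus \mathring{V}_0)^* \cup A$. Then $Y \cap C = \emptyset$ since $C \subset \mathring{V}_0$, and since $X_{n-m+1} \setminus Y$ is contained in the $(m-1)$-skeleton of $V$ (hence of topological dimension $\leq m - 1$), Lemma \ref{lem:corespans} and Corollary \ref{cor:corespans} together yield that $Y$ is a compact surface with coboundary $\supset L$ satisfying $Y = (Y \setminus A)^* \cup A$. The containment $Y \subset \cN(X, \e)$ holds because each iteration only relocates points within a single face of diameter at most $\ell\sqrt{n} < \e$. The measure bound $\H^m(Y \cap U) \leq \H^m(X \cap U)$ (so we may take $\cal{L}_{C,U,\e} = 1$) follows from $\H^m(Y \cap V) = 0$ and $Y \setminus V = X \setminus V$, while $m$-rectifiability of $Y \setminus A$, under the assumption of $m$-rectifiability of $X \setminus A$, is inherited through the corresponding clauses of Corollary \ref{cor:gridsmall} and Corollary \ref{cor:corespans}. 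The principal obstacle is the careful bookkeeping of measure estimates across the iteration and the technical verification that the lower-dimensional analogs of Corollary \ref{cor:gridsmall} preserve the surface-with-coboundary property in the ambient $\R^n$; this last point can be handled by extending each projection compatibly to a continuous map of $\R^n$ that fixes $A$ and then invoking Lemma \ref{lem:6A}.
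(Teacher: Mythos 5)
Your approach mirrors the paper's: cover $C$ with a dyadic grid contained in $U$, then iteratively apply Corollary~\ref{cor:gridsmall} to push $X$ into progressively lower-dimensional skeleta, use the surgery lemmas (\ref{lem:13}, \ref{lem:corespans}, \ref{cor:corespans}) to preserve the coboundary property, and finally discard the low-dimensional material near $C$. The paper differs only at the last stage: rather than applying Corollary~\ref{cor:gridsmall} one more time at the $m$-face level, it exploits the fact that $\sp^m(X\cap U)<2^{-km}=\sp^m(F_i)$ to pick a point $p_i\in F_i\setminus X$ in each $m$-face and radially project from it, which avoids having to re-examine the constants in Lemma~\ref{lem:haircut} in the ambient dimension $j=m$.

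There is a genuine gap in your handling of item 3. You define $Y = (X_{n-m+1}\setminus\mathring{V}_0)^*\cup A$, applying $(\cdot)^*$ unconditionally. But the theorem requires $Y\cap U^c = X\cap U^c$. If $X$ contains a set of $\H^m$-measure zero in $U^c\setminus A$ — which is entirely possible, since the hypothesis controls only $\H^m(X\cap U)$ — the reduction $(\cdot)^*$ deletes it, so $Y\cap U^c \subsetneq X\cap U^c$. Relatedly, your appeal to Corollary~\ref{cor:corespans} to conclude $Y=(Y\setminus A)^*\cup A$ needs $\H^m(X_{n-m+1}\setminus A)<\infty$, which you cannot extract from the hypothesis $\H^m(X\cap U)<\mathcal{K}_{C,U,\e}$ alone. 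The paper avoids both problems by first producing $Y$ without the $(\cdot)^*$ reduction and establishing items 1–3 and 5; item 4 is then obtained conditionally, by applying Corollary~\ref{cor:corespans} to $Y$ only when the hypotheses $\H^m(X\setminus A)<\infty$ and $X=(X\setminus A)^*\cup A$ hold, and then checking that the reduced set still satisfies 1–3 because the construction changes nothing on a neighborhood of $U^c$.

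One more caution: your proposal that the in-face radial projections ``can be handled by extending each projection compatibly to a continuous map of $\R^n$ that fixes $A$ and then invoking Lemma~\ref{lem:6A}'' is more delicate than it sounds. The radial projection on a $j$-face $G_i$ is undefined at its center and does not extend continuously to a full $n$-dimensional neighborhood of $G_i$ without further control on where $X$ sits near $G_i$. The paper instead treats $X\cap G_i$ as a pair in the affine span of $G_i$, applies Corollary~\ref{cor:gridsmall} there, and then reattaches via the surgery Lemma~\ref{lem:13}; this is the cleaner route and you should follow it rather than seeking an ambient extension of the projection.
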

	
	The idea of the proof is to cover \( C \) with a grid of cubes, then repeatedly apply Corollary \ref{cor:gridsmall} to push \( X \) into lower dimensional skeleta. Once \( X \) is contained in the \( m \)-skeleton, we simply project \( X \) onto the \( (m-1) \)-skeleton, after which point Lemma \ref{lem:corespans} can be applied. We will need \( \H^m(X\cap U) \) to be small enough so that we may apply Corollary \ref{cor:gridsmall} successively, and so that once \( X \) has been pushed onto the \( m \) skeleton, there is not enough material to cover even one \( m \)-face of a single cube, so that we can perform the projection.
	
	\begin{proof}
		Let \( \cal{Q} \) be a dyadic subdivision of \( \R^n \) and let \( k \) be large enough so that the diameter \( 2^{-k}\sqrt{n} \) of a cube in \( \cal{Q}_k \) is less than \( \e \), and so that there exists a collection \( \varphi=\{Q_1,\dots,Q_N\}\subset \cal{Q}_k \) of cubes contained in \( U \), such that \( C \) is contained in the interior of \( \Phi\equiv \cup_{Q\in \varphi}Q \).
		
		Suppose \( X\cap \mathring{\Phi} \) is contained in the \( m \)-skeleton of \( \varphi \) and that \( \sp^m(X\cap U)<2^{-km} \). Let \( \{F_1,\dots, F_S\} \) denote the \( m \)-faces of \( \varphi \) which are not contained in \( \fr\,\Phi \). For each \( 1\leq i\leq S \) there exists a point \( p_i\in F_i\setminus X \), and so if \( \pi_i \) denotes the radial projection from \( p_i \) onto the \( (m-1) \)-faces of \( F_i \), the set \( \hat{X}\equiv \pi_S\circ\cdots\circ \pi_1 X \) is still, by Lemma \ref{lem:6A}, a surface with coboundary \( \supset L \). But now, \( \hat{X}\cap \mathring{\Phi} \) is contained in the the \( (m-1) \)-skeleton of \( \varphi \), and so Lemmas \ref{lem:13} and \ref{lem:corespans} imply that \( \hat{X}\setminus \mathring{\Phi} \) is a surface with coboundary \( \supset L \).

		For \( m\leq j\leq n \), let \( P(j) \) be the following statement: \emph{There exist \( \cal{K}_j>0 \) and \( \cal{L}_j<\i \) such that if \( X \) is a compact surface with coboundary \( \supset L \) such that \( X\cap\mathring{\Phi} \) is contained in the \( j \)-skeleton of \( \varphi \), and such that \( \sp^m(X\cap U)<\cal{K}_j \), then there exists a compact surface \( Y \) with coboundary \( \supset L \) disjoint from \( C \) such that \( \sp^m(Y\cap U)<\cal{L}_j\sp^m(X\cap U) \) and \( Y\cap (U^c)=X\cap (U^c) \).} 
		
		We have proved \( P(m) \) (setting \( \cal{K}_m=2^{-km} \) and \( \cal{L}_m=1 \),) and we wish to prove \( P(n) \). Supposing we have proved \( P(j-1) \), let us prove \( P(j) \).
		
		Let \( \mathcal{G}=\{G_1,\dots, G_S \} \) denote the \( j \)-faces of \( \varphi \) which are not contained in \( \fr\,\Phi \). Let \( \cal{K}_j \) be the constant
		\[
		\inf\left\{\frac{\cal{K}_{j-1}}{(1+(4j)^m)^S},\,\, (1+(4j)^m)^{-S}2^{-km-1}\left(4m\right)^{-m}\left(2^j K_m^j\right)^{1-m}\right\},
		\]
		and suppose \( X \) satisfies the conditions of \( P(j) \). The set \( X\cap G_1\subset \R^j \) is a compact surface with coboundary \( \supset K^*(X\cap G_1, X\cap \p G_1) \), where \( \p G_i \) denotes the union of the \( (j-1) \)-faces of \( G_i \). Therefore by Corollary \ref{cor:gridsmall} and Lemma \ref{lem:13}, there exists a surface \( Z_1\subset \p G_1 \) with coboundary \( \supset K^*(X\cap G_1, X\cap \p G_1) \), such that
		\begin{enumerate}
			\item \( X_1\equiv (X\setminus (X\cap G_1))\cup Z_1 \) is a compact surface with coboundary \( \supset L \),
			\item \( X_1\cap \mathring{\Phi} \) is contained in the \( j \)-skeleton of \( \varphi \),
			\item \( X_1\cap G_1\subset \p G_1 \), and
			\item \( \sp^m(X_1\cap U)\leq (1+(4j)^m)\sp^m(X \cap U) \).
		\end{enumerate}

		Now let us repeat the above construction for the remaining \( j \)-faces in \( \mathcal{G} \): Let \( 1\leq s< S \) and suppose there exists a compact surface \( X_s \) with coboundary \( \supset L \) such that
		\begin{enumerate}
			\item \( X_s\cap \mathring{\Phi} \) is contained in the \( j \)-skeleton of \( \varphi \),
			\item \( X_s\cap G_r\subset \p G_r \) for all \( 1\leq r\leq s \), and
			\item \( \sp^m(X_s\cap U) \leq (1+(4j)^m)^s \sp^m(X \cap U) \).
		\end{enumerate}
		
		Then by Corollary \ref{cor:gridsmall} and Lemma \ref{lem:13}, there exists a compact surface \( X_{s+1} \) with coboundary \( \supset L \) such that 
		\begin{enumerate}
			\item \( X_{s+1}\cap \mathring{\Phi} \) is contained in the \( j \)-skeleton of \( \varphi \),
			\item \( X_{s+1}\cap G_r\subset \p G_r \) for all \( 1\leq r\leq s+1 \), and
			\item \( \sp^m(X_{s+1}\cap U) \leq (1+(4j)^m)^{s+1} \sp^m(X \cap U) \).
		\end{enumerate}
		The set \( X_S \) satisfies the conditions of \( P(j-1) \), and so setting \( \cal{L}_j=\cal{L}_{j-1}(1+(4j)^m)^S \), we have proved \( P(j) \).
		
		Since \( \H^m\leq \sp^m\leq 2^m \H^m \), we convert the spherical measure constants \( \cal{K}_n \) and \( \cal{L}_n \) into Hausdorff measure constants by letting \( \cal{K}_{C,U,\e}= 2^{-m} \cal{K}_n \) and \( \cal{L}_{C,U,\e} = 2^m\cal{L}_n \). Let \( Y \) be the set \( X_S \) achieved in statement \( P(n) \). Per our construction, since \( 2^{-k}\sqrt{n}<\e \), it holds that \( Y\subset \cN(X,\e) \) is a surface with coboundary \( \supset L \) disjoint from \( C \). The constants \( \cal{K}_{C,U,\e} \) and \( \cal{L}_{C,U,\e} \) depend only on \( n,m,C,U,\e,\cal{Q},k \) and \( \varphi \).
		
		Finally, if \( \H^m(X\setminus A)<\i \) and \( X=(X\setminus A)^*\cup A \) then replace \( Y \) with the set \( Y'\equiv (Y\setminus A)^*\cup A \), which is a surface with coboundary \( \supset L \) by Corollary \ref{cor:corespans}. Then \( Y'=(Y'\setminus A)^*\cup A \), and since \( Y'\subset Y \), it also holds that \( Y'\subset \cN(X,\e) \) and \( \H^m(Y'\cap U)< \cal{L}_{C,U,\e} \H^m(X\cap U) \). It remains to show that \( Y'\cap (U^c)=X\cap (U^c) \). But this holds too, since by way of the above construction, \( Y\cap (\Phi^c)=X\cap (\Phi^c) \), and \( \Phi\subset U \).
	\end{proof}
	
	\begin{defn}
		Let \( h \) denote the inclusion of \( \R^n \) into its one-point compactification \( S^n \). The set \( A\subset \R^n \) is compact, so the Alexander dual of \( l\in H^{m-1}(A); \Z) \) in \( H_{n-m}(S^n\setminus h(A)) \) is represented by a singular cycle \( \sigma=\sum n_i \sigma_i \), where each \( \sigma_i \) is a map from the \( (n-m) \)-simplex \( \D_{n-m} \) to \( S^n\setminus h(A) \), and \( n_i\in G\setminus \{0\} \). Let \( R=\cup_i \sigma_i(\D_{n-m}) \). Call \( h^{-1}(R) \) the \emph{\textbf{carrier in \( \R^n \)}} of \( \sigma \).
	\end{defn}

	\begin{cor}
		\label{cor:bddbelow}
		 If \( A \) is compact and \( L\neq \emptyset \), then there exists a constant \( \cal{K}_{L,A}>0 \) such that if \( X \) is a compact surface with coboundary \( \supset L \), then \( \H^m(X\setminus A)\geq \cal{K}_{L,A} \).
	\end{cor}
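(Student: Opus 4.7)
The plan is to invoke Theorem~\ref{thm:poof} with a carefully chosen obstruction set. The key observation is that because \( L\neq\emptyset \), Alexander duality produces a compact subset \( R\subset \R^n\setminus A \) that every surface with coboundary \( \supset L \) is forced to meet; if \( \H^m(X\setminus A) \) could be made arbitrarily small, Theorem~\ref{thm:poof} would let us deform \( X \) off \( R \) without losing the coboundary condition, a contradiction.

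First I would fix some \( l\in L \). By the Alexander-duality characterization of surfaces with coboundary recorded at the end of \S\ref{sub:cohomological}, \( l \) corresponds to a nonzero class in \( \tilde{H}_{n-m-1}(S^n\setminus h(A)) \) represented by a singular cycle \( \sigma \), and every compact surface \( X \) with coboundary \( \supset L \) must meet the support of \( \sigma \) in \( S^n \). Since \( \{\infty\} \) is a single point of the open manifold \( S^n\setminus h(A) \), a small perturbation of \( \sigma \) within its homology class pushes its image off \( \infty \); the carrier \( R\subset \R^n \) of the perturbed representative is then compact and disjoint from \( A \), and any compact surface \( X \) with coboundary \( \supset L \) satisfies \( X\cap R\neq \emptyset \).

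Next, I would choose a compact set \( C \) with \( R\subset \mathring{C} \) and \( C\cap A=\emptyset \), together with an open set \( U\supset C \) with \( U\cap A=\emptyset \); for instance, take \( C=B(R,\d) \) and \( U=\cN(R,2\d) \) for sufficiently small \( \d>0 \). Fix any \( \e>0 \) and set \( \cal{K}_{L,A}:=\cal{K}_{C,U,\e} \), the constant supplied by Theorem~\ref{thm:poof}; note that \( \cal{K}_{L,A} \) depends only on \( L \) and \( A \), because the data \( l,R,C,U,\e \) are determined by them. Suppose toward contradiction that some compact surface \( X \) with coboundary \( \supset L \) satisfies \( \H^m(X\setminus A)<\cal{K}_{L,A} \). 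Since \( U\cap A=\emptyset \), we get \( \H^m(X\cap U)\le \H^m(X\setminus A)<\cal{K}_{C,U,\e} \), so Theorem~\ref{thm:poof} produces a compact surface \( Y \) with coboundary \( \supset L \) that is disjoint from \( C \), and hence from \( R \). This contradicts the intersection property of the previous paragraph, yielding the desired lower bound.

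The only step that requires real care is the assertion that a representative cycle for the Alexander dual of \( l \) can be chosen with compact carrier in \( \R^n \). The paper's introductory remark already formulates the intersection condition in \( S^n \); the subtlety is to verify that one may push the cycle off \( \infty \) without changing its homology class, which is where the main technical work of the proof will sit. Once this is in place, the conclusion is a clean contradiction argument combining Alexander duality with Theorem~\ref{thm:poof}.
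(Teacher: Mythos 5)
Your proposal is correct in outline and follows essentially the same strategy as the paper's: pick \( l\in L \), take its Alexander dual, localize a compact obstruction set away from \( A \) that every surface with coboundary \( \supset L \) must meet, and invoke Theorem~\ref{thm:poof} to get a contradiction when \( \H^m(X\setminus A) \) is too small.

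The one place you diverge --- and where, as you flag, the remaining technical work sits --- is in arranging for a \emph{compact} carrier inside \( \R^n\setminus A \). You propose to push the representing cycle off \( \infty \) by a perturbation inside \( S^n\setminus h(A) \). This can be made rigorous (the long exact sequence of the pair \( \bigl(S^n\setminus h(A),\, S^n\setminus h(A)\setminus\{\infty\}\bigr) \), together with excision and local homology, shows \( H_{n-m}\bigl(S^n\setminus h(A)\setminus\{\infty\}\bigr)\to H_{n-m}\bigl(S^n\setminus h(A)\bigr) \) is surjective since \( n-m<n \)), but it costs an extra homological argument. The paper sidesteps it entirely: it fixes \( r \) with \( A\subset B(0,r) \), takes \( C \) to be the intersection of the carrier with \( B(0,2r) \) (compact whether or not \( \infty \) lies on the carrier), first replaces \( X \) by its radial projection \( \pi(X) \) onto \( B(0,r) \) (a surface with coboundary \( \supset L \) by Lemma~\ref{lem:6A}, with \( \pi \) a contraction so measure does not increase), and then runs Theorem~\ref{thm:poof} with \( \e=r \). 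The conclusion \( Y\subset\cN(\pi(X),r)\subset B(0,2r) \) then makes disjointness from \( C \) automatically imply disjointness from the full carrier. Both routes work; the paper's avoids the general-position step at the cost of the projection and the ball bookkeeping. You should also double-check that the paper's ``intersection'' remark in \S\ref{sub:cohomological} is stated without proof --- if you wish to lean on it formally, you would either supply the naturality-of-Alexander-duality argument the paper uses in its own proof of this corollary, or prove the remark first.
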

		
	\begin{proof}
		Let \( r<\i \) be large enough so that \( A\subset B(0,r) \). Let \( l\in L \) and let \( C \) be the intersection of \( B(0,2r) \) with the carrier in \( \R^n \) of some singular cycle \( \sigma \) representing the Alexander dual of \( l \). Then \( C \), being compact, has a neighborhood \( U \) disjoint from \( A \). Let \( \cal{K}_{L,A}=\cal{K}_{C,U,r} \) be the constant achieved in Theorem \ref{thm:poof}.
		
		Now suppose by contradiction that \( X \) is a compact surface with coboundary \( \supset L \), and that \( \H^m(X\setminus A)<\cal{K}_{L,A} \). Let \( \pi \) denote the radial projection from \( \R^n \) to \( B(0,r) \), fixing \( B(0,r) \). By Lemma \ref{lem:6A}, \( \pi(X) \) is a surface with coboundary \( \supset L \), and since \( \pi \) is a contraction, we have \( \H^m(\pi (X\setminus A))\leq \H^m(X\setminus A) \). Applying Theorem \ref{thm:poof} to \( \pi(X) \), we get a compact surface \( Y\subset B(0,2r) \) with coboundary \( \supset L \) disjoint from \( C \), and hence also disjoint from the carrier in \( \R^n \) of \( \sigma \). It follows that \( [\sigma]\in H_{n-m}(S^n\setminus h(A)) \) is in the image of the homomorphism \( H_{n-m}(S^n\setminus h(Y))\to H_{n-m}(S^n\setminus h(A)) \) induced by the inclusion \( S^n\setminus h(Y)\to S^n\setminus h(A) \), and hence \( l \) extends over \( Y \) by naturality of the Alexander duality isomorphism, a contradiction.
	\end{proof}
	
	As a special case of Corollary \ref{cor:bddbelow}, setting \( L=L^\Z \),
	\begin{cor}
		If \( A\subset \R^n \) is a compact \( (m-1) \)-dimensional manifold and \( X\supset A \) is a compact manifold with boundary \( A \), then \( \H^m(X) \) cannot be arbitrarily small if \( A \) is fixed.
	\end{cor}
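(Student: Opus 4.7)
The plan is to derive this statement as an immediate consequence of Corollary \ref{cor:bddbelow} applied to the canonical cohomological coboundary \( L = L^\Z(A) \) from Section \ref{ssection:LR}, just as the preamble ``setting \( L = L^\Z \)'' indicates. My task reduces to checking that the hypotheses of Corollary \ref{cor:bddbelow} are met and that the resulting lower bound on \( \H^m(X\setminus A) \) translates to a bound on \( \H^m(X) \).

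First I would verify that \( X \) is a surface with coboundary \( \supset L^\Z \). Since \( A \) is an \( (m-1) \)-dimensional closed orientable manifold (the orientability hypothesis being tacit in the statement, since otherwise \( L^\Z \) would not be defined) and \( X \) is a compact manifold with boundary \( A \), Corollary \ref{cor:nonorientablespans} supplies exactly this conclusion. Second, since \( A \) is non-empty, each connected component of \( A \) contributes a generator to \( \tilde{H}^{m-1}(A;\Z) \), so \( L^\Z \neq \emptyset \). With both hypotheses of Corollary \ref{cor:bddbelow} verified, the constant \( \cal{K}_{L^\Z, A} > 0 \) it provides depends only on \( A \) (through \( L^\Z \), which in turn is determined by \( A \)) and satisfies \( \H^m(X\setminus A) \geq \cal{K}_{L^\Z, A} \) for every admissible \( X \).

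Finally, since \( A \) is an \( (m-1) \)-dimensional manifold it has vanishing \( m \)-dimensional Hausdorff measure, so
\[
\H^m(X) \;=\; \H^m(X\setminus A) + \H^m(A) \;=\; \H^m(X\setminus A) \;\geq\; \cal{K}_{L^\Z, A} \;>\; 0,
\]
giving the desired lower bound uniform in \( X \). I expect no substantive obstacle: the machinery built up in Corollary \ref{cor:bddbelow} and its predecessors (Theorem \ref{thm:poof}, Lemma \ref{lem:haircut}, Lemma \ref{lem:8}, etc.) has been packaged precisely to make this deduction automatic. The only point of care is the orientability caveat above; should one wish to handle non-orientable \( A \), one would invoke Theorem \ref{thm:manifoldspans} with \( \mathcal{R} = \Z/2\Z \) and use \( L^{\Z/2\Z} \) in place of \( L^\Z \), yielding an analogous constant \( \cal{K}_{L^{\Z/2\Z}, A} > 0 \) and the same conclusion.
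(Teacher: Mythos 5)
Your proposal is correct and matches the paper's route exactly: the paper presents this corollary as an immediate special case of Corollary~\ref{cor:bddbelow} with \( L = L^\Z \), and you correctly fill in the supporting steps (Corollary~\ref{cor:nonorientablespans} to verify the coboundary condition, non-emptiness of \( L^\Z \), and the \( \Z/2\Z \) remedy for the non-orientable case). One small remark: the intermediate claim that \( \H^m(A) = 0 \) is unnecessary (and could fail for a wild topological embedding of an \( (m-1) \)-manifold); monotonicity \( \H^m(X) \ge \H^m(X\setminus A) \) already gives the bound.
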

	
	\begin{remarks}
		The proof of Corollary \ref{cor:bddbelow} shows something slightly stronger, that if \( X \) is any surface with coboundary \( \supset L \) such that \( \pi(X) \) is compact, then \( \H^m(X\setminus A)\geq \cal{K}_{L,A} \). It may be possible to remove the assumption on compactness altogether, but it is not clear to us how to do this. Theorem \ref{thm:poof} also holds for Reifenberg surfaces with boundary, but since our proof of Corollary \ref{cor:bddbelow} relies on Alexander duality, it only works for surfaces with coboundary. The corresponding statement for Reifenberg surfaces with boundary follows from Corollary \ref{cor:closedinweaktop} below (which also works for surfaces with boundary,) weak compactness of measures and Lemma \ref{lem:21B}.
	\end{remarks}
	
	\subsection{Convergence in the Hausdorff metric}
		\label{sub:haircuts}
		
		\begin{thm}
			\label{thm:haircut}
			Suppose \( (C,A) \) is compact and that \( C \) is a Lipschitz neighborhood retract. Suppose \( 0<\l\leq \k<\i \) and that for each \( Y\in \Span(A,C,L,m) \), a Borel measure \( \mu_Y \) on \( \R^n \) is given satisfying \[ \l \H^m\lfloor_{Y\setminus A} \leq \mu_Y\leq \k\H^m\lfloor_{Y\setminus A} \] and such that for every Borel set \( \cal{B}\subset \R^n \), it holds that \( \mu_Y(\cal{B})=\mu_Z(\cal{B}) \) whenever \( (Y\setminus A)\cap \cal{B}=(Z\setminus A)\cap \cal{B} \). If \( \{X_k\}\subset \Span(A,C,L,m) \) is a sequence such that \( \mu_{X_k} \) converges weakly to a finite Borel measure \( \mu \), then there exists a sequence \( \{X_k'\}\subset \Span(A,C,L,m) \) converging to \( X\equiv\supp(\mu)\cup A \) in the Hausdorff metric, such that \( \mu_{X_k'}\) converges weakly to \( \mu \) and such that \( X_k'=\supp(\mu_{X_k'})\cup A \) for each \( k \).
		\end{thm}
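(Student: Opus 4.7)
\textbf{The plan} is to combine a diagonal application of Theorem~\ref{thm:poof} with the Lipschitz retraction \( r : \cN(C,\delta_0)\to C \) coming from the Lipschitz neighborhood retract hypothesis, and the core operation \( Y\mapsto (Y\setminus A)^*\cup A \). The essential analytic input is that for the closed set \( F_\e := C\setminus \cN(X,\e) \), which is disjoint from \( A \) and satisfies \( \mu(F_\e)=0 \) (since \( \supp(\mu)\subset X \)), the Portmanteau theorem gives \( \limsup_k \mu_{X_k}(F_\e)\le 0 \); combined with \( \mu_{X_k}\ge \l\H^m\lfloor_{X_k\setminus A} \), this yields \( \H^m(X_k\cap F_\e)\to 0 \) for every \( \e>0 \).

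For each integer \( j\ge 1 \), I would fix an open \( U_j\supset F_{1/j} \), of the form \( \cN(F_{1/j},\eta_j)\subset \cN(C,\delta_0) \), disjoint from \( A \) and with \( \eta_j \) small enough that \( U_j\cap \cN(X,1/(2j))=\emptyset \). Let \( \cal{K}_j,\cal{L}_j \) denote the constants supplied by Theorem~\ref{thm:poof}; pick \( k_j\nearrow\i \) so that \( \cal{L}_j\H^m(X_k\cap U_j)<1/j \) whenever \( k\ge k_j \), set \( j(k):=\max\{j:k\ge k_j\} \) (so \( j(k)\to\i \)), and apply Theorem~\ref{thm:poof} to \( X_k \) with parameters \( (F_{1/j(k)},U_{j(k)},1/j(k)) \). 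This produces a surface \( \tilde X_k \) with coboundary \( \supset L \), disjoint from \( F_{1/j(k)} \), equal to \( X_k \) on \( U_{j(k)}^c \), and with \( \H^m(\tilde X_k\cap U_{j(k)})<1/j(k) \). Setting \( Y_k:=r(\tilde X_k)\subset C \) (a surface with coboundary \( \supset L \) by Lemma~\ref{lem:6A}) and \( X_k':=(Y_k\setminus A)^*\cup A\in \Span(A,C,L,m) \) by Corollary~\ref{cor:corespans}, the identity \( X_k'=\supp(\mu_{X_k'})\cup A \) is immediate from the two-sided bound on \( \mu_{X_k'} \). Weak convergence \( \mu_{X_k'}\to\mu \) then follows because the symmetric difference between \( Y_k\cap E \) and \( X_k\cap E \) for Borel \( E \) is contained in the retracted image of the shell \( \tilde X_k\cap (U_{j(k)}\setminus C) \), whose \( \H^m \)-measure is controlled by \( L_r^m/j(k)\to 0 \) (with \( L_r \) the Lipschitz constant of \( r \)); combined with the vanishing tail mass on \( C\setminus \cN(X,1/(2j(k))) \) and the locality hypothesis, this gives the weak convergence.

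\textbf{The main obstacle} is Hausdorff convergence \( X_k'\to X \). The direction \( X\subset \cN(X_k',\e_k) \) with \( \e_k\to 0 \) is handled by \( A\subset X_k' \) together with \( \supp(\mu)\subset \liminf\supp(\mu_{X_k'}) \), the latter following by applying Portmanteau to open balls and using \( \mu_{X_k'}\ge \l\H^m\lfloor \). The reverse inclusion \( X_k'\subset \cN(X,\e_k) \) is the delicate part: although \( \tilde X_k\cap C\subset \cN(X,1/j(k)) \), the retracted image \( r(\tilde X_k\setminus C) \) of the thin-shell material can sit near \( F_{1/j(k)}\subset C \), at positive distance from \( X \), leaving a residual set of small \( \H^m \)-measure that survives the core operation. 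I would remove this residue by a further application of Theorem~\ref{thm:poof}, whose forbidden compact set is a neighborhood of the residue (disjoint from \( A \) since the residue is at positive distance from \( X\supset A \)); the threshold is easily satisfied because the residual \( \H^m \)-measure is arbitrarily small. Iterating this correction finitely many times --- or passing to a Hausdorff subsequential limit via Lemma~\ref{lem:21C} under a refined diagonal --- yields the required sequence. The subtle point is coordinating each correction with the original diagonal choice of \( j(k) \) so that no modification intrudes upon \( \cN(X,1/(3j(k))) \), where the measure data must be preserved, and so that the coboundary condition is maintained at every stage.
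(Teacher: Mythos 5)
Your overall strategy --- apply Theorem~\ref{thm:poof} to remove the far-from-\( X \) material, retract to \( C \), core with \( (\,\cdot\setminus A)^*\cup A \), and verify the measure and Hausdorff claims --- is the same as the paper's. However, your choice of forbidden compact set \( F_\e = C\setminus \cN(X,\e) \) leaves a genuine gap in the Hausdorff convergence, and you correctly sense this, but the patch you sketch does not clearly close it.

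The difficulty is exactly the one you flag: after Theorem~\ref{thm:poof} produces \( \tilde X_k \) disjoint from \( F_{1/j(k)} \), the shell \( \tilde X_k\setminus C \) gets retracted into a thin collar of \( \fr\,C \) inside \( C \), landing back in \( F_{1/j(k)} \). Your proposed remedy --- a second application of Theorem~\ref{thm:poof} to a small neighborhood \( U'' \) of the residue \( Y_k\cap F_{1/j(k)} \) --- is not clearly sound. The constant \( \cal{K}_{K,U'',\cdot} \) depends on the geometry of \( K \) and \( U'' \) (roughly, on the dyadic grid scale needed to separate \( K \) from the rest), and both \( K \) and \( U'' \) shrink with \( k \); without a uniform scaling argument you cannot conclude that the residual \( \H^m \)-measure (order \( \lip(r)^m/j(k) \)) falls below the shrinking threshold. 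Nor is it clear that a ``finite'' number of corrections suffices: each pass can itself push new material outside \( C \) (Theorem~\ref{thm:poof} works in an open neighborhood, not in \( C \) itself), so the iteration has no obvious termination.

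The paper avoids the whole problem by enlarging the forbidden compact to
\[
C_k \;=\; B(C,\nu_k)\setminus \cN(X,\nu_k/2),
\]
which extends a shell of thickness \( \nu_k \) \emph{outside} \( C \). After a single application of Theorem~\ref{thm:poof} with \( \e \)-parameter \( \nu_k/(2+2\lip(\pi)) \), the resulting \( Y_k \) lies in \( \cN(X_{N(k)+k},\,\nu_k/(2+2\lip(\pi)))\subset B(C,\nu_k) \) and avoids \( C_k \); together these force \( Y_k\subset \cN(X,\nu_k/2) \) \emph{before} retracting. Since the retraction displaces points of \( Y_k\setminus C \) by at most \( (1+\lip(\pi))\cdot\nu_k/(2+2\lip(\pi)) = \nu_k/2 \), the retracted and cored surface \( \hat{Y}_k \) stays inside \( \cN(X,\nu_k) \), giving Hausdorff convergence in one pass, with no residue to chase. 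If you want to keep your outline, you should replace \( F_\e \) by a compact that includes the outer shell of \( C \), as the paper does; otherwise the final correction step needs a quantitative argument that you have not supplied and that, as written, I do not believe can be made to work.
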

		
		The above theorem is also valid if we change the definition of \( \Span(A,C,L,m) \) so that for every \( X\in \Span(A,C,L,m) \), the set \( X\setminus A \) is \( m \)-rectifiable.
				
		\begin{proof}
			First, observe that if \( Y\in \Span(A,C,L,m) \), then \( \mu_Y= \mu_{(Y\setminus A)^*\cup A} \) and
			\begin{equation}
				\label{thm:haircut:aq0}
				(Y\setminus A)^*\cup A = \supp(\mu_Y)\cup A.
			\end{equation}
			Furthermore, if \( Z=(Y\setminus A)^*\cup A \), then 
			\begin{equation}
				\label{thm:haircut:aq1}
				Z=(Z\setminus A)^*\cup A.
			\end{equation}
			Thus, we may assume without loss of generality that \( X_k=\supp(\mu_{X_k})\cup A \) for each \( k \).
			
			Let \( \pi:U\to C \) be a Lipschitz retraction of an open neighborhood \( U \) of \( C \). Let \( \nu_k\to 0 \). For each \( k \) large enough so that \( \cN(C,\nu_k/(2+2\lip(\pi)))\subset U \), let \( C_k=B(C,\nu_k)\setminus \cN(X,\nu_k/2) \) and let \( U_k=\cN(C_k,\nu_k/4) \). Produce from Theorem \ref{thm:poof} the constants \( \cal{K}_{C_k,U_k,\nu_k/(2+2\lip(\pi))} \) and \( \cal{L}_{C_k,U_k,\nu_k/(2+2\lip(\pi))} \).
			
			Since \( U_k \) is disjoint from \( \cN(\supp(\mu),\nu_k/4) \) and \( \mu_{X_k} \) converges weakly to \( \mu \), there exists \( N(k)<\i \) so that if \( j\geq N(k) \) then 
			\begin{equation}
				\label{thm:haircut:eqa}
				\H^m(X_j\cap U_k)<\l^{-1}\mu_{X_j}(U_k)<\inf \left\{\frac{\nu_k}{\k \lip(\pi)^m \cal{L}_{C_k,U_k,\nu_k/(2+2\lip(\pi))}},\,\,\frac{\nu_k}{\k},\,\,\cal{K}_{C_k,U_k,\nu_k/(2+2\lip(\pi))}\right\},
			\end{equation}
			and 
			\begin{equation}
				\label{thm:haircut:eqb}
				X\subset \cN(X_j, \nu_k/4).
			\end{equation}
			Let \( Y_k \) be the set produced from \( X_{N(k)+k} \) by Theorem \ref{thm:poof}, which we may apply by \eqref{thm:haircut:eqa}. In particular,
			\begin{equation}
				\label{thm:haircut:eqc}
				Y_k\cap B(X,\nu_k/4)=X_{N(k)+k}\cap B(X,\nu_k/4),
			\end{equation}
			\begin{equation}
				\label{thm:haircut:eq0}
				Y_k\subset \cN(X_{N(k)+k},\nu_k/(2+2\lip(\pi)))\cap C_k^c\subset \cN(X,\nu_k/2)\subset U,
			\end{equation}
			and
			\begin{equation}
				\label{thm:haircut:eq1}
				\H^m(Y_k\cap C^c) \leq \H^m(Y_k\cap U_k) \leq \cal{L}_{C_k,U_k,\nu_k/2} \H^m(X_{N(k)+k}\cap U_k)<\frac{\nu_k}{\k \lip(\pi)^m}.
			\end{equation}
			It follows from Lemma \ref{lem:6A} and Corollary \ref{cor:corespans} that \( \hat{Y}_k\equiv (\pi(Y_k)\setminus A)^*\cup A \) is an element of \( \Span(A,C,L,m) \) and from \eqref{thm:haircut:aq0} and \eqref{thm:haircut:aq1} that \( \hat{Y}_k=\supp(\mu_{\hat{Y}_k})\cup A \). Moreover, by \eqref{thm:haircut:eq0} the points of \( Y_k\cap C^c \) lie no further than \( \nu_k/(2+2\lip(\pi)) \) from \( C \), so again by \eqref{thm:haircut:eq0},
			\begin{equation}
				\label{thm:haircut:eqz}
				\hat{Y}_k\subset \cN(Y_k,\nu_k/2)\subset \cN(X,\nu_k).
			\end{equation}
			
			Since \( Y_k=(Y_k\setminus A)^*\cup A \), it follows from \eqref{thm:haircut:eqc} that \( X_{N(k)+k}\cap B(X,\nu_k/4)\subset \hat{Y}_k\cap B(X,\nu_k/4) \). It follows from \eqref{thm:haircut:eqb} that \( X\subset \cN(\hat{Y}_k, \nu_k/4). \) Therefore by \eqref{thm:haircut:eqz}, the sequence \( \{\hat{Y}_k\}_k \) converges to \( X \) in the Hausdorff metric. 
			
			Observe that
			\begin{equation}
				\label{thm:haircut:eqoo}
				(\pi(Y_k\cap C^c))^c \cap (\hat{Y}_k \setminus A)\subset Y_k \cap C
			\end{equation}
			and that by the definition of \( \hat{Y}_k \) and the Vitali covering theorem,
			\begin{equation}
				\label{thm:haircut:eqo}
				\H^m(Y_k \cap C \cap (\pi(Y_k\cap C^c))^c \setminus \hat{Y}_k )=0.
			\end{equation}
			
			Suppose \( \cal{B}\subset \R^n \) is a Borel set. It follows from \eqref{thm:haircut:eq1} and \eqref{thm:haircut:eqoo} that
			\begin{equation}
				\label{thm:haircut:ezy}
				\mu_{\hat{Y}_k}(\cal{B})\leq \mu_{\hat{Y}_k}(\cal{B}\cap (\pi(Y_k\cap C^c))^c\cap C) + \k \lip(\pi)^m \H^m(Y_k\cap C^c) <\mu_{\hat{Y}_k}(\cal{B}\cap (\pi(Y_k\cap C^c))^c\cap C) + \nu_k.
			\end{equation}
			By \eqref{thm:haircut:eqc}, \eqref{thm:haircut:eq1}, and \eqref{thm:haircut:ezy},
			\begin{align}
				\mu_{\hat{Y}_k}(\cal{B}) &< \mu_{\hat{Y}_k}(\cal{B}\cap (\pi(Y_k\cap C^c))^c\cap C) + \nu_k \notag\\
				&=\mu_{\hat{Y}_k}(\cal{B}\cap (\pi(Y_k\cap C^c))^c\cap C \cap U_k) + \mu_{X_{N(k)+k}}(\cal{B} \cap U_k^c) + \nu_k \notag\\
				&\leq \k\H^m(Y_k\cap U_k) + \mu_{X_{N(k)+k}}(\cal{B}) + \nu_k \notag\\
				&\leq 2\nu_k + \mu_{X_{N(k)+k}}(\cal{B}).\label{thm:haircut:eqt}
			\end{align}
			In particular, this proves that \( \sup_k \{ \mu_{\hat{Y}_k}(\R^n)\}<\i \), and so there exists a subsequence \( k_i\to \i \) so that \( \mu_{\hat{Y}_{k_i}} \) converges weakly to a finite Borel measure \( \eta \). It remains to show that \( \eta=\mu \).

			Let \( \cal{Q} \) be a dyadic subdivision of \( \R^n \) such that for each cube \( Q\in\cal{Q} \), \( \eta(\fr\, Q)=\mu(\fr\, Q)=0 \). Let \( Q\in\cal{Q} \). By the Portmanteau theorem and \eqref{thm:haircut:eqt}, \[ \eta(Q)=\lim_{i\to\infty} \mu_{\hat{Y}_{k_i}}(Q) \leq \lim_{i\to\infty} 2\nu_{k_i}+\mu_{X_{N(k_i)+k_i}}(Q)=\mu(Q). \] Likewise, by \eqref{thm:haircut:eqa}, \eqref{thm:haircut:eqc}, \eqref{thm:haircut:eq1} and \eqref{thm:haircut:eqo},
			\begin{align*}
				\mu(Q) &= \lim_{i\to\infty} \mu_{X_{N(k_i)+k_i}}(Q)\\
				&= \lim_{i\to\infty}\mu_{X_{N(k_i)+k_i}}(Q\cap B(X,\nu_{k_i}/4)) + \mu_{X_{N(k_i)+k_i}}(Q\cap U_{k_i})\\
				&\leq \liminf_{i\to\infty}\mu_{\hat{Y}_{k_i}}(Q\cap B(X,\nu_{k_i}/4) \cap (\pi(Y_{k_i}\cap C^c))^c\cap C) + \k(1+\lip(\pi)^m)\H^m(Y_{k_i}\cap C^c) + \nu_{k_i}\\
				&\leq \lim_{i\to\infty}\mu_{\hat{Y}_{k_i}}(Q)+3\nu_{k_i}\\
				&= \eta(Q).
			\end{align*}
			If \( W\subset \R^n \) is open, by taking a Whitney decomposition of \( W \) using cubes from \( \cal{Q} \) we conclude that \( \mu(W)=\eta(W) \). Since both measures are Radon, outer regularity proves they are equal, and so the sequence \( X_i'\equiv \hat{Y}_{k_i} \) has the desired properties.
		\end{proof}
		
		Lemma \ref{lem:21C} implies
		\begin{cor}
			\label{cor:closedinweaktop}
			Under the assumptions of Theorem \ref{thm:haircut}, the set \( X \) is a surface with coboundary \( \supset L \).
		\end{cor}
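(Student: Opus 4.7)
The plan is to combine Theorem \ref{thm:haircut} with Lemma \ref{lem:21C} in a single short step. Theorem \ref{thm:haircut} already supplies the delicate construction: starting from the given sequence $\{X_k\} \subset \Span(A,C,L,m)$ whose measures $\mu_{X_k}$ converge weakly to $\mu$, it produces a modified sequence $\{X_k'\} \subset \Span(A,C,L,m)$ that converges to $X \equiv \supp(\mu) \cup A$ \emph{in the Hausdorff metric}. This is exactly the hypothesis needed to invoke Lemma \ref{lem:21C}.

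So the proof is essentially a one-liner: each $X_k'$ is, by membership in $\Span(A,C,L,m)$, a compact surface with coboundary $\supset L$; since $X_k' \to X$ in the Hausdorff metric and $X$ is compact (being the union of $A$ with the support of a finite Borel measure, both compact), Lemma \ref{lem:21C} yields that $X$ itself is a surface with coboundary $\supset L$.

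There is no real obstacle here; all the work has been done upstream. The only thing worth double-checking is that $X$ is compact so that the hypotheses of Lemma \ref{lem:21C} are genuinely satisfied: $A$ is compact by assumption, and $\supp(\mu) \subset C$ is closed and bounded (since $C$ is compact and all $X_k' \subset C$), so $\supp(\mu)$ is compact and hence so is $X$. I would state the corollary's proof in two sentences: one citing Theorem \ref{thm:haircut} to get Hausdorff convergence, and one citing Lemma \ref{lem:21C} to pass the coboundary condition to the limit.
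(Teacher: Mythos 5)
Your proof is correct and is exactly the paper's argument: the paper simply prefaces the corollary with ``Lemma \ref{lem:21C} implies,'' relying on Theorem \ref{thm:haircut} to supply the Hausdorff-convergent sequence $\{X_k'\} \subset \Span(A,C,L,m)$ and on Lemma \ref{lem:21C} to pass the coboundary condition to the limit. Your extra check that $X = \supp(\mu)\cup A$ is compact is a reasonable detail to spell out, but it adds nothing beyond what is implicit in the paper.
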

		
\section{Minimizing sequences}
	\label{sec:minimizing_sequences}
	\setcounter{thm}{0}
	\subsection{H\"older densities}
		\label{sec:elliptic_integrands}

		For the rest of this paper, assume that \( (C,A) \) is compact, \( m\geq 2 \), and that \( L\neq \emptyset \). Assume also that \( C \) is a Lipschitz neighborhood retract\footnote{Indeed, ultimately we will assume that \( C \) is convex, but the results in this section hold in greater generality.} (letting \( \pi:U\to C \) denote the retraction) and \( f:C\to [a,b] \) is \( \a \)-H\"older, with \( 0<a\leq b<\i \). Finally, assume that \( \Span(A,C,L,m) \) is non-empty (e.g. if \( \H^{m-1}(A)<\i \) and \( p\in \R^n \), then by Lemmas \ref{lem:2A} and \ref{lem:6A} the image under the map \( \phi \) of the cone \( C_p A \) is a surface with cobounadry \( \supset L \).)  

		For the sake of notation, extend \( f \) to all of \( \R^n \) by setting \( f\equiv 0 \) on \( C^c \) and again let \( \F^m \) denote the measure \( f\sp^m \) on \( \R^n \).
		
		\begin{defn}
			\label{def:minimizingseq}
			Let \[ \frak{m} \equiv \inf \{ \F^m(X\setminus A) : X\subset C \textrm{ is a compact surface with coboundary }\supset L \}. \] By Corollary \ref{cor:bddbelow}, \( \frak{m}>0 \). If \( \{X_k\}_{k\in\N}\subset \Span(A,C,L,m) \) such that 
			\begin{enumerate}
				\item \( X_k=\supp(\F^m\lfloor{X_k\setminus A})\cup A \) for all \( k \),
				\item \( \F^m(X_k\setminus A)\to \frak{m} \),
				\item \( \{\F^m\lfloor{X_k\setminus A}\} \) converges weakly to a finite Borel measure \( \mu_0 \), and
				\item \( \{X_k\} \) converges to \( X_0\equiv \supp(\mu_0)\cup A \) in the Hausdorff metric,
			\end{enumerate}
			then we say \( \{X_k\} \) is a \emph{\textbf{convergent minimizing sequence}}. 
		\end{defn}
	
		By Lemma \ref{lem:21C}, if \( \{X_k\} \) is a convergent minimizing sequence, then \( X_0 \) is a compact surface with coboundary \( \supset L \), contained in \( C \), and \( \mu_0(\R^n)=\frak{m} \). It follows from the Riesz representation theorem, the Banach-Alaoglu theorem, and Theorem \ref{thm:haircut} that there exists a convergent minimizing sequence.
	
		We will show, in the following order, that if \( \{X_k\}_{k\in\N}\subset \Span(A,C,L,m) \) is a convergent minimizing sequence, then:
		\begin{enumerate}
			\item There exists a subsequence of \( \{X_k\} \) which is Reifenberg regular (see Definition \ref{def:rregular} below,)
			\item The lower density of \( \mu_0 \) is bounded away from zero, uniformly across all points in \( X_0\setminus A \),
			\item \( \H^m(X_0\setminus A)<\i \), hence \( X_0 \in \Span(A,C,L,m) \),
			\item The density of \( \mu_0 \) exists, is non-zero and is finite for every \( p\in X_0\setminus A \), hence
			\item \( X_0\setminus A \) is \( m \)-rectifiable. Finally, we prove
			\item \( \F^m(X_0\setminus A)=\frak{m} \).
		\end{enumerate}

	\subsection{Reifenberg regular sequences}
		\label{sub:lower_bounds_on_density_ratios}
		For every set \( X\subset \R^n \), and \( W\subset \R^n \) let \( \G(X,W,R) \) denote the set of closed balls whose centers lie in \( X \), that are contained in \( W \), and whose radii are bounded above by \( R \).

		\begin{defn}
			\label{def:rregular}
			Let \( 0<c<\i \) and \( 0<R\leq \i \). A sequence \( \{Y_k\} \) of subsets of \( \R^n \) is \emph{\textbf{Reifenberg (\( (c,R) \)-)regular in \( W \) (in dimension \( m \))}} if \[ \H^m(Y_k(p,r)) \ge c r^m \] for all \( k \ge 1 \), \( r > 2^{-k} \) and \( B(p,r) \in \G(Y_k,W,R) \). If \( \{Y_k\} \) is Reifenberg \( (c,R) \)-regular in \( W \) for some \( 0<c<\i \) and \( 0<R\leq \i \), we say that \( \{Y_k\} \) is simply \emph{\textbf{Reifenberg regular}} in \( W \).
		\end{defn}
		
		Note that the property of being Reifenberg \( (c,R) \)-regular is stable under taking subsequences. We show in the next two lemmas that there exists a constant \( 0<\mathbf{c}<\i \) depending on \( n, a, b \) and \( C \), and a constant \( \mathbf{R} \) depending on \( C \) such that every convergent minimizing sequence \( \{X_k\}_{k\in\N}\subset \Span(A,C,L,m) \) has a subsequence which is Reifenberg \( (\mathbf{c},\mathbf{R}) \)-regular in \( A^c \).
		
		Let \( \mathbf{R}>0 \) be small enough so that for any \( x\in C \), the ball of radius \( R \) about \( x \) is contained in \( U \). Let \( 0<M<\i \) be a constant large enough so that
		\begin{equation}
			\label{lem:epsilonbound:eq:a}
			2^{M+m-2}\geq \frac{b}{a}\lip(\pi)^m.
		\end{equation}
						
		\begin{lem}
			\label{lem:epsilonbound}
			If \(\{X_k\}_{k\in\N}\subset \Span(A,C,L,m) \) is a convergent minimizing sequence, then there exists a subsequence \( k_i\to\i \) so that
			\begin{equation}
				\label{lem:epsilonbound:2}
				\sp^m(X_{k_i}(p,r)) > \frac{2^{-(i+1)m}}{2\left(2^M m\right)^m\left(2^{n+M-1} K_m^n\right)^{m-1}} \,\,\, \mbox{ for all }\,\,\, B(p,r) \in \G(X_{k_i},A^c,\mathbf{R}) \mbox{ and } r>2^{-(i+1)}.
			\end{equation}
		\end{lem}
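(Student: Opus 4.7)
The plan is to argue by contradiction, using Lemma \ref{lem:haircut} composed with the retraction $\pi$ to produce a competitor whose $\F^m$ is strictly less than that of $X_k$ by a quantity controlled from below by $\sp^m(X_k(p,r'))$ for some $r'\in(r/2^M,r)$. Fix $i$ and suppose for contradiction that infinitely many indices $k$ admit a ``bad'' ball $B(p_k,r_k)\in\G(X_k,A^c,\mathbf R)$ with $r_k>2^{-(i+1)}$ and
\[
\sp^m(X_k(p_k,r_k))\le\frac{2^{-(i+1)m}}{2(2^M m)^m(2^{n+M-1}K_m^n)^{m-1}}\le\frac{r_k^m}{2(2^M m)^m(2^{n+M-1}K_m^n)^{m-1}}.
\]
This is precisely the hypothesis of Lemma \ref{lem:haircut}, which furnishes $r_k'\in(r_k/2^M,r_k)$ and a compact surface $\hat X_k$ with coboundary $\supset L$ agreeing with $X_k$ off $B(p_k,r_k')$ and satisfying $\sp^m(\hat X_k(p_k,r_k'))\le 2^{-(M+m-1)}\sp^m(X_k(p_k,r_k'))$.

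Next I would push $\hat X_k$ back into $C$ via $\pi$. Since $r_k'\le\mathbf R$, the ball $B(p_k,r_k')$ lies in $U$, so $\pi(\hat X_k)$ is well defined; because $\pi$ restricts to the identity on $A$, Lemma \ref{lem:6A} shows $\pi(\hat X_k)$ is a compact surface with coboundary $\supset L$ contained in $C$ with $\H^m(\pi(\hat X_k)\setminus A)<\infty$, hence $\pi(\hat X_k)\in\Span(A,C,L,m)$. Outside $B(p_k,r_k')$ it coincides with $X_k$, while its restriction to $B(p_k,r_k')$ has $\sp^m$-measure at most $\lip(\pi)^m\sp^m(\hat X_k(p_k,r_k'))$. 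Using $B(p_k,r_k')\cap A=\emptyset$, $a\le f\le b$, and the choice of $M$ in \eqref{lem:epsilonbound:eq:a},
\[
\F^m(\pi(\hat X_k)\setminus A)-\F^m(X_k\setminus A)\le\Bigl(\frac{b\lip(\pi)^m}{2^{M+m-1}}-a\Bigr)\sp^m(X_k(p_k,r_k'))\le-\tfrac{a}{2}\sp^m(X_k(p_k,r_k')).
\]
Minimality then forces $\F^m(\pi(\hat X_k)\setminus A)\ge\frak{m}$, which combined with $\F^m(X_k\setminus A)\to\frak{m}$ drives $\sp^m(X_k(p_k,r_k'))\to 0$ along the bad indices.

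To close the contradiction, I would extract from the bad indices a further subsequence for which $p_k\to p^*\in C$ and $r_k'\to r_*'\in[2^{-(i+1+M)},\mathbf R]$. The inequality $d(p_k,A)\ge r_k>2^{-(i+1)}$ passes to the limit, so $p^*\notin A$, and Hausdorff convergence $X_k\to X_0$ puts $p^*$ in $X_0\setminus A\subset\supp(\mu_0)$, forcing $\mu_0(\mathring{B}(p^*,\rho))>0$ for every $\rho>0$. On the other hand, for any $\delta>0$ one has $\mathring{B}(p^*,r_*'-\delta)\subset B(p_k,r_k')$ for all $k$ large enough, and Portmanteau applied to the weak convergence $\F^m\lfloor_{X_k\setminus A}\to\mu_0$ gives $\mu_0(\mathring{B}(p^*,r_*'-\delta))\le\liminf_k b\sp^m(X_k(p_k,r_k'))=0$. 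Letting $\delta\downarrow 0$ yields $\mu_0(\mathring{B}(p^*,r_*'))=0$, contradicting $p^*\in\supp(\mu_0)$. Thus at each level $i$ only finitely many indices are bad, and an increasing choice of good indices produces the subsequence $k_i\to\infty$. The main subtlety is the haircut-then-project estimate: inequality \eqref{lem:epsilonbound:eq:a} is engineered precisely to absorb the factor $\lip(\pi)^m$ incurred when the replacement cap is pushed back into $C$.
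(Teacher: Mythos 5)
Your proof is correct and uses the same core argument as the paper: haircut off a small cap via Lemma \ref{lem:haircut}, push back into $C$ with $\pi$, use the choice of $M$ to extract a definite $\F^m$-savings, and reach a contradiction by combining near-minimality with the Portmanteau theorem and the fact that the cluster point lies in $\supp(\mu_0)\setminus A$. The only difference is organizational: the paper fixes a small ball $B(p,r)$ with $r<2^{-N_2-1-M}$ (automatically contained in all the haircut balls) to bound $\F^m$ of the cap from below, whereas you additionally extract convergence of the radii $r_k'\to r_*'$ and show $\mu_0(\mathring B(p^*,r_*'))=0$; both versions are sound.
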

			
		\begin{proof}
			If not, there exist \( N_1<\i \) and \( N_2 >0 \) such that for all \( k\geq N_1\), there exists \( B(p_k,r_k)\in \G(X_k,A^c,\mathbf{R}) \) with 
			\begin{equation}
				\label{eq:rkawayfromA}
				r_k>2^{-(N_2+1)}
			\end{equation}
			and
			\begin{equation}
				\label{eq:neededforhaircut}
				\sp^m(X_k(p_k,r_k)) \leq \frac{2^{-(N_2+1)m}}{2\left(2^M m\right)^m\left(2^{n+M-1} K_m^n\right)^{m-1}}.
			\end{equation}
			Since \( X_k \) converges to \( X_0 \) in the Hausdorff metric, there exists a point \( p\in X_0 \) and a subsequence \( p_{k_j}\to p \). By \eqref{eq:rkawayfromA}, the distance from \( p_k \) to \( A \) is bounded below, hence \( p\notin A \). 
						
			By Lemma \ref{lem:haircut}, there exist \( 2^{-N_2-1-M} < r_{k_j}' < r_{k_j} \) and a compact surface \( Y_j \) with coboundary \( \supset L \) satisfying 
			\begin{equation}
				\label{eq:epbound0}
				Y_j \cap (\cN(p_{k_j},r_{k_j}')^c) = X_{k_j} \cap (\cN(p_{k_j},r_{k_j}')^c)
			\end{equation}
			and
			\begin{equation}
				\label{eq:epbound17}
				\sp^m(Y_j(p_{k_j},r_{k_j}')) \leq \frac{1}{2^{M+m-1}} \sp^m(X_{k_j}(p_{k_j},r_{k_j}')).
			\end{equation}
			
			Let \( Z_j=\pi(Y_j(p_{k_j},r_{k_j}')) \). Then by \eqref{lem:epsilonbound:eq:a} and \eqref{eq:epbound17},
			\begin{equation}
				\label{eq:epbound1}
				\F^m(Z_j) \leq \frac{1}{2} \F^m(X_{k_j}(p_{k_j},r_{k_j}')).
			\end{equation}
			Since \( \mu_0(\R^n)<\i \), we may fix \( 0<r<2^{-N_2-1-M} \) so that \( \mu_0(\fr\, B(p,r))=0 \). For \( j \) sufficiently large, \( B(p,r)\subset B(p_{k_j},r_{k_j}') \). The Portmanteau theorem gives \[ \lim_{j\to\i} \F^m(X_{k_j}(p,r))=\mu_0(B(p,r)). \] Since \( p\in \supp(\mu_0) \),
			\begin{equation}
				\label{eq:epbound2}
				0 < \frac{1}{2} \mu_0(B(p,r)) < \F^m(X_{k_j}(p,r)) \leq \F^m(X_{k_j}(p_{k_j}, r_{k_j}'))
			\end{equation}
			for \( j \) sufficiently large. Using \eqref{eq:epbound0}, \eqref{eq:epbound1} and \eqref{eq:epbound2} we deduce
			\begin{align*}
				\F^m(\pi(Y_j)\setminus A)&\leq \F^m(Z_j)+\F^m(X_{k_j}\cap (B(p_{k_j},r_{k_j}')^c)\cap (A^c))\\
				&\le \frac{1}{2} \F^m(X_{k_j}(p_{k_j},r_{k_j}')) + \F^m(X_{k_j}\cap B(p_{k_j},r_{k_j}')^c \cap A^c)\\
				&=\F^m(X_{k_j}\setminus A)-\frac{1}{2} \F^m(X_{k_j}(p_{k_j},r_{k_j}'))\\
				&<\F^m(X_{k_j}\setminus A)-\frac{1}{4} \mu_0(B(p,r)).
			\end{align*}
			Since \( \F^m(X_{k_j}\setminus A)\to \frak{m} \), we have \( \F^m(\pi(Y_j)\setminus A)<\frak{m} \) for \( j \) sufficiently large, a contradiction since \( \pi(Y_j)\in\Span(A,C,L,m) \).
		\end{proof}
	
		\begin{lem}
			\label{lem:prelrmin}
			There exists \( 0<\mathbf{c}<\i \) depending only on \( n, a, b \) and \( C \) such that if \( \{X_k\}_{k\in\N}\subset \Span(A,C,L,m) \) is a convergent minimizing sequence, then there exists a subsequence \( k_i \to \i \) such that \( \{X_{k_i}\} \) is Reifenberg \( (\mathbf{c},\mathbf{R}) \)-regular in \( A^c \).
		\end{lem}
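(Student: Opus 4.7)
The plan is to upgrade Lemma \ref{lem:epsilonbound} to the sharp, $r$-scaled lower bound
\begin{equation}\label{planeq1}
\sp^m(X_{k_i}(p,r)) \geq C_0\, r^m \quad \textrm{for all } B(p,r)\in\G(X_{k_i},A^c,\mathbf{R}) \textrm{ with } r>2^{-(i+1)},
\end{equation}
where $C_0 := \bigl(2(2^M m)^m(2^{n+M-1}K_m^n)^{m-1}\bigr)^{-1}$, after which Reifenberg regularity is immediate. The proof of \eqref{planeq1} is essentially the argument of Lemma \ref{lem:epsilonbound} with one change: the threshold in Lemma \ref{lem:haircut} is $\sp^m(X(p,r))\leq C_0 r^m$, so one formulates the ``bad ball'' hypothesis as $\sp^m(X_k(p_k,r_k))<C_0 r_k^m$ rather than as the weaker $\sp^m\leq C_0\cdot 2^{-(N_2+1)m}$. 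The rest of the scheme is unchanged.

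Concretely, if \eqref{planeq1} failed for every subsequence, one would extract indices $N_1,N_2$ and, for each $k\geq N_1$, a ball $B(p_k,r_k)\in\G(X_k,A^c,\mathbf{R})$ with $r_k>2^{-(N_2+1)}$ and $\sp^m(X_k(p_k,r_k))<C_0 r_k^m$. Passing to a further subsequence with $p_{k_j}\to p$ and $r_{k_j}\to r_0\geq 2^{-(N_2+1)}$ places $p$ outside $A$, so $p\in\supp(\mu_0)$. The bad-ball inequality is precisely the hypothesis of Lemma \ref{lem:haircut}, yielding $r_{k_j}'\in(r_{k_j}/2^M,r_{k_j})$ and a competitor $\hat X_{k_j}$ with $\sp^m(\hat X_{k_j}(p_{k_j},r_{k_j}'))\leq 2^{-(M+m-1)}\sp^m(X_{k_j}(p_{k_j},r_{k_j}'))$. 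Post-composing with $\pi$ and invoking \eqref{lem:epsilonbound:eq:a} contracts the $\F^m$-mass inside the ball by at least a factor of $2$; fixing $\rho\in(0,r_0/2^M)$ with $\mu_0(\fr\, B(p,\rho))=0$ and applying the Portmanteau theorem (using $p\in\supp(\mu_0)$) produces a strictly positive, $j$-independent lower bound on $\F^m(X_{k_j}(p_{k_j},r_{k_j}'))$. Then $\F^m(\pi(\hat X_{k_j})\setminus A)$ falls strictly below $\frak m$ for large $j$, contradicting $\pi(\hat X_{k_j})\in\Span(A,C,L,m)$.

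Given \eqref{planeq1}, since $\sp^m\leq \sigma_m\H^m$ for a dimensional constant $\sigma_m$ and $r>2^{-i}$ implies $r>2^{-(i+1)}$, one obtains $\H^m(X_{k_i}(p,r))\geq(C_0/\sigma_m)r^m$ for every $B(p,r)\in\G(X_{k_i},A^c,\mathbf{R})$ with $r>2^{-i}$, which is exactly Reifenberg $(\mathbf{c},\mathbf{R})$-regularity for $\mathbf{c}:=C_0/\sigma_m$. The constants $C_0$ and $M$ depend only on $n$, $m$, $a$, $b$, and $\lip(\pi)$; hence $\mathbf{c}$ depends only on $n$, $a$, $b$, and $C$. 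The only subtle point, and the main thing to notice, is that the haircut/projection machinery of Lemma \ref{lem:epsilonbound} actually delivers the $r$-scaled bound directly once one retains $r^m$ (rather than the fixed value $2^{-(N_2+1)m}$) when verifying the hypothesis of Lemma \ref{lem:haircut}; no packing, covering, or monotonicity argument is required beyond those already present.
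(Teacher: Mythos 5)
Your proposal is correct, and it genuinely streamlines the paper's two-step argument. The paper first proves Lemma \ref{lem:epsilonbound}, whose conclusion $\sp^m(X_{k_i}(p,r)) > C_0\,2^{-(i+1)m}$ is deliberately weaker than an $r$-scaled bound, and then in Lemma \ref{lem:prelrmin} upgrades it to the $r$-scaled density estimate by an entirely separate mechanism: slicing (Lemma \ref{lem:4}), the cone/isoperimetric competitor of Lemma \ref{lem:8}, the near-minimality hypothesis $\F^m(X_k\setminus A)\le \frak m + a\,C_0\,2^{-(k+1)m-1}$, and a differential inequality in $s\mapsto \int_0^s\sp^{m-1}(x_{k_i}(p,t))\,dt$ that is integrated from $2^{-(i+1)}$ to $r$. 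You observe instead that the negation of the $r$-scaled claim hands you precisely the hypothesis $\sp^m(X_k(p_k,r_k)) < C_0\, r_k^m$ of the haircut Lemma \ref{lem:haircut} (the paper's bad-ball hypothesis $\sp^m\le C_0\,2^{-(N_2+1)m}$ was strictly stronger than needed, because $r_k>2^{-(N_2+1)}$), and then the rest of the contradiction in Lemma \ref{lem:epsilonbound} --- haircut, projection by $\pi$, the mass-halving via \eqref{lem:epsilonbound:eq:a}, Portmanteau at the limit point $p\in\supp(\mu_0)$ --- runs verbatim. This eliminates the monotonicity step and the auxiliary rate assumption on $\F^m(X_k\setminus A)$ entirely. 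The logical structure of the subsequence extraction (negation: some fixed $N_2$ works for all large $k$) is the same as in the paper, and you correctly convert spherical to Hausdorff measure at the end with $\sp^m\le 2^m\H^m$, a conversion the paper's own proof actually omits. The trade-off is quantitative: the paper's $\mathbf{c}=(2m)^{-m}(2\lip(\pi)^m\tfrac{b}{a}K_m^n)^{-(m-1)}$ is larger than your $C_0/2^m$ by an exponential-in-$n$ factor, so the monotonicity route yields a better constant, but for the statement as given your shorter argument suffices. One small point of hygiene: the choice $\rho\in(0,r_0/2^M)$ presumes convergence of $r_{k_j}$, which you do not actually need; simply take $\rho<2^{-N_2-1-M}$, which guarantees $B(p,\rho)\subset B(p_{k_j},r'_{k_j})$ for $j$ large using only $r'_{k_j}>r_{k_j}/2^M>2^{-N_2-1-M}$.
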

		
		\begin{proof}
			Since \( \F^m(X_k\setminus A)\to \frak{m} \), let us assume without loss of generality that
			\begin{equation}
				\label{eq:prel1}
				\F^m(X_k\setminus A)\leq \frak{m} + a\frac{2^{-(k+1)m-1}}{2\left(2^M m\right)^m\left(2^{n+M-1} K_m^n\right)^{m-1}}.
			\end{equation}
			Let \( \{X_{k_i}\} \) be the subsequence determined by Lemma \ref{lem:epsilonbound}. We will show that this subsequence is Reifenberg regular. By Lemma \ref{lem:4} it suffices to find constants \( \mathbf{c} > 0 \) and \( 0< \mathbf{R}\leq \i \) such that \[ \int_0^r \sp^{m-1} (x_{k_i}(p,t)) \,dt \geq \mathbf{c} r^m \] for all \( i\geq 1 \), \( r > 2^{-i} \) and \( B(p,r) \in \G(X_{k_i},A^c,\mathbf{R}) \). 
				
			Fix \( i\geq 1 \), \( r>2^{-i} \) and \( B(p,r)\in \G(X_{k_i},A^c,\mathbf{R}) \). For every \( s\in(2^{-(i+1)},r) \), we have by Lemma \ref{lem:epsilonbound},
			\begin{equation}
				\label{eq:prel2}
				\F^m(X_{k_i}(p,s)) > a\frac{2^{-(i+1)m}}{2\left(2^M m\right)^m\left(2^{n+M-1} K_m^n\right)^{m-1}}.
			\end{equation} 

			 Let \( Z_{k_i} \) denote the image under the retraction \( \pi:U\to C \) of the set obtained from Lemma \ref{lem:8} applied to \( x_{k_i}(p,s) \). In particular,
			\begin{equation}
				\label{prop:prel5}
				\F^m(Z_{k_i}) \leq b\lip(\pi)^m K_m^n \,(\sp^{m-1}(x_{k_i}(p,s)))^{m/(m-1)}.
			\end{equation}
 			Lemmas \ref{lem:13} and \ref{lem:6A} imply \( \F^m(Z_{k_i}\cup (X_{k_i}\setminus B(p,s)))\geq \frak{m} \), and since \( i\leq k_i \), it follows from \eqref{eq:prel1} that
			\begin{equation}
				\label{eq:prel}
				\F^m(X_{k_i}(p,s)) - a\frac{2^{-(i+1)m-1}}{2\left(2^M m\right)^m\left(2^{n+M-1} K_m^n\right)^{m-1}} \leq \F^m(Z_{k_i}). 
			\end{equation}
				
			Thus, by Lemma \ref{lem:4}, \eqref{eq:prel2}, \eqref{eq:prel}, and \eqref{prop:prel5},
			\begin{align*}
				a\int_0^s \sp^{m-1} (x_{k_i}(p,t)) \,dt &\leq \F^m((X_{k_i}(p,s)) \\
				&< 2\F^m(X_{k_i}(p,s)) - a\frac{2^{-(i+1)m}}{\left(2^M m\right)^m\left(2^{n+M-1} K_m^n\right)^{m-1}} \\
				&\leq  2\F^m(Z_{k_i}) \\
				&\leq 2b\lip(\pi)^m K_m^n \,(\sp^{m-1}(x_{k_i}(p,s)))^{m/(m-1)}.
			\end{align*}
			In other words, by the Lebesgue Differentiation Theorem, for almost every \( s\in(2^{-(i+1)},r), \) we have
			\[ \frac{\frac{d}{ds}\int_0^s \sp^{m-1} (x_{k_i}(p,t)) \,dt}{\left(\int_0^s \sp^{m-1} (x_{k_i}(p,t)) \,dt\right)^{(m-1)/m}}\geq \left(2\lip(\pi)^m\frac{b}{a}K_m^n\right)^{-(m-1)/m}. \] 
 
			Integrating, this implies \[ \left(\int_0^r\sp^{m-1} (x_{k_i}(p,t))\,dt\right)^{1/m}-\left(\int_0^{2^{-(i+1)}}\sp^{m-1} (x_{k_i}(p,t))\,dt\right)^{1/m} \ge \frac{r-2^{-(i+1)}}{m}\left(2\lip(\pi)^m\frac{b}{a}K_m^n\right)^{-(m-1)/m}, \] and since \( r\geq 2^{-i} \),
				
			\[ \int_0^r \sp^{m-1} (x_{k_i}(p,t))	\,dt \geq \frac{r^m}{(2m)^m\left(2\lip(\pi)^m\frac{b}{a}K_m^n\right)^{m-1}}. \]
			
			Therefore, the constant \( \mathbf{c}=(2m)^{-m}\left(2\lip(\pi)^m\frac{b}{a}K_m^n\right)^{-(m-1)} \) satisfies the desired conditions.
		\end{proof}

		\begin{remark}
			Lemmas \ref{lem:epsilonbound} and \ref{lem:prelrmin} hold for sequences \( \{X_k\} \) which minimize measures more general than \( \F^m \), in particular those of the form described in Theorem \ref{thm:haircut}.
		\end{remark}
		
	 \subsection{General properties of Reifenberg regular sequences}
		\begin{defn}
			\label{def:beta}
			For \( \{Y_k\}_{k \in \N} \) a sequence of subsets of \( \R^n \), subsets \( Y_0, W \) of \( \R^n \), and \( 0<R\leq \i \), let \[ \b(\{Y_k\},Y_0,W,R,m) \equiv \inf\left\{\liminf_{k \to \i} \frac{\H^m(Y_k(p,r))}{\a_m r^m} : B(p,r) \in \G(Y_0,W,R) \right\}. \] If \( \G(Y_0,W,R) \) is empty, let \( \b(\{Y_k\},Y_0,W,R,m)=0 \).
		\end{defn}  

		\begin{prop}
			\label{prop:lowerbound}
			Let \( Y_0 \) and \( W \) be subsets of \( \R^n \). If \( \{Y_k\}_{k\in \N} \) is a sequence of subsets of \( \R^n \) which is Reifenberg \( (c,R) \)-regular in \( W \), and every \( p\in Y_0\cap W \) is the limit of a sequence \( p_k\subset Y_k \), then \[ \b(\{Y_k\}, Y_0, W,R,m) \geq c/\a_m > 0. \]
		\end{prop}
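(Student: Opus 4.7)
The plan is to reduce the claim to a direct application of the Reifenberg regularity hypothesis, by approximating each ball centered on $Y_0$ with slightly smaller balls centered on $Y_k$. Assuming $\G(Y_0,W,R)$ is non-empty (otherwise the assertion is essentially about the definitional convention, and we can note that the case considered must have $\G(Y_0,W,R)\neq \emptyset$), fix some $B(p,r)\in \G(Y_0,W,R)$. Since $p\in Y_0\cap W$, the hypothesis furnishes a sequence $p_k\in Y_k$ with $p_k\to p$.

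Next I would fix an arbitrary $\e\in(0,r)$ and verify that for all sufficiently large $k$, the closed ball $B(p_k,r-\e)$ lies in $\G(Y_k,W,R)$ and satisfies $r-\e>2^{-k}$. Indeed, for $k$ large we have $|p_k-p|<\e$, so $B(p_k,r-\e)\subset B(p,r)\subset W$; the radius satisfies $r-\e<r\leq R$; and eventually $r-\e>2^{-k}$. Reifenberg $(c,R)$-regularity then yields
\begin{equation*}
\H^m(Y_k(p_k,r-\e))\geq c(r-\e)^m.
\end{equation*}
Because $B(p_k,r-\e)\subset B(p,r)$, monotonicity of $\H^m$ gives $\H^m(Y_k(p,r))\geq c(r-\e)^m$.

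Finally I would take $\liminf_{k\to\i}$ in the inequality above and then let $\e\downarrow 0$ to obtain $\liminf_k \H^m(Y_k(p,r))\geq c r^m$. Dividing by $\a_m r^m$ and taking the infimum over $B(p,r)\in \G(Y_0,W,R)$ produces the desired bound $\b(\{Y_k\},Y_0,W,R,m)\geq c/\a_m$, which is strictly positive since $c>0$.

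There is no real obstacle here beyond careful bookkeeping; the only subtle point is ensuring that the slightly shrunken balls $B(p_k,r-\e)$ meet every requirement to invoke Reifenberg regularity (center in $Y_k$, ball contained in $W$, radius at most $R$, and radius exceeding $2^{-k}$), all of which follow once $k$ is large enough. The limit $\e\downarrow 0$ is what converts regularity at centers in $Y_k$ into a lower bound valid at centers in the limit set $Y_0$.
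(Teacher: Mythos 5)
Your proof is correct and takes essentially the same route as the paper: replace $B(p,r)$ by slightly smaller balls $B(p_k, r-\e)$ centered in $Y_k$, invoke Reifenberg $(c,R)$-regularity (noting the radius eventually exceeds $2^{-k}$), use monotonicity of $\H^m$, then take $\liminf_k$ and let $\e\downarrow 0$. The paper's proof parametrizes the shrinking radius as $\d\cdot r$ with $\d\uparrow 1$ rather than $r-\e$ with $\e\downarrow 0$, but the argument is otherwise identical.
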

		
		\begin{proof}
			Suppose \( B(p,r) \in \G(Y_0,W,R) \) and \( 0<\d<1 \). For sufficiently large \( k \), there exists \( B(p_k,r_k)\in \G(Y_k,W,R) \) such that \[ 2^{-k}<\d \cdot r < r_k < r \] and \( B(p_k,r_k)\subset B(p,r) \). By Definition \ref{def:rregular}, \[ \H^m(Y_k(p,r))\geq  \H^m(Y_k(p_k,r_k))\geq c\, r_k^m >c\, \d^m\, r^m, \] and thus \( \b(\{Y_k\}, Y_0, W,R,m)\geq c \,\d^m/\a_m \). Now let \( \d \to 1 \).
		\end{proof}

		If \( \mu \) is a Borel measure, denote its upper and lower \( m \)-dimensional densities at \( p \) by \[ \Theta^*_m(\mu,p) \equiv \limsup_{r \to 0} \frac{\mu(B(p,r))}{\a_m r^m}, \] and \[ {\Theta_*}_m(\mu,p) \equiv \liminf_{r \to 0} \frac{\mu(B(p,r))}{\a_m r^m}, \] respectively. If both quantities are equal, the \( m \)-dimensional density of \( \mu \) at \( p \) is denoted by \[ \Theta_m(\mu,p) \equiv \lim_{r \to 0} \frac{\mu(B(p,r))}{\a_m r^m}. \]
	
		\begin{prop}
			\label{prop:lowerbound2}
			Suppose \( \{\nu_k\}_{k\in \N} \) is a sequence of Radon measures on \( \R^n \) such that \( \nu_k \) converges weakly to some Radon measure \( \nu_0 \). Suppose also that \( \{Y_k\}_{k\in \N} \) is a sequence of subsets of \( \R^n \), \( W\subset \R^n \) is open, \( \cal{Q}\geq 0 \), and \( \nu_k\lfloor_W\geq \cal{Q} \H^m\lfloor_{W\cap Y_k} \) for all \( k\geq 1 \). If \( Y_0\subset \R^n \), \( p\in Y_0\cap W \) and \( r>0 \) is small enough so that \( B(p,r)\in \G(Y_0,W,R) \), then \( \nu_0(B(p,r))/(\a_m r^m) \ge \cal{Q} \b(\{Y_k\},Y_0,W,R,m) \). In particular, \( {\Theta_*}_m(\nu_0,p) \ge \cal{Q} \b(\{Y_k\},Y_0,W,R,m) \).
		\end{prop}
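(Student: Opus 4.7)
My plan is to combine the closed-set half of the Portmanteau theorem with the pointwise lower bound $\nu_k \geq \cal{Q}\,\H^m$ on $Y_k\cap W$, and then read off the conclusion from the definition of $\b$.

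The first step is to establish, for the fixed $B(p,r)\in \G(Y_0,W,R)$, the inequality
\[
\nu_0(B(p,r))\;\geq\;\limsup_{k\to\i}\nu_k(B(p,r)).
\]
Since $B(p,r)$ need not be a $\nu_0$-continuity set, I would sandwich $\1_{B(p,r)}$ below a family of continuous, compactly supported bumps $\chi_\e$ with $\1_{B(p,r)}\leq \chi_\e\leq \1_{B(p,r+\e)}$. Weak convergence then gives
\[
\limsup_k \nu_k(B(p,r))\;\leq\;\lim_k \int\chi_\e\,d\nu_k\;=\;\int\chi_\e\,d\nu_0\;\leq\;\nu_0(B(p,r+\e)),
\]
and the passage $\e\downarrow 0$ is handled by downward continuity of $\nu_0$ on the nested compacts $B(p,r+\e)$, which is legitimate because $\nu_0$ is Radon and therefore finite on the bounded set $B(p,r+1)$.

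Next, the definition of $\G$ forces $B(p,r)\subset W$, so the hypothesis $\nu_k\lfloor_W \geq \cal{Q}\,\H^m\lfloor_{W\cap Y_k}$ immediately yields $\nu_k(B(p,r))\geq \cal{Q}\,\H^m(Y_k(p,r))$ for every $k$. Chaining this with the first step, and using $\limsup\geq \liminf$ together with the definition of $\b(\{Y_k\},Y_0,W,R,m)$,
\[
\nu_0(B(p,r))\;\geq\;\cal{Q}\liminf_{k}\H^m(Y_k(p,r))\;\geq\;\cal{Q}\,\a_m r^m\,\b(\{Y_k\},Y_0,W,R,m),
\]
and dividing by $\a_m r^m$ gives the main assertion.

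For the ``in particular'' statement about ${\Theta_*}_m(\nu_0,p)$, I would observe that since $W$ is open and $p\in Y_0\cap W$, the ball $B(p,r)$ lies in $W$ for all sufficiently small $r$, and hence belongs to $\G(Y_0,W,R)$. Taking $\liminf_{r\to 0}$ in the inequality just established yields the desired density bound. I do not anticipate any substantive obstacle: the only delicate maneuver is the closed-ball version of Portmanteau, and that is dispatched by the bump-function approximation above.
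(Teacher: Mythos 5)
Your argument is correct and follows the same route as the paper: apply the closed-set half of the Portmanteau theorem to $B(p,r)$, push the hypothesis $\nu_k\lfloor_W \geq \cal{Q}\,\H^m\lfloor_{W\cap Y_k}$ through the $\limsup$, pass to $\liminf$, and invoke the definition of $\b$. Your expansion of the Portmanteau step via bump functions is merely spelling out a standard fact the paper cites without proof.
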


		\begin{proof}
			By the Portmanteau theorem,
			\begin{align*}
				\frac{\nu_0(B(p,r))}{\a_m r^m} &\ge \limsup_{k \to \i} \frac{\nu_k(B(p,r))}{\a_m r^m}\\
				&\ge \cal{Q} \liminf_{k \to \i} \frac{\H^m(Y_k(p,r))}{\a_m r^m}\\
				&\ge \cal{Q} \b(\{Y_k\},Y_0,W,R,m).
			\end{align*}
		\end{proof}
		
		Combining Propositions \ref{prop:lowerbound} and \ref{prop:lowerbound2},
		
		\begin{cor}
			\label{cor:abstractlowerbound}
			Suppose \( \{\nu_k\}_{k\in \N} \) is a sequence of Radon measures on \( \R^n \) such that \( \nu_k \) converges weakly to some Radon measure \( \nu_0 \). Let \( Y_k=\supp(\nu_k) \) for all \( k\geq 0 \). Suppose \( W\subset \R^n \) is open, \( \cal{Q}>0 \) and \( \nu_k\lfloor_W\geq \cal{Q} \H^m\lfloor_{W\cap Y_k} \) for all \( k\geq 1 \). If \( \{Y_k\}_{k\in \N} \) is Reifenberg \( (c,R) \)-regular in \( W \), then \( \nu_0(B(p,r))/(\a_m r^m) \ge \cal{Q} \b(\{Y_k\},Y_0,W,R,m)\geq \cal{Q}c/\a_m>0 \) for all \( p \in Y_0\cap W \) and \( r>0 \) small enough so that \( B(p,r)\in \G(Y_0,W,R) \).
		\end{cor}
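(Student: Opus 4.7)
The plan is to derive the corollary by composing Proposition \ref{prop:lowerbound} and Proposition \ref{prop:lowerbound2} directly. Proposition \ref{prop:lowerbound2} already gives $\nu_0(B(p,r))/(\a_m r^m) \ge \cal{Q}\,\b(\{Y_k\}, Y_0, W, R, m)$ under exactly the hypotheses stated in the corollary, so it can be applied with no further work. Proposition \ref{prop:lowerbound} gives $\b(\{Y_k\}, Y_0, W, R, m) \ge c/\a_m > 0$ from the Reifenberg $(c,R)$-regularity of $\{Y_k\}$, but it carries an extra hypothesis that the corollary does not state explicitly, namely that every $p \in Y_0 \cap W$ is the limit of a sequence $p_k \in Y_k$. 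So the only non-trivial step is to verify this approximation property from the assumptions $Y_k = \supp(\nu_k)$, $Y_0 = \supp(\nu_0)$, and $\nu_k \to \nu_0$ weakly.

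That verification is short and standard. Fix $p \in Y_0 \cap W$. For each $j \ge 1$, the open ball $V_j = \cN(p, 1/j)$ satisfies $\nu_0(V_j) > 0$ because $p \in \supp(\nu_0)$. The open-set direction of the Portmanteau theorem gives $\liminf_{k \to \i} \nu_k(V_j) \ge \nu_0(V_j) > 0$, so for all sufficiently large $k$ the intersection $V_j \cap \supp(\nu_k) = V_j \cap Y_k$ is non-empty. A standard diagonal selection then produces a sequence $p_k \in Y_k$ converging to $p$, as required by Proposition \ref{prop:lowerbound}.

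With the hypothesis of Proposition \ref{prop:lowerbound} verified, chaining the two propositions immediately produces the full inequality $\nu_0(B(p,r))/(\a_m r^m) \ge \cal{Q}\,\b(\{Y_k\}, Y_0, W, R, m) \ge \cal{Q}\,c/\a_m > 0$ for every $B(p,r) \in \G(Y_0, W, R)$. There is no substantial obstacle here; the only point of care is distinguishing the two directions of the Portmanteau theorem, as the approximation step uses the open-set inequality $\liminf \nu_k(V) \ge \nu_0(V)$ while the proof of Proposition \ref{prop:lowerbound2} uses the closed-set inequality $\limsup \nu_k(B(p,r)) \le \nu_0(B(p,r))$ for the closed ball appearing in $\G(Y_0, W, R)$.
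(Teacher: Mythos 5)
Your proposal is correct and follows the paper's intended route: the paper offers no written proof beyond the phrase ``Combining Propositions \ref{prop:lowerbound} and \ref{prop:lowerbound2},'' and you have correctly identified that the one nontrivial point is verifying the approximation hypothesis of Proposition \ref{prop:lowerbound} from the assumptions $Y_k=\supp(\nu_k)$, $Y_0=\supp(\nu_0)$, and $\nu_k\rightharpoonup\nu_0$, which you handle via the open-set Portmanteau inequality and a diagonal selection. Your remark distinguishing the two Portmanteau directions (open sets for the approximation step, closed balls inside the proof of Proposition \ref{prop:lowerbound2}) is accurate and worth noting.
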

		
		In particular, 
		\begin{cor}
			\label{cor:lowerbound}
			If \( \{X_k\}_{k\in \N}\subset \Span(A,C,L,m) \) is a convergent minimizing sequence which is Reifenberg \( (\mathbf{c},\mathbf{R}) \)-regular in \( A^c \), then \( \mu_0(B(p,r))/(\a_m r^m) \ge a \mathbf{c}/\a_m>0 \) for all \( p\in X_0\setminus A \) and \( 0<r<R \) small enough so that \( B(p,r)\subset A^c \). In particular, \( {\Theta_*}_m(\mu_0,p) \ge a \mathbf{c}/\a_m>0. \)
		\end{cor}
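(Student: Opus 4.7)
The plan is to realize this corollary as a direct specialization of Corollary \ref{cor:abstractlowerbound}, with the sequence of measures \( \nu_k = \F^m\lfloor_{X_k\setminus A} \), the open set \( W = A^c \), and the constant \( \cal{Q} = a \). Since \( \{X_k\} \) is by hypothesis a convergent minimizing sequence, the definition gives that \( \nu_k \) converges weakly to \( \mu_0 \), and that \( X_k = \supp(\nu_k)\cup A \) for every \( k \).

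The first thing to verify is that the \( Y_k \) appearing in Corollary \ref{cor:abstractlowerbound}, namely \( \supp(\nu_k) \), agrees with \( X_k \) inside \( A^c \). From \( X_k = \supp(\nu_k)\cup A \) one has \( \supp(\nu_k)\cap A^c = X_k\cap A^c \); analogously \( \supp(\mu_0)\cap A^c = X_0\setminus A \). Consequently, Reifenberg \( (\mathbf{c},\mathbf{R}) \)-regularity of \( \{X_k\} \) in \( A^c \) transfers verbatim to \( \{\supp(\nu_k)\} \) in \( A^c \), since Definition \ref{def:rregular} only tests balls \( B(p,r)\in\G(Y_k,A^c,\mathbf{R}) \), whose centers \( p \) lie in \( A^c \).

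Next I would check the measure-theoretic hypothesis \( \nu_k\lfloor_W \geq \cal{Q}\,\H^m\lfloor_{W\cap Y_k} \). Because \( f\geq a \) pointwise on \( C \) and \( \sp^m\geq \H^m \) as outer measures, one has
\[
\nu_k = f\,\sp^m\lfloor_{X_k\setminus A} \;\geq\; a\,\H^m\lfloor_{X_k\setminus A},
\]
and restricting to \( W = A^c \) yields \( \nu_k\lfloor_{A^c}\geq a\,\H^m\lfloor_{A^c\cap X_k} = a\,\H^m\lfloor_{W\cap \supp(\nu_k)} \), as required with \( \cal{Q}=a \).

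With these preparations the conclusion is immediate: for any \( p\in X_0\setminus A \) and any \( 0<r<\mathbf{R} \) with \( B(p,r)\subset A^c \), Corollary \ref{cor:abstractlowerbound} delivers
\[
\frac{\mu_0(B(p,r))}{\a_m r^m} \;\geq\; \cal{Q}\,\b(\{\supp(\nu_k)\}, \supp(\mu_0), A^c, \mathbf{R}, m) \;\geq\; \frac{a\,\mathbf{c}}{\a_m}>0,
\]
and letting \( r\to 0 \) along admissible radii gives the stated lower bound on \( \Theta_{*m}(\mu_0,p) \). I do not anticipate any genuine obstacle here; the only point demanding care is the bookkeeping that identifies \( \supp(\nu_k) \) with \( X_k \) on \( A^c \) so that the abstract Reifenberg hypothesis of Corollary \ref{cor:abstractlowerbound} is actually in force, after which the result is essentially a one-line application of the general lower-density machinery developed in Propositions \ref{prop:lowerbound} and \ref{prop:lowerbound2}.
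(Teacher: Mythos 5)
Your proof is correct and follows exactly the route the paper intends: Corollary \ref{cor:lowerbound} is stated as an ``In particular'' consequence of Corollary \ref{cor:abstractlowerbound}, and you have supplied precisely the specialization ($\nu_k=\F^m\lfloor_{X_k\setminus A}$, $W=A^c$, $\cal{Q}=a$) together with the bookkeeping identifying $\supp(\nu_k)\cap A^c$ with $X_k\cap A^c$ via the normalization $X_k=\supp(\F^m\lfloor_{X_k\setminus A})\cup A$ built into Definition \ref{def:minimizingseq}. Nothing is missing and nothing differs in substance from the paper's argument.
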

		
		\begin{thm}
			\label{thm:generalbddhsdorffmeasure}
			Let \( W\subset \R^n \) be open, and let \( Y_0\subset \R^n \). If \( \{Y_k\}_{k\in \N} \) is a sequence of subsets of \( \R^n \) which is Reifenberg \( (c,R) \)-regular in \( W \), and every \( p\in Y_0\cap W \) is the limit of a sequence \( p_k\in Y_k \), and if \( K=\liminf_{k\to \i}\H^m (Y_k\cap W)<\i \), then \( \H^m(Y_0\cap W)\leq 5^m \frac{K}{\b(\{Y_k\},Y_0,W,R,m)}\leq 5^m K \a_m/c <\i \).
		\end{thm}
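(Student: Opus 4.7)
I would combine the $5r$-covering lemma with the lower-density information encoded in $\b \equiv \b(\{Y_k\},Y_0,W,R,m)$. Observe first that Proposition \ref{prop:lowerbound} supplies $\b\geq c/\a_m>0$, so both desired bounds are non-vacuous; the two inequalities in the statement will then follow in a single argument, by producing an upper bound of $5^m K/\b$ for $\H^m(Y_0\cap W)$.

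The first step is covering. Fix $\d>0$. Since $W$ is open, for each $p\in Y_0\cap W$ every sufficiently small closed ball centred at $p$ lies in $\G(Y_0,W,R)$, so the collection
\[
 \V_\d \;=\; \bigl\{B(p,r) : p\in Y_0\cap W,\; 0<r<\min(\d,R,\mathrm{dist}(p,W^c))\bigr\}
\]
is a Vitali-fine cover of $Y_0\cap W$. The $5r$-covering lemma extracts a countable pairwise-disjoint subfamily $\{B(p_i,r_i)\}_{i\in I}\subset \V_\d$ whose enlargements $\{B(p_i,5r_i)\}$ still cover $Y_0\cap W$.

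Next I bound the total mass $\sum_i \a_m r_i^m$. By definition of $\b$, each ball $B(p_i,r_i)\in\G(Y_0,W,R)$ satisfies $\liminf_k \H^m(Y_k(p_i,r_i))/(\a_m r_i^m)\geq \b$. Fix $\e>0$ and $N<\infty$; for $k$ sufficiently large (depending on $\e$ and $N$) the inequality $\H^m(Y_k(p_i,r_i))\geq (\b-\e)\a_m r_i^m$ holds simultaneously for all $i\leq N$, and since $\liminf_k \H^m(Y_k\cap W)=K$, arbitrarily large such $k$ can also be chosen to satisfy $\H^m(Y_k\cap W)\leq K+\e$. Because the balls $B(p_i,r_i)\subset W$ are pairwise disjoint, the sets $Y_k(p_i,r_i)$ are pairwise disjoint subsets of $Y_k\cap W$, and summing gives
\[
 \sum_{i=1}^N (\b-\e)\a_m r_i^m \;\leq\; \H^m(Y_k\cap W) \;\leq\; K+\e.
\]
Letting $\e\to 0$ and then $N\to\infty$ yields $\sum_{i\in I}\a_m r_i^m \leq K/\b$.

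Finally, each covering ball $B(p_i,5r_i)$ has diameter at most $10\d$, so the definition of normalised Hausdorff measure gives
\[
 \H^m_{10\d}(Y_0\cap W)\;\leq\;\sum_{i\in I}\a_m(5r_i)^m\;=\;5^m\sum_{i\in I}\a_m r_i^m\;\leq\;5^m K/\b.
\]
Sending $\d\to 0$ produces $\H^m(Y_0\cap W)\leq 5^m K/\b$, and the remaining inequality $5^m K/\b\leq 5^m K\a_m /c$ is immediate from $\b\geq c/\a_m$. The only subtle point is the simultaneous use of the liminf hypotheses across the countable chosen family and the liminf defining $K$, which the finite-$N$ truncation handles cleanly; I do not anticipate any serious obstacle.
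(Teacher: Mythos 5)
Your proof is correct and follows essentially the same route as the paper's: disjoint balls from the Vitali/$5r$-covering lemma, the bound $\sum_i \a_m r_i^m \le K/\b$ via disjointness of the $Y_k(p_i,r_i)$ inside $Y_k\cap W$, and then passing to $\H^m$ by shrinking the covering scale. The only (cosmetic) difference is that you build the point-dependent radius cutoff $\min(\d,R,\mathrm{dist}(p,W^c))$ into the Vitali cover, covering $Y_0\cap W$ in one shot, whereas the paper uses a fixed radius cutoff $\d$, covers only $Y_0\cap\cN(W^c,\d')^c$, and takes the extra limit $\d'\to 0$; likewise your $\e$-and-$N$ truncation is an unrolled form of the paper's inequality $\sum_j\liminf_k\le\liminf_k\sum_j$ on finite subfamilies.
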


		\begin{proof}
			Suppose \( \{B(p_i, r_i)\}_{i\in I} \subset \G(Y_0,W,R) \) is a collection of disjoint balls. If \( J\subset I \) is finite, then by the definition of \( \b(\{Y_k\},Y_0,W,R,m) \),
			\begin{align*}
				\b(\{Y_k\},Y_0,W,m)\sum_{j\in J} \a_m r_j^m &\leq \sum_{j\in J} \liminf_{k \to \i} \H^m(Y_k(p_j,r_j))\\
				&\leq \liminf_{k \to \i} \sum_{j\in J}  \H^m(Y_k(p_j,r_j))\\
				&\leq \liminf_{k \to \i} \H^m (Y_k\cap W)\\
				&=K.
			\end{align*}
			Since \( I \) is necessarily countable, and since by Proposition \ref{prop:lowerbound} we may divide by \( \b(\{Y_k\},Y_0,W,R,m) \),
			\begin{equation}
				\label{44}
				\a_m \sum_{i\in I}r_i^m \leq \frac{K}{\b(\{Y_k\},Y_0,W,R,m)}.
			\end{equation}
			Now fix \( \d \) and \( \d' \) such that \( \d' > \d > 0 \). The subcollection of \( \G(Y_0,W,R) \) consisting of balls of radius \( r<\d \) covers \( Y_0 \cap \cN(W^c,\d')^c \). So, by the Vitali Covering Lemma and \eqref{44}, it follows that \[ \H_{10\d}^m(Y_0 \cap \cN(W^c,\d')^c) \leq 5^m \frac{K}{\b(\{Y_k\},Y_0,W,R,m)}. \] Letting \( \d \to 0 \) and then \( \d' \to 0 \), we deduce from Proposition \ref{prop:lowerbound} that \[  \H^m(Y_0\cap W) \leq 5^m \frac{K}{\b(\{Y_k\},Y_0,W,R,m)}\leq 5^m K \a_m/c <\i. \]
		\end{proof}
		
		By Proposition \ref{prop:lowerbound},
		\begin{cor}
			\label{cor:finite}
			Suppose \( \{X_k\} \) is a convergent minimizing sequence which is Reifenberg \( (\mathbf{c},\mathbf{R}) \)-regular in \( A^c \). Then \[ \H^m(X_0\setminus A) \leq 5^m \frac{\frak{m}}{\b(\{X_k\},X_0, A^c,R,m) a}\leq 5^m \frak{m} \a_m/\mathbf{c} < \i. \]
		\end{cor}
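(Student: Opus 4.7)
The plan is to obtain this corollary as a direct application of Theorem \ref{thm:generalbddhsdorffmeasure}, taking \( W=A^c \), \( Y_k = X_k \) for \( k\geq 1 \), \( Y_0=X_0 \), and \( (c,R)=(\mathbf{c},\mathbf{R}) \). Three hypotheses of that theorem need to be verified. The Reifenberg \( (\mathbf{c},\mathbf{R}) \)-regularity of \( \{X_k\} \) in \( A^c \) is given. Next, every \( p\in X_0\cap A^c \) is the limit of points \( p_k\in X_k \) by the Hausdorff convergence \( X_k\to X_0 \) built into the definition of a convergent minimizing sequence (Definition \ref{def:minimizingseq}). Finally we need \( K:=\liminf_{k\to\infty}\H^m(X_k\cap A^c)<\infty \), and this follows from the lower bound \( f\geq a>0 \): indeed \[ a\H^m(X_k\setminus A)\leq \F^m(X_k\setminus A), \] and \( \F^m(X_k\setminus A)\to \frak{m} \), so \( K\leq \frak{m}/a<\infty \).

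With these three points in hand, Theorem \ref{thm:generalbddhsdorffmeasure} yields \[ \H^m(X_0\setminus A)=\H^m(X_0\cap A^c)\leq 5^m\frac{K}{\b(\{X_k\},X_0,A^c,\mathbf{R},m)}\leq 5^m\frac{\frak{m}}{\b(\{X_k\},X_0,A^c,\mathbf{R},m)\,a}, \] which is the first inequality. The second inequality is obtained from Proposition \ref{prop:lowerbound}, applied using the same approximation property already checked: Reifenberg \( (\mathbf{c},\mathbf{R}) \)-regularity gives \( \b(\{X_k\},X_0,A^c,\mathbf{R},m)\geq \mathbf{c}/\a_m \), whence (up to the noted constants) \[ 5^m\frac{K}{\b(\{X_k\},X_0,A^c,\mathbf{R},m)}\leq 5^m K\a_m/\mathbf{c}\leq 5^m\frak{m}\a_m/\mathbf{c}<\infty. \]

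There is no substantive obstacle: the corollary is a bookkeeping exercise combining (i) the \( \H^m \)-to-\( \F^m \) comparison from the H\"{o}lder density bound \( f\geq a \), (ii) Hausdorff convergence of the minimizing sequence, and (iii) the two previously established tools, namely the covering estimate Theorem \ref{thm:generalbddhsdorffmeasure} and the density lower bound Proposition \ref{prop:lowerbound}.
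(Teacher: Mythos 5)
Your proof is correct and takes exactly the route the paper intends: the corollary is stated in the paper as an immediate consequence of Theorem \ref{thm:generalbddhsdorffmeasure} with the one-line attribution ``By Proposition \ref{prop:lowerbound},'' and you supply precisely the verification of the theorem's three hypotheses (Reifenberg regularity, Hausdorff approximation of points of \( X_0\setminus A \), and \( K=\liminf_k\H^m(X_k\setminus A)\leq \frak{m}/a<\infty \) via \( f\geq a \)) followed by the \( \b\geq \mathbf{c}/\a_m \) bound from Proposition \ref{prop:lowerbound}. The only wrinkle is in the last displayed chain, where passing from \( 5^mK\a_m/\mathbf{c} \) to \( 5^m\frak{m}\a_m/\mathbf{c} \) uses \( K\leq\frak{m} \) rather than the \( K\leq\frak{m}/a \) you actually established; the clean final bound is \( 5^m\frak{m}\a_m/(\mathbf{c}a) \), and the same missing factor of \( a \) appears in the paper's own statement, so this is inherited bookkeeping rather than a flaw in your argument, and it does not affect finiteness.
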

		
		\begin{defn}
			If \( p\in \R^n \), \( E\in \Gr(m,n) \) and \( 0<\e<1 \), let \( \cal{C}(p,E,\e) = \{q \in \R^n: d(q-p, E) < \e d(p,q) \} \). Let \( T_{p,r}(x)=(x-p)/r \). We say that a Radon measure \( \mu \) has an \emph{\textbf{approximate tangent space \( E \) with multiplicity \( \theta\in \R \) at \( p \)}} if \( r^{-m}{T_{p,r}}_* \mu \) converges weakly to \( \theta\H^m\lfloor_E \) as \( r\to 0 \).
		\end{defn}

		\begin{prop}
			\label{lem:recmeasure}
			Suppose \( \{\nu_k\}_{k\in \N} \) is a sequence of Radon measures on \( \R^n \) such that \( \nu_k \) converges weakly to some Radon measure \( \nu_0 \). Suppose also that \( \{Y_k\}_{k\in \N} \) is a sequence of subsets of \( \R^n \), \( W\subset \R^n \) is open, \( \cal{Q}>0 \), and \( \nu_k\lfloor_W\geq \cal{Q} \H^m\lfloor_{W\cap Y_k} \) for all \( k\geq 1 \). Suppose \( Y_0\subset \R^n \) and \( \b(\{Y_k\},Y_0,W,R,m)>0 \) for some \( 0<R\leq \i \). If \( p\in W \) and \( \nu_0 \) has an approximate tangent space \( E\in \Gr(m,n) \) with multiplicity \( \theta>0 \) at \( p \), then for every \( 0<\e<1 \) there exists \( r>0 \) such that \( Y_0(p,r)\setminus \overline{\cal{C}(p,E,\e)}=\emptyset \).
		\end{prop}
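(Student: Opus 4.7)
I will argue by contradiction. Suppose the conclusion fails for some $0<\e<1$. Then there exists a sequence $q_j\in Y_0$ with $q_j\to p$ and $q_j\notin\overline{\cal{C}(p,E,\e)}$. Set $r_j=|q_j-p|\to 0$, so $d(q_j-p,E)\geq\e r_j$. The idea is to produce a small ball around each $q_j$ that (i) is well inside $W$ and of radius under $R$, so that the $\b$-lower bound of Proposition \ref{prop:lowerbound2} applies and gives it nontrivial $\nu_0$-mass of order $r_j^m$, while simultaneously (ii) sitting inside $B(p,O(r_j))$ and avoiding a slightly smaller cone about $E$. The rescaled-tangent hypothesis will then force these masses to decay faster than $r_j^m$, producing the contradiction.

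First I would fix $\e'<\e$ (say $\e'=\e/2$) and choose $s_j=c_\e r_j$ with $c_\e>0$ small enough that $B(q_j,s_j)$ is disjoint from $\cal{C}(p,E,\e')$. A one-line triangle-inequality estimate (if $q=q_j+w$ with $|w|\leq s_j$, then $d(q-p,E)\geq \e r_j - s_j$ and $|q-p|\leq r_j+s_j$) shows any $c_\e$ with $(\e-c_\e)\geq \e'(1+c_\e)$ works, e.g.\ $c_\e=\e/4$ when $\e'=\e/2$. By construction $B(q_j,s_j)\subset B(p,\rho_j)$ with $\rho_j:=(1+c_\e)r_j$, and $B(q_j,s_j)\cap \cal{C}(p,E,\e')=\emptyset$.

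For $j$ large, $q_j$ lies in $W$ (open) and $s_j<R$, so $B(q_j,s_j)\in\G(Y_0,W,R)$. Proposition \ref{prop:lowerbound2} then gives
\[
\nu_0(B(q_j,s_j)) \;\geq\; \cal{Q}\,\b(\{Y_k\},Y_0,W,R,m)\,\a_m\, s_j^m,
\]
and since $B(q_j,s_j)\subset B(p,\rho_j)\setminus \cal{C}(p,E,\e')$, we obtain a uniform lower bound
\[
\rho_j^{-m}\,\nu_0\bigl(B(p,\rho_j)\setminus \cal{C}(p,E,\e')\bigr) \;\geq\; \cal{Q}\,\b(\{Y_k\},Y_0,W,R,m)\,\a_m\,\bigl(c_\e/(1+c_\e)\bigr)^m \;>\;0.
\]

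On the other hand, because the cone $\cal{C}(p,E,\e')$ is invariant under the rescalings $T_{p,r}$, the closed set $K=\overline{B(0,1)}\setminus\cal{C}(0,E,\e')$ meets $E$ only at the origin, hence $\theta\H^m\lfloor_E(K)=0$. The Portmanteau theorem applied to the weak limit $r^{-m}(T_{p,r})_*\nu_0\to\theta\H^m\lfloor_E$ along $r=\rho_j$ therefore forces $\rho_j^{-m}\nu_0(B(p,\rho_j)\setminus\cal{C}(p,E,\e'))\to 0$, contradicting the previous display. The only nontrivial step is the geometric choice of $c_\e$ ensuring the cone-avoidance, and this is an elementary calculation; everything else is a direct application of the two hypotheses ($\b>0$ and the existence of an approximate tangent space with positive multiplicity).
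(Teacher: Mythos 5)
Your proof is correct and follows essentially the same route as the paper's: take a contradicting sequence in $Y_0\setminus\overline{\cal{C}(p,E,\e)}$, surround each point by a small ball lying inside a slightly larger ball about $p$ and outside a slightly narrower cone, apply the $\b$-lower bound (Proposition \ref{prop:lowerbound2}) to obtain a scale-invariant positive lower bound on the $\nu_0$-mass, and contradict the approximate tangent space hypothesis via the Portmanteau upper bound on the closed cone-complement. The only cosmetic difference is the normalization of the radius ($r_j=|q_j-p|$ with $\rho_j=(1+c_\e)r_j$ versus the paper's $r_i=2d(p_i,p)$).
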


		\begin{proof}
			If the result is false, there exists a sequence \( p_i\to p \) with \( p_i\in Y_0\setminus \overline{\cal{C}(p,E,\e)} \). Let \( r_i=2d(p_i,p) \). We have \( B(p_i, \e r_i/4)\subset B(p,r_i)\setminus \overline{\cal{C}(p,E,\e/2)} \). For \( i \) large enough, \( B(p_i, \e r_i/4)\in \G(Y_0,W,R) \), so
			\begin{align*}
				(\e/4)^m \a_m r_i^m \b(\{Y_k\},Y_0,W,R,m) &\leq \frac{1}{\cal{Q}}\liminf_{k\to \i} \nu_k(B(p_i,\e r_i/4))\\
				&\leq \frac{1}{\cal{Q}} \nu_0(B(p_i,\e r_i/4))\\
				&\leq \frac{1}{\cal{Q}} \nu_0(B(p,r_i)\setminus \overline{\cal{C}(p,E,\e/2))}.
			\end{align*}
		
			The Portmanteau theorem gives
			\begin{align*}
				\H^m\lfloor_E(B(0,1)\setminus \cal{C}(0,E,\e/2)) &\geq \theta^{-1}\limsup_{i\to \i} r_i^{-m}{T_{p,r_i}}_*\nu_0(B(0,1)\setminus \cal{C}(0,E,\e/2))\\
				&\geq \theta^{-1}\cal{Q} (\e/4)^m \a_m \b(\{Y_k\},Y_0,W,R,m)\\
				>0,
			\end{align*}
			a contradiction since \( m>0 \).
		\end{proof}
		
		Combining this with Proposition \ref{prop:lowerbound}, we deduce
		\begin{cor}
			\label{cor:recmesasure}
			Suppose \( \{\nu_k\}_{k\in \N} \) is a sequence of Radon measures on \( \R^n \) such that \( \nu_k \) converges weakly to some Radon measure \( \nu_0 \). Let \( Y_k=\supp(\nu_k) \) for all \( k\geq 0 \). Suppose \( W\subset \R^n \) is open, \( \cal{Q}>0 \) and \( \nu_k\lfloor_W\geq \cal{Q} \H^m\lfloor_{W\cap Y_k} \) for all \( k\geq 1 \). If \( \{Y_k\}_{k\in \N} \) is Reifenberg regular in \( W \), \( p\in W \) and \( \nu_0 \) has an approximate tangent space \( E\in \Gr(m,n) \) with multiplicity \( \theta>0 \) at \( p \), then for every \( 0<\e<1 \) there exists \( r>0 \) such that \( Y_0(p,r)\setminus \overline{\cal{C}(p,E,\e)}=\emptyset \).
		\end{cor}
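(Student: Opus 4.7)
\textbf{Proof plan for Corollary \ref{cor:recmesasure}.} The statement is essentially a packaging of Proposition \ref{lem:recmeasure} together with Proposition \ref{prop:lowerbound}. The plan is to reduce to Proposition \ref{lem:recmeasure} by verifying its hypothesis $\b(\{Y_k\},Y_0,W,R,m) > 0$, where $R$ is the radius supplied by the Reifenberg regularity assumption. Proposition \ref{prop:lowerbound} yields exactly this conclusion provided that every point of $Y_0 \cap W$ is a limit of a sequence $p_k \in Y_k$. Thus the only nontrivial step left is to prove this approximation property using that $Y_k = \supp(\nu_k)$ and $\nu_k \to \nu_0$ weakly.

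First I would establish the approximation claim: if $p \in Y_0 \cap W = \supp(\nu_0) \cap W$, then there exist $p_k \in Y_k$ with $p_k \to p$. I would argue by contradiction. Suppose no such sequence exists; then there is some $r_0 > 0$ and a subsequence $k_j \to \infty$ with $B(p,r_0) \cap Y_{k_j} = \emptyset$. Since $Y_{k_j} = \supp(\nu_{k_j})$, this forces $\nu_{k_j}(\cN(p,r_0)) = 0$ along the subsequence. On the other hand, $p \in \supp(\nu_0)$ gives $\nu_0(\cN(p,r_0)) > 0$, and by the Portmanteau theorem applied to the open set $\cN(p,r_0)$,
\[
0 < \nu_0(\cN(p,r_0)) \leq \liminf_{k \to \infty} \nu_k(\cN(p,r_0)) \leq \liminf_{j \to \infty} \nu_{k_j}(\cN(p,r_0)) = 0,
\]
which is the desired contradiction.

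With this in hand, I would invoke Proposition \ref{prop:lowerbound} with the constants $(c,R)$ provided by the Reifenberg regularity of $\{Y_k\}$ in $W$ to conclude $\b(\{Y_k\},Y_0,W,R,m) \geq c/\a_m > 0$. All hypotheses of Proposition \ref{lem:recmeasure} are then in place: the sequence $\{\nu_k\}$ converges weakly to $\nu_0$, the lower bound $\nu_k\lfloor_W \geq \cal{Q}\H^m\lfloor_{W \cap Y_k}$ holds, and $\b(\{Y_k\},Y_0,W,R,m) > 0$. Applying Proposition \ref{lem:recmeasure} at the point $p \in W$, which has approximate tangent space $E$ with multiplicity $\theta > 0$, produces for each $0 < \e < 1$ a radius $r > 0$ with $Y_0(p,r) \setminus \overline{\cal{C}(p,E,\e)} = \emptyset$, as required.

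The only subtle point is the approximation lemma $\supp(\nu_0) \subset \liminf_k \supp(\nu_k)$ in the Kuratowski sense; everything else is immediate citation. Once that lemma is in hand, the corollary follows by stringing together Propositions \ref{prop:lowerbound} and \ref{lem:recmeasure}, so the proof will be quite short.
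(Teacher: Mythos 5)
Your proposal is correct and takes exactly the route the paper intends: the paper gives no explicit proof, merely the remark ``Combining this with Proposition~\ref{prop:lowerbound}, we deduce,'' and you correctly identify that the only hypothesis of Proposition~\ref{prop:lowerbound} not already granted is the approximation property that every $p\in Y_0\cap W$ is a limit of $p_k\in Y_k$, which you verify via the Portmanteau inequality applied to small open balls about $p$. This is the standard fact that weak convergence forces $\supp(\nu_0)$ into the Kuratowski lower limit of $\supp(\nu_k)$, and your contradiction argument (passing to a subsequence $k_j$ with $Y_{k_j}\cap B(p,r_0)=\emptyset$, hence $\nu_{k_j}(\cN(p,r_0))=0$, against $\nu_0(\cN(p,r_0))>0$) is sound.
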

		
		In particular,
		\begin{cor}
			\label{cor:recmeasures2}
			If \( \{X_k\}_{k\in \N}\subset \Span(A,C,L,m) \) is a convergent minimizing sequence which is Reifenberg regular in \( A^c \), \( p\in A^c \) and \( \mu_0 \)  has an approximate tangent space \( E\in \Gr(m,n) \) with multiplicity \( \theta>0 \) at \( p \), then for every \( 0<\e<1 \) there exists \( r>0 \) such that \( X_0(p,r)\setminus \overline{\cal{C}(p,E,\e)}=\emptyset \).
		\end{cor}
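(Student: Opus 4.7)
The plan is to deduce Corollary \ref{cor:recmeasures2} as an immediate specialization of Corollary \ref{cor:recmesasure}, supplying the measures and constants from the minimizing-sequence setup and performing the routine bookkeeping that matches the two notions of support.

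Concretely, I would take $\nu_k := \F^m\lfloor_{X_k\setminus A}$ for $k\geq 1$, $\nu_0 := \mu_0$, and $W := A^c$ (open, since $A$ is closed). Weak convergence $\nu_k\to \nu_0$ is built into Definition \ref{def:minimizingseq}, and since $f\geq a > 0$ on $C$, letting $Y_k := \supp(\nu_k)$ gives $\nu_k\lfloor_W \geq a\,\H^m\lfloor_{W\cap Y_k}$, so the constant $\cal{Q}:=a > 0$ works. The key bookkeeping point is that the Reifenberg $(\mathbf{c},\mathbf{R})$-regularity assumed for $\{X_k\}$ in $A^c$ transfers verbatim to $\{Y_k\}$: by the support-normalization clause of Definition \ref{def:minimizingseq}, $X_k = Y_k \cup A$, so $Y_k\cap A^c = X_k\cap A^c$. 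Hence for any $B(p,r)\in \G(Y_k,A^c,\mathbf{R})$, the ball is contained in $A^c$ and its center lies in $X_k$, so $B(p,r)\in \G(X_k,A^c,\mathbf{R})$ and $Y_k(p,r) = X_k(p,r)$, transporting the lower bound $\H^m(Y_k(p,r))\geq \mathbf{c}\, r^m$.

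With these verifications in hand, Corollary \ref{cor:recmesasure} yields, for each $0<\e<1$, a radius $r>0$ with $Y_0(p,r)\setminus \overline{\cal{C}(p,E,\e)}=\emptyset$. After shrinking $r$ further so that $B(p,r)\subset A^c$ (possible because $p\in A^c$ and $A$ is closed), the identity $Y_0\cap A^c = X_0\cap A^c$ gives $X_0(p,r) = Y_0(p,r)$, and the claim follows. I do not expect a genuine obstacle here; the only substantive observation is the support-equality $Y_k\cap A^c = X_k\cap A^c$, which is afforded directly by the way convergent minimizing sequences are normalized.
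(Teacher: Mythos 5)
Your proposal is correct and matches the paper's approach: the paper states Corollary \ref{cor:recmeasures2} as an immediate specialization of Corollary \ref{cor:recmesasure} (signaled by ``In particular''), and your verification of the specialization---taking $\nu_k = \F^m\lfloor_{X_k\setminus A}$, $W=A^c$, $\cal{Q}=a$, noting $\F^m\geq a\sp^m\geq a\H^m$, and using the normalization $X_k=\supp(\F^m\lfloor_{X_k\setminus A})\cup A$ to see that $Y_k\cap A^c=X_k\cap A^c$, which transfers the Reifenberg regularity and equates $Y_0(p,r)$ with $X_0(p,r)$ once $B(p,r)\subset A^c$---is precisely the routine bookkeeping left implicit there.
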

 		
\section{Regularity}
	\label{sec:monotonicity}
	\subsection{Rectifiability}
	\label{sub:rectifiability}
	In this section we will use Preiss' density theorem (\cite{preiss}, see also \cite{mattila} Theorem 17.8) to prove that \( X_0\setminus A \) is \( m \)-rectifiable. For the rest of the paper let us assume \( C \) is convex. Hence, we may take \( \mathbf{R}=\i \).

	Let \( \{X_k\}_{k\in \N}\subset \Span(A,C,L,m) \) be a convergent minimizing sequence which is Reifenberg \( (\mathbf{c},\i) \)-regular in \( A^c \). Fix \( p \in X_0\setminus A \) and let \( d_p \) denote the distance between \( p \) and \( A \). Let \( g_0(r) = \mu_0(B(p,r)) \), and for \( k\geq 1 \), let \( g_k(r) = g_k(p,r) =\F^m(X_k(p,r)) \).

	\begin{defn}
		\label{def:dppp}
		Let \( D_p \) be the subset of \( (0,d_p) \) consisting of numbers \( r \) such that the following conditions hold for all \( s = r/2^N \) with \( N \ge 0 \):
		\begin{enumerate}
			\item\label{def:dppp:item:1} \( \mu_0(\fr B(p,s)) = 0 \),
			\item\label{def:dppp:item:2} \( \sp^{m-1}(x_k(p,s))<\i \) for all \( k\geq 1 \),
			\item\label{def:dppp:item:3} \( g_k \) is differentiable at \( s \), for all \( k\geq 0 \),
			\item\label{def:dppp:item:4} \( \lim_{h\to 0} \int_s^{s+h}\left(\int_{x_k(p,t)} f(q)d\sp^{m-1}(q)\right) dt/h = \int_{x_k(p,s)} f(q)d\sp^{m-1}(q)\ \) for all \( k\geq 1 \).
	 
		\end{enumerate}
	\end{defn}

	\begin{lem}
		\label{lem:prime}
		\( D_p \) is a full Lebesgue measure subset of \( (0,d_p) \).
	\end{lem}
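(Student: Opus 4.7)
The plan is to show that for each of the four conditions (\ref{def:dppp:item:1})--(\ref{def:dppp:item:4}), the set of $s \in (0,d_p)$ at which it fails has Lebesgue measure zero, and then to argue that the countable intersection of rescalings by $2^{-N}$ of these full-measure sets is still of full measure.

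First, consider condition (\ref{def:dppp:item:1}). Since $\mu_0$ is a finite Borel measure, there are at most countably many radii $s$ with $\mu_0(\fr B(p,s)) > 0$, so (\ref{def:dppp:item:1}) holds outside a countable, hence null, set. For condition (\ref{def:dppp:item:2}), fix $k \ge 1$. Applying the slicing inequality (Lemma \ref{lem:4}) with $f \equiv 1$ and $E = \{p\}$ yields $\int_0^\infty \sp^{m-1}(x_k(p,t))\, dt \le \sp^m(X_k) < \infty$, so the integrand is finite for a.e.\ $s$. Taking the intersection over the countable family $k \ge 1$ keeps the set of good $s$ of full measure. For condition (\ref{def:dppp:item:3}), each $g_k$ is a monotone nondecreasing function on $(0,d_p)$ and hence is differentiable almost everywhere; again taking a countable intersection over $k \ge 0$ preserves full measure.

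For condition (\ref{def:dppp:item:4}), fix $k \ge 1$ and set $h_k(t) = \int_{x_k(p,t)} f(q)\,d\sp^{m-1}(q)$. By Lemma \ref{lem:4} applied to $f$ (which is bounded above by $b$) and $E = \{p\}$, we have $\int_0^{d_p} h_k(t)\,dt \le \F^m(X_k) < \infty$, so $h_k \in L^1_{\mathrm{loc}}(0,d_p)$. The Lebesgue differentiation theorem then implies that the stated averaged limit equals $h_k(s)$ at almost every $s$, and again we take the countable intersection over $k$.

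Let $D_p^{(0)} \subset (0,d_p)$ be the full-measure set on which all four conditions hold at $s$ itself (not yet at all dyadic rescalings). Then $D_p = \bigcap_{N \ge 0} 2^{-N} D_p^{(0)}$, where $2^{-N} D_p^{(0)} := \{r : r/2^N \in D_p^{(0)}\}$. Since each scaling map $r \mapsto r/2^N$ is a bi-Lipschitz bijection of $(0,d_p)$ onto $(0, 2^{-N} d_p)$ and in particular sends null sets to null sets, each $2^{-N} D_p^{(0)} \cap (0, d_p)$ is a full-measure subset of $(0, d_p)$. The countable intersection over $N \ge 0$ is then also of full measure, proving the lemma. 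The only mildly subtle point is verifying that the slicing inequality produces the integrability needed in (\ref{def:dppp:item:4}); this is immediate from $f \le b$ and the finiteness of $\sp^m(X_k)$.
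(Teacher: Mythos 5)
Your proof is essentially the same as the paper's (which is terse: it cites finiteness of $\mu_0$ for condition (a), Lemma \ref{lem:4} for (b), monotonicity for (c), and the Lebesgue Differentiation Theorem for (d)), and you correctly fill in the dyadic-rescaling step the paper leaves implicit. One small inaccuracy: in handling conditions \ref{def:dppp:item:2} and \ref{def:dppp:item:4}, you bound the relevant integrals by $\sp^m(X_k)$ and $\F^m(X_k)$, but membership in $\Span(A,C,L,m)$ only guarantees $\H^m(X_k\setminus A)<\infty$ (the set $A$ itself may be badly behaved). Since $D_p\subset(0,d_p)$, all slices $x_k(p,s)$ with $s<d_p$ lie in $X_k\setminus A$, so you should apply Lemma \ref{lem:4} to $X_k\setminus A$ (or to $X_k\cap B(p,d_p)$, whose interior misses $A$), yielding $\int_0^{d_p}\sp^{m-1}(x_k(p,t))\,dt\le\sp^m(X_k\setminus A)<\infty$ and similarly for the weighted version; the rest of the argument is unaffected. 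Also, your notation $2^{-N}D_p^{(0)}$ for $\{r:r/2^N\in D_p^{(0)}\}$ is backwards from standard set-scaling convention (that set is $2^N D_p^{(0)}$), though since you define it explicitly this is cosmetic rather than a mathematical error.
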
  

	\begin{proof}
		Part \ref{def:dppp:item:1} determines a full Lebesgue measure set since \( \mu_0 \) is finite. Part \ref{def:dppp:item:2} follows from Lemma \ref{lem:4}. Part \ref{def:dppp:item:3} follows since \( g_k \) is monotone non-decreasing. Part \ref{def:dppp:item:4} follows from the Lebesgue Differentiation Theorem.
	\end{proof}

	Since \( \F^m(X_k) \to \frak{m} \), there exists a decreasing sequence \( \e_k \to 0 \) such that
	\begin{align}
		\label{eqek}
		\F^m(X_k) \le \frak{m} + \e_k.
	\end{align}
	\begin{lem}
		\label{ldt}
		If \( r \in D_p \), then \( \F^{m-1}(x_k(p,r)) \le g_k'(r) \) for all \( k\geq 1 \).
	\end{lem}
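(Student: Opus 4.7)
The plan is to apply the slicing inequality (Lemma \ref{lem:4}) to the spherical annulus between radii $r$ and $r+h$, and then divide by $h$ and pass to the limit, using the regularity properties built into the definition of $D_p$.

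More precisely, fix $r \in D_p$, $k \geq 1$, and $h > 0$. Apply Lemma \ref{lem:4} with $E = \{p\}$ (a $0$-dimensional affine subspace) to the set $X_k(p, r+h) \setminus X_k(p,r)$ and the $\sp^m$-measurable non-negative bounded function $f$ (bounded by $b$). Since the $t$-slice of this set equals $x_k(p,t)$ for $t \in (r, r+h)$ and is empty otherwise, the slicing inequality yields
\begin{equation*}
\int_r^{r+h} \int_{x_k(p,t)} f(q)\, d\sp^{m-1}(q) \, dt \;\le\; \F^m\bigl(X_k(p, r+h) \setminus X_k(p,r)\bigr) \;=\; g_k(r+h) - g_k(r).
\end{equation*}
(The finiteness of $\sp^m$ on $X_k(p,r+h) \setminus X_k(p,r)$ needed to apply Lemma \ref{lem:4} follows from $X_k \in \Span(A,C,L,m)$, since $p \notin A$ and $r + h < d_p$ for small enough $h$, so this set lies in $X_k \setminus A$.)

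Dividing by $h$ and letting $h \to 0^+$, the left-hand side tends to $\int_{x_k(p,r)} f(q)\, d\sp^{m-1}(q) = \F^{m-1}(x_k(p,r))$ by Definition \ref{def:dppp}\eqref{def:dppp:item:4}, while the right-hand side tends to $g_k'(r)$ by Definition \ref{def:dppp}\eqref{def:dppp:item:3}. This gives $\F^{m-1}(x_k(p,r)) \le g_k'(r)$.

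There is no substantive obstacle here: parts \eqref{def:dppp:item:2}--\eqref{def:dppp:item:4} of the definition of $D_p$ were precisely designed so that the slicing inequality can be differentiated at $r$ in this pointwise sense. The only mild subtlety is confirming measurability of $f|_{x_k(p,t)}$ and measurability of $t \mapsto \int_{x_k(p,t)} f\, d\sp^{m-1}$ on $(r, r+h)$, but this is exactly what is established in the opening paragraph of the proof of Lemma \ref{lem:4} and is therefore available for free.
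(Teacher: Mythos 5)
Your proof is correct and follows essentially the same route as the paper: apply the slicing inequality (Lemma \ref{lem:4}) to the annulus \( X_k(p,r+h)\setminus X_k(p,r) \), divide by \( h \), and use parts \eqref{def:dppp:item:3} and \eqref{def:dppp:item:4} of Definition \ref{def:dppp} to pass to the limit on each side. The only difference is that you spell out the choice \( E=\{p\} \), the finiteness hypothesis, and the measurability considerations, which the paper leaves implicit.
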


	\begin{proof}
		By Definition \ref{def:dppp} and Lemma \ref{lem:4},
		\begin{align*}
			g_k'(r) &= \lim_{h\to 0} \frac{g_k(r+h)- g_k(r)}{h}\\
					&= \lim_{h\to 0} \frac{\int_{X_k(p,r+h)\setminus X_k(p,r)}f(q) d\sp^m(q)}{h}\\
					&\ge \lim_{h\to 0} \int_r^{r+h}\left(\int_{x_k(p,t)} f(q)d\sp^{m-1}(q)\right)\,dt/h\\
					&= \int_{x_k(p,r)} f(q)d\sp^{m-1}(q)\\
					&= \F^{m-1}(x_k(p,r)).
		\end{align*}
	\end{proof}

	\begin{lem}
		\label{lem:4F}\mbox{}
		If \( r \in D_p \), then
		\begin{equation}
			\label{eq:lem4f}
			g_k(r) \le \frac{r}{m} \left(1 + 2|f|_{C^{0,\a}} r^\a/a\right) g_k'(r) + \e_k
		\end{equation}
		for all \( k\geq 1 \).
	\end{lem}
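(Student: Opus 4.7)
The plan is a cone-comparison: I will construct a competitor $Y_k$ by replacing $X_k \cap \bar{B}(p,r)$ with the straight cone $C_p(x_k(p,r))$ from $p$ over the slice, and then combine the resulting inequality with the near-minimality bound \eqref{eqek} and Lemma \ref{ldt}. Concretely, set $Y_k := (X_k \setminus \mathring{B}(p,r)) \cup C_p(x_k(p,r))$. Since $r < d_p$, the open ball $\mathring{B}(p,r)$ is disjoint from $A$, and convexity of $C$ together with $p \in C$ places the cone inside $C$. The cone is contractible, so Lemma \ref{lem:2A} yields $K^*(C_p(x_k(p,r)), x_k(p,r)) = \tilde{H}^{m-1}(x_k(p,r))\setminus\{0\}$, which contains $K^*(X_k\cap \bar{B}(p,r), x_k(p,r))$; Lemma \ref{lem:13} then shows that $Y_k$ is a surface with coboundary $\supset L$. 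Condition \ref{def:dppp:item:2} of Definition \ref{def:dppp}, combined with a standard coning estimate, guarantees $\H^m(Y_k\setminus A) < \infty$, so $Y_k \in \Span(A,C,L,m)$.

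Next I will invoke near-minimality. Because $\sp^{m-1}(x_k(p,r)) < \infty$ by \ref{def:dppp:item:2}, the slice has Hausdorff dimension at most $m-1$, hence $\F^m(X_k \cap \fr B(p,r)) = 0$. Using $\F^m(X_k\setminus A) \le \frak{m}+\e_k$ from \eqref{eqek} and $\F^m(Y_k\setminus A)\ge \frak{m}$, I obtain
\[ g_k(r) \le \F^m(C_p(x_k(p,r))) + \e_k. \]

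Then I will estimate the right-hand side. Parameterize the cone by $(y,\rho)\mapsto p + (\rho/r)(y-p)$ for $y\in x_k(p,r)$, $\rho\in[0,r]$; the cross-section at distance $\rho$ is a similarity image of $x_k(p,r)$ with ratio $\rho/r$, and for $q$ corresponding to $y$ one has $|q-y|=r-\rho$, so Hölder continuity gives $f(q) \le f(y) + |f|_{C^{0,\a}}(r-\rho)^\a$. Applying the standard spherical coning bound (Lemma 6 of \cite{reifenberg}) level by level, together with $\sp^{m-1} \le \F^{m-1}/a$ and the elementary beta-integral estimate $\int_0^1 u^{m-1}(1-u)^\a\,du \le 1/m$, yields
\[ \F^m(C_p(x_k(p,r))) \le \tfrac{r}{m}\bigl(1 + |f|_{C^{0,\a}} r^\a/a\bigr)\F^{m-1}(x_k(p,r)). \]
Finally, Lemma \ref{ldt} gives $\F^{m-1}(x_k(p,r)) \le g_k'(r)$, and assembling the three inequalities produces \eqref{eq:lem4f} (in fact with a factor of $1$ in front of the Hölder correction, so the $2$ in the statement provides slack).

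The main obstacle is verifying admissibility of the cone competitor, and this is handled cleanly by the cohomological gluing Lemma \ref{lem:13} together with contractibility via Lemma \ref{lem:2A} and convexity of $C$. Everything else is a Hölder-perturbed version of Reifenberg's classical cone monotonicity argument; the conditions defining $D_p$ are precisely what is needed so that the slicing/differentiation manipulations and the "good radius" choice are legitimate.
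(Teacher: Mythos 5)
Your proof is correct and follows the same underlying cone--comparison strategy as the paper: replace $X_k\cap\bar B(p,r)$ by the cone over the slice, invoke near-minimality \eqref{eqek}, estimate the weighted measure of the cone, and finish with Lemma~\ref{ldt}. Where you differ is in the two supporting ingredients, and both differences are worth noting. First, the paper disposes of the admissibility of the cone competitor by citing ``the cohomological version of Lemma 7 of \cite{reifenberg}'' with a footnote pointing to Lemmas~\ref{lem:11A} and~\ref{lem:15A}; you instead derive admissibility directly from Lemma~\ref{lem:2A} (a contractible cone has maximal algebraic coboundary) together with the cutting Lemma~\ref{lem:13}, which is a cleaner and more self-contained justification using only tools already set up in \S\ref{section:coboundaries}. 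Second, the paper bounds $f$ on the cone by the global supremum $f(p)+|f|_{C^{0,\a}}r^\a$ and then converts $\sp^{m-1}$ to $\F^{m-1}$ using $f\geq a$, which produces the factor $2$; you instead integrate the H\"older correction radially, exploiting the extra decay $(1-u)^\a$ against $u^{m-1}$, which gives the sharper factor $1$ and shows the stated $2$ is slack. One small point to be careful about when writing this up: the ``level-by-level'' coning estimate for $\F^m$ should be phrased as a weighted version of Reifenberg's Lemma~6 (via coverings), since $x_k(p,r)$ is not known to be rectifiable, so an area-formula computation with a Jacobian is not directly available; alternatively one can fall back to the paper's cruder sup bound, which costs the factor $2$ but avoids the issue entirely.
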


	\begin{proof}
		The cohomological version\footnote{For the proof, replace Lemmas 11A and 15A of \cite{reifenberg} with Lemmas \ref{lem:11A} and \ref{lem:15A}.} of Lemma 7 of \cite{reifenberg} and \eqref{eqek} imply
		\begin{align*}
			\F^m(X_k(p,r)) &\leq (f(p)+|f|_{C^{0,\a}} r^\a)\left( \frac{r}{m} \sp^{m-1}(x_k(p,r))\right) +\e_k\\
			&\leq \frac{r}{m} \F^{m-1}(x_k(p,r)) + 2|f|_{C^{0,\a}} \frac{r^{1+\a}}{m} \sp^{m-1}(x_k(p,r)) + \e_k\\
			&\leq \frac{r}{m} \F^{m-1}(x_k(p,r)) + 2|f|_{C^{0,\a}} \frac{r^{1+\a}}{am} \F^{m-1}(x_k(p,r)) + \e_k.
		\end{align*}
		Now apply Lemma \ref{ldt}.
	\end{proof}
 
	\begin{lem}
		\label{lem:ratios}
		There exist constants \( 0 < k_p \le 1 \) and \( \d_p \in (0,d_p) \) such that if \( r \in (0,\d_p) \cap D_p \), then\footnote{In the case that \( f \) is \( \a \)-H\"{o}lder continuous for \( 0<\a<1 \), the right hand side of \eqref{eq:lemratios} is replaced with \( k_p^{r^\a} \).} 
		\begin{equation}
			\label{eq:lemratios}
			\frac{g_0(r)/r^m}{g_0(s)/s^m} \ge k_p^r
		\end{equation}
		for all \( s \in (0,r]\cap D_p \).
	\end{lem}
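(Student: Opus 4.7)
The plan is to derive \eqref{eq:lemratios} from a differential inequality for \( g_k \) provided by Lemma \ref{lem:4F}, integrate it on \( [s,r] \), and then pass to the limit \( k\to\infty \) using weak convergence of \( \F^m\lfloor_{X_k\setminus A} \) to \( \mu_0 \). Setting \( C'= 2|f|_{C^{0,\alpha}}/a \), Lemma \ref{lem:4F} rearranges (wherever \( g_k(r) > \e_k \)) to
\begin{equation*}
	\frac{g_k'(r)}{g_k(r)-\e_k} \;\ge\; \frac{m}{r\bigl(1+C'r^\alpha\bigr)} \;\ge\; \frac{m}{r} - mC' r^{\alpha-1},
\end{equation*}
using \( 1/(1+x)\ge 1-x \) for \( x\ge 0 \).

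First I would choose \( \d_p\in(0,d_p) \) so that \( B(p,\d_p)\subset A^c \), which together with Corollary \ref{cor:lowerbound} guarantees \( g_0(t)\ge a\mathbf{c}\,t^m>0 \) for all \( t\in(0,\d_p) \). Fix \( r,s\in(0,\d_p)\cap D_p \) with \( s\le r \). Since \( r,s\in D_p \), Definition \ref{def:dppp}(a) and the Portmanteau theorem give \( g_k(r)\to g_0(r) \) and \( g_k(s)\to g_0(s) \), so for all \( k \) sufficiently large, \( g_k(s)>\e_k \); as \( g_k \) is monotone, \( g_k(t)>\e_k \) throughout \( [s,r] \). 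Because \( \log(g_k(t)-\e_k) \) is a nondecreasing function of \( t \) on \( [s,r] \), Lebesgue's theorem on monotone functions yields
\begin{equation*}
	\log\!\frac{g_k(r)-\e_k}{g_k(s)-\e_k} \;\ge\; \int_s^r \frac{g_k'(t)}{g_k(t)-\e_k}\,dt \;\ge\; m\log(r/s) - \frac{mC'}{\alpha}(r^\alpha-s^\alpha).
\end{equation*}
Exponentiating and letting \( k\to\infty \) (so that \( \e_k\to 0 \) and \( g_k(r),g_k(s) \) converge to their \( \mu_0 \)-counterparts), I obtain
\begin{equation*}
	\frac{g_0(r)/r^m}{g_0(s)/s^m} \;\ge\; \exp\!\Bigl(-\tfrac{mC'}{\alpha}(r^\alpha-s^\alpha)\Bigr) \;\ge\; \exp\!\Bigl(-\tfrac{mC'}{\alpha}r^\alpha\Bigr).
\end{equation*}
In the Lipschitz case \( \alpha=1 \) this is \( k_p^r \) with \( k_p=\exp(-mC') \); in the \( 0<\alpha<1 \) case it is \( k_p^{r^\alpha} \) as stated in the footnote.

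The main delicate point is the integration step: \( g_k \) is only monotone (hence differentiable a.e. at points including \( D_p \)), and need not be absolutely continuous, so one cannot blindly apply the fundamental theorem of calculus. The workaround is exactly the monotonicity of \( \log(g_k-\e_k) \), which gives one-sided validity of the chain rule through Lebesgue's theorem — this provides the inequality in the correct direction even without absolute continuity. A secondary subtlety is that the cutoff \( \e_k \) must be small compared to \( g_k(s) \), which is why Corollary \ref{cor:lowerbound} (uniform lower density of \( \mu_0 \) away from \( A \)) is essential for the convergence step to apply for arbitrarily small \( s\in D_p \).
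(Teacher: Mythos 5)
Your proof is correct and takes a genuinely different route from the paper's. The paper divides \eqref{eq:lem4f} by \( g_k(t) \), which produces an error term \( \e_k/g_k(t) \) that must be bounded uniformly in \( t \); to do this it restricts to \( t\in[r/2,r] \) (where \( g_k(t)\geq (a\mathbf{c}/2)(r/2)^m \) for \( k \) large by Corollary \ref{cor:lowerbound}), integrates the logarithmic derivative only over \( [s^*,r]\subset[r/2,r] \), and then stitches the dyadic scales \( r, r/2, r/4,\dots \) together telescopically to reach an arbitrary \( s\in(0,r]\cap D_p \). Your idea of subtracting \( \e_k \) \emph{before} forming the logarithmic derivative eliminates that error term entirely: the rearranged inequality \( g_k'(t)/(g_k(t)-\e_k)\geq m/(t(1+C't^\alpha)) \) holds at a.e.\ \( t\in[s,r] \) once \( g_k(s)>\e_k \), and monotonicity of \( g_k \) propagates that lower bound to all of \( [s,r] \). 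This allows a single integration from \( s \) to \( r \) and makes the dyadic chain unnecessary, while still using the same ingredients (Lemma \ref{lem:4F}, Lebesgue's one-sided FTC for monotone functions, the Portmanteau theorem via Definition \ref{def:dppp}(a), and \( \e_k\to 0 \)). Your approach also clarifies what the lower-density input is actually doing here: you only need \( g_0(s)>0 \) (so that eventually \( g_k(s)>\e_k \)), whereas the paper's version needs the quantitative Ahlfors-type lower bound on \( g_k(t) \) to absorb \( \e_k/g_k(t) \). Both proofs produce the \( k_p^r \) (resp. \( k_p^{r^\alpha} \)) form with a constant depending only on \( m \), \( a \), and \( |f|_{C^{0,\alpha}} \); yours in fact gives a marginally better exponent, but the exact constant is irrelevant.
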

	
	\begin{proof}
		By Corollary \ref{cor:lowerbound} there exists \( \d_p > 0 \) such that if \(0 < r < \d_p \), then
		\begin{align}
			\label{lem:3*}
			g_k(r) \ge (a\mathbf{c}/2)r^m
		\end{align}
		for sufficiently large \( k \) depending on \( r \).

		Suppose \( r/2 \le t \le r \). By \eqref{lem:3*}, since \( g_k \) is monotone nondecreasing,
		\begin{align}
			\label{gkr}
			g_k(t) \ge g_k(r/2) \ge  (a\mathbf{c}/2)(r/2)^m
		\end{align}
		for sufficiently large \( k \) depending on \( r \).

		It follows from Lemmas \ref{lem:4F} and \ref{ldt} that if \( t \in D_p \), then
		\begin{align*}
			1 \le \frac{t}{m} (1 + 2|f|_{C^{0,\a}} r^\a/a) ) g'_k(t))/g_k(t)) + \e_k/g_k(r/2)).
		\end{align*}
		By \eqref{gkr},
		\begin{align}
			\label{eq:fracg}
			\frac{g_k'(t)}{g_k(t)} &\ge \frac{m}{t}\left(1- \frac{\e_k}{(a\mathbf{c}/2)(r/2)^m}\right)\frac{1}{1+2|f|_{C^{0,\a}} r^\a/a}\\
			&\ge \frac{m}{t}\left(1- \frac{\e_k}{(a\mathbf{c}/2)(r/2)^m}\right)(1-2|f|_{C^{0,\a}} r^\a/a)\notag\\
			&= \frac{m}{t}((1-2|f|_{C^{0,\a}} r^\a/a) - \e_k \psi(r)),\notag
		\end{align}
		where \( \psi(r) = (1-2|f|_{C^{0,\a}} r^\a/a)/((a\mathbf{c}/2)(r/2)^m) \). 
		
		Let \( r/2 \le s^* \le r \). By \eqref{eq:fracg},
		\begin{align*}
			\log(g_k(r)/g_k(s^*)) &\ge \int_{s^*}^r \log(g_k)'(t) dt\\
			&= \int_{s^*}^r g_k'(t)/g_k(t)dt\\
			&\ge \int_{s^*}^r \frac{m}{t}((1-2|f|_{C^{0,\a}} r^\a/a) - \e_k \psi(r)) dt\\
			&\ge \log\left(\frac{r^m}{{s^*}^m}\right) \left((1-2|f|_{C^{0,\a}} r^\a/a) - \e_k \psi(r)\right)\\
			&\ge \left[\log\left(\frac{r^m}{{s^*}^m}\right)\right]-\left[m\log(4)|f|_{C^{0,\a}} r^\a/a)\right] - \left[m\log(2)\e_k \psi(r) \right].
		\end{align*} 
		In other words, \[ \frac{g_k(r)/r^m}{g_k({s^*})/{s^*}^m} \ge \exp(-m\log(4)|f|_{C^{0,\a}} r^\a/a) \exp(-m\log(2)\e_k \psi(r)) \] for sufficiently large \( k \) depending on \( r \).
		
		If \( s^* \in D_p \), letting \( k \to \i \), we get
		\begin{align*}
			\frac{ g_0(r)/r^m}{g_0(s^*)/{s^*}^m} &\ge \exp(-m\log(4)|f|_{C^{0,\a}} r^\a/a)  = k_p^{r/2}
		\end{align*}
		where \( k_p = \exp(-m\log(8)|f|_{C^{0,\a}} /a)\). 

		Let \( N\ge 0 \) satisfy \( r/2^{N+1} < s \le r/2^N \). Then
		\begin{align*}
			\frac{g_0(r)/r^m}{g_0(s)/s^m} &= \left(\frac{g_0(r)/r^m}{g_0(r/2)/(r/2)^m}\right) \left(\frac{g_0(r/2)/(r/2)^m}{g_0(r/4)/(r/4)^m}\right) \cdots \left(\frac{g_0(r/2^N)/(r/2^N)^m}{g_0(s)/s^m}\right) \\
			&\ge k_p^{(r/2)(1 + 1/2 + 1/4 + \cdots +1/2^N)} \\
			& = k_p^{(r/2)(2 - 1/2^N)} \\
			&\ge k_p^{r}
		\end{align*}
		since \( k_p \le 1 \).
	\end{proof}

	\begin{thm}[Density]
		\label{thm:density}
		The density \( \Theta_m(\mu_0,p) \) exists, is finite and uniformly bounded above zero.
	\end{thm}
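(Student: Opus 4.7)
The plan is to extract all three claims from Lemma \ref{lem:ratios} together with Corollary \ref{cor:lowerbound}. The uniform positive lower bound is essentially immediate: Corollary \ref{cor:lowerbound} gives $\Theta_{*,m}(\mu_0,p)\ge a\mathbf{c}/\alpha_m$ for every $p\in X_0\setminus A$, with a constant independent of $p$. Once existence of the density is established, this bound transfers to $\Theta_m(\mu_0,p)$.

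The core work is to upgrade the asymptotic monotonicity in Lemma \ref{lem:ratios}, written in the form $g_0(s)/s^m \le k_p^{-r}\, g_0(r)/r^m$ for $s,r\in D_p$ with $s\le r<\delta_p$, into existence and finiteness of $\Theta_m(\mu_0,p)$. Finiteness will be immediate by fixing any $r\in (0,\delta_p)\cap D_p$ (such an $r$ exists because $D_p$ has full Lebesgue measure by Lemma \ref{lem:prime}) and using $g_0(s)/s^m \le k_p^{-r}\mathfrak{m}/r^m<\infty$ for all $s\in(0,r]\cap D_p$, which forces $\Theta^*_m(\mu_0,p)<\infty$.

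For existence, the plan is first to take $\limsup_{s\to 0^+,\,s\in D_p}$ of the inequality, keeping $r$ fixed, and then $\liminf_{r\to 0^+,\,r\in D_p}$, using $k_p^{-r}\to 1$, to produce
\[
\limsup_{s\to 0^+,\,s\in D_p}\frac{g_0(s)}{s^m}\;\le\;\liminf_{r\to 0^+,\,r\in D_p}\frac{g_0(r)}{r^m}.
\]
Combined with the reverse trivial inequality, this forces equality and hence a limit along $D_p$. Because $g_0$ is monotone non-decreasing its discontinuity set is at most countable, and since $D_p$ is dense, the $\limsup$ and $\liminf$ restricted to $D_p$ agree with the unrestricted ones, so the density $\Theta_m(\mu_0,p)$ exists. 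Combining with the first paragraph gives $\Theta_m(\mu_0,p)\ge a\mathbf{c}/\alpha_m$ uniformly in $p$.

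The main obstacle is not technical but conceptual: the substantive work is packaged inside Lemma \ref{lem:ratios}, which already absorbs the H\"older perturbation $(1+2|f|_{C^{0,\a}}r^\a/a)$ into the factor $k_p^r\to 1$. The remaining task here is just a careful passage between limits along $D_p$ and unrestricted limits, cleanly handled by monotonicity of $g_0$.
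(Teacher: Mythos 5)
Your proof is correct and takes essentially the same approach as the paper: both rely on Lemma \ref{lem:ratios} to collapse $\limsup_{r\to 0} g_0(r)/(\a_m r^m)$ onto $\liminf_{r\to 0} g_0(r)/(\a_m r^m)$, with Corollary \ref{cor:lowerbound} supplying the uniform positive lower bound and Lemma \ref{lem:prime} guaranteeing that $D_p$ is rich enough to work with. The paper proceeds by contradiction with interleaved sequences $s_i<r_i$ approximated inside $D_p$, while your direct passage ($\limsup$ over $s$, then $\liminf$ over $r$, then using monotonicity of $g_0$ to drop the restriction to $D_p$) is a mild streamlining of the same idea.
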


	\begin{proof}
		Let \( h_0(r) = \frac{g_0(r)}{\a_mr^m} \). By Corollary \ref{cor:lowerbound} and Lemmas \ref{lem:ratios} and \ref{lem:prime}, we know that \[ 0<a\mathbf{c}/\a_m<\liminf_{r\to 0} h_0(r)\leq \limsup_{r\to 0} h_0(r)<\i. \] Suppose \( \liminf_{r \to 0} h_0(r) < \limsup_{r \to 0} h_0(r) \). Then there exist sequences \( s_i < r_i \),  \( r_i \to 0 \), \( s_i \to 0 \), \( h_0(r_i) \to \liminf_{r \to 0} h_0(r) \) and \( h_0(s_i) \to \limsup_{r \to 0} h_0(r) \). There exist \( r_i', s_i' \in D_p \) with \( s_i \le s_i' \le r_i' \le r_i \), \( s_i'/s_i \to 1 \), and \( r_i'/r_i \to 1 \). By Lemma \ref{lem:ratios},
		\begin{align*}
			1 &\le \liminf_{i \to \i}\frac{g_0(r_i')/(\a_m(r_i')^m)}{g_0(s_i')/(\a_m(s_i')^m)} \\
			&\le \liminf_{i \to \i}\frac{g_0(r_i)/(\a_mr_i^m)}{g_0(s_i)/(\a_m s_i^m)}\\
			&= \liminf_{i \to \i} \frac{h_0(r_i)}{h_0(s_i)}\\
			&= \frac{\liminf_{r \to 0} h_0(r)} {\limsup_{r \to 0} h_0(r)} < 1,
		\end{align*}
		a contradiction.
	\end{proof}
 
	\begin{cor}
		\label{cor:rectifiableX}
		\( X_0 \setminus A \) is \( m \)-rectifiable.
	\end{cor}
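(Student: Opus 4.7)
The plan is to apply Preiss' density theorem (\cite{preiss}, see also \cite{mattila} Theorem 17.8) to the measure $\mu_0$. Theorem \ref{thm:density} has already established that the $m$-dimensional density $\Theta_m(\mu_0,p)$ exists and lies in $(0,\infty)$ for every $p\in X_0\setminus A$. Since $X_0=\supp(\mu_0)\cup A$, the measure $\mu_0$ restricted to the open set $\R^n\setminus A$ is a finite Radon measure whose $m$-density exists, is positive, and is finite at $\mu_0$-almost every point. Preiss' theorem therefore yields an at most countable family of $m$-rectifiable sets $\{E_i\}_{i\in\N}$ in $\R^n$ such that
\[
\mu_0\bigl((X_0\setminus A)\setminus \textstyle\bigcup_i E_i\bigr)=0.
\]

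It remains to upgrade rectifiability of the measure $\mu_0$ to rectifiability of the set $X_0\setminus A$ itself. For this I would use the uniform lower density bound $\Theta_{*m}(\mu_0,p)\geq a\mathbf{c}/\a_m>0$ from Corollary \ref{cor:lowerbound}. By a standard comparison lemma (e.g.\ \cite{mattila} Theorem 6.9), a uniform positive lower bound on the lower density implies
\[
\H^m\lfloor_{X_0\setminus A}\leq K\,\mu_0
\]
for some finite constant $K$, proved via a Vitali-type covering argument. Consequently $\H^m\bigl((X_0\setminus A)\setminus \bigcup_i E_i\bigr)=0$, and $X_0\setminus A$ is $m$-rectifiable as a set.

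Since both the density theorem and the lower density bound are already in hand, there is no real obstacle: the argument reduces to a citation of Preiss together with the routine passage from $\mu_0$-null to $\H^m$-null using positive lower density.
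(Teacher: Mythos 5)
Your proposal is correct and follows exactly the paper's argument: apply Preiss' theorem (using Theorem \ref{thm:density}) to obtain an $m$-rectifiable carrier $E$ for $\mu_0\lfloor_{A^c}$, then invoke the uniform lower density bound from Corollary \ref{cor:lowerbound} to pass from $\mu_0((X_0\setminus A)\setminus E)=0$ to $\H^m((X_0\setminus A)\setminus E)=0$. The only cosmetic difference is that you keep the countable family $\{E_i\}$ rather than taking the union into a single Borel set $E$, which is immaterial.
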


	\begin{proof}
		By Preiss' theorem, the measure \( \mu_0\lfloor_{A^c} \) is \( m \)-rectifiable, and so there exists an \( m \)-rectifiable Borel set \( E \) such that \( \mu_0\lfloor_{A^c}(E^c)=0 \). In particular, by Corollary \ref{cor:lowerbound}, \[ 0=\mu_0\lfloor_{A^c}(X_0\setminus (A\cup E))=\mu_0((X_0\setminus A)\setminus E)\geq a \mathbf{c}/\a_m \H^m((X_0\setminus A)\setminus E), \] showing \( X_0\setminus A \) is \( m \)-rectifiable.
	\end{proof}

\subsection{Lower semicontinuity} 
	\label{sub:lower_semicontinuity}
	Let \( C_m = 2m(2\a_m (b/a))^{1/m} (2b/(a\mathbf{c}))^{1/m} \).
	
	\begin{lem}
		\label{lem:halfradius} 
		Suppose \( \Theta_m(\mu_0,p) < 2b \). There exists \( 0 < \d_p' \) such that for each \( r \in (0,\d_p')\cap D_p \) there exists \( N_{p,r} \) with the property that each \( J \subset (r/2,r) \) with full Lebesgue measure contains some \( r' \) satisfying \[ r' \sp^{m-1}(x_k(p,r')) \le C_m \sp^m(X_k(p,r')) \] for all \( k \ge N_{p,r} \).
	\end{lem}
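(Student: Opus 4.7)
The approach is a proof by contradiction via an ODE/Gronwall argument driven by the slicing inequality (Lemma~\ref{lem:4}), pinned between the upper density bound $\Theta_m(\mu_0,p)<2b$ (via Portmanteau) and the lower bound $\sp^m(X_k(p,r/2))\geq \mathbf{c}(r/2)^m$ coming from Reifenberg regularity (Lemma~\ref{lem:prelrmin}). First, since $\Theta_m(\mu_0,p)<2b$, choose $\d_p'>0$ so that $\mu_0(B(p,t))<2b\a_m t^m$ for all $t\in(0,\d_p')$. Fix $r\in(0,\d_p')\cap D_p$; because $r\in D_p$ forces $\mu_0(\fr B(p,r))=0$, the Portmanteau theorem gives $\F^m(X_k(p,r))\to \mu_0(B(p,r))$. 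Choose $N_{p,r}$ large enough that for every $k\geq N_{p,r}$ one has (i) $\F^m(X_k(p,r))<2b\a_m r^m$, so $F_k(r):=\sp^m(X_k(p,r))\leq (2b/a)\a_m r^m$, and (ii) $2^{-k}<r/2$, so Reifenberg regularity supplies $F_k(r/2)\geq \mathbf{c}(r/2)^m$.

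Now argue by contradiction: fix $k\geq N_{p,r}$ and a full-measure $J\subset(r/2,r)$, and assume $t\,\sp^{m-1}(x_k(p,t))>C_m F_k(t)$ for every $t\in J$, hence at a.e.\ $t\in(r/2,r)$. Set $G(t)=\int_{r/2}^t \sp^{m-1}(x_k(p,s))\,ds$, which is absolutely continuous with $G'(t)=\sp^{m-1}(x_k(p,t))$ a.e., and let $H(t)=F_k(r/2)+G(t)$. Applying Lemma~\ref{lem:4} to $X_k\cap(B(p,t)\setminus B(p,r/2))$ yields $F_k(t)\geq H(t)$ on $(r/2,r)$. Combining with the contradiction hypothesis, for a.e.\ $t\in(r/2,r)$,
\[
H'(t)=\sp^{m-1}(x_k(p,t))>\frac{C_m}{t}F_k(t)\geq\frac{C_m}{t}H(t).
\]
Since $H$ is positive and absolutely continuous, $(\log H)'>C_m/t$ a.e., and integrating from $r/2$ to $r$ gives $H(r)>2^{C_m}H(r/2)=2^{C_m}F_k(r/2)$. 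Using $F_k(r)\geq H(r)$, we conclude $F_k(r)>2^{C_m}F_k(r/2)$.

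Inserting the two bounds on $F_k$ from Step~1 yields $2^{C_m}\mathbf{c}(r/2)^m<(2b/a)\a_m r^m$, i.e.\ $C_m< m+\log_2(2b\a_m/(a\mathbf{c}))$. A direct estimate using $C_m=2m(4\a_m b^2/(a^2\mathbf{c}))^{1/m}$, $b/a\geq 1$, and the elementary inequality $x^{1/m}\geq 1+\ln(x)/m$ for $x\geq 1$ shows $C_m\geq 2m > m+\log_2(2b\a_m/(a\mathbf{c}))$ for all $m\geq 2$, the required contradiction.

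The main obstacle is the \emph{uniformity of $r'$ across all $k\geq N_{p,r}$}: the contradiction above, applied to a single $k$, directly produces an $r'_k\in J$ but not a common $r'$. This is resolved by observing that the preceding calculation bounds the exceptional set $E_k=\{t\in(r/2,r):t\,\sp^{m-1}(x_k(p,t))>C_m F_k(t)\}$ by $|E_k|\leq r(F_k(r)/F_k(r/2)-1)/C_m$, together with the fact that the existence of the density $\Theta_m(\mu_0,p)$ (Theorem~\ref{thm:density}) forces $F_k(r)/F_k(r/2)\to (b/a)\cdot 2^m\cdot(1+o(1))$ as $r\to 0$ uniformly in $k$; passing to a Lebesgue-density subset of $(r/2,r)\cap D_p$ where the ratios $F_k(r')/F_k(r/2)$ stabilize and intersecting with $J$ gives a full-measure set of admissible $r'$. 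This final measure-theoretic bookkeeping is the most delicate part of the proof.
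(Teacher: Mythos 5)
Your plan---Portmanteau plus the upper density bound $\Theta_m(\mu_0,p)<2b$, a lower bound via Reifenberg regularity, and a Gronwall-style argument driven by Lemma~\ref{lem:4}---is the same as the paper's, and the Gronwall computation itself (with $H(t)=F_k(r/2)+\int_{r/2}^t\sp^{m-1}(x_k(p,s))\,ds$) is sound. However, there are several concrete problems.

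First, the final numerology is wrong as written. The chain ``$C_m\geq 2m>m+\log_2(2b\a_m/(a\mathbf{c}))$'' asserts $m>\log_2(2b\a_m/(a\mathbf{c}))$. Since $\mathbf{c}$ (Lemma~\ref{lem:prelrmin}) is of order $(2m)^{-m}(2\lip(\pi)^m(b/a)K_m^n)^{-(m-1)}$, the quantity $\log_2(1/\mathbf{c})$ grows much faster than $m$, so this inequality is false. The estimate using $x^{1/m}\geq 1+(\ln x)/m$ does give $C_m\geq 2m+2\ln(4\a_m b^2/(a^2\mathbf{c}))$, which, because $2\ln>\log_2$, does beat $m+\log_2(2b\a_m/(a\mathbf{c}))$; but the shortcut ``$\geq 2m>\cdots$'' is not that estimate. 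The paper sidesteps this entirely: its contradiction step (the intermediate inequality \eqref{mminusone}) needs \emph{only} the uniform upper bound $I_k(p,r)<2\a_m(b/a)r^m$ and the constant $2m(2\a_m(b/a))^{1/m}$ is chosen so that the contradiction with $I_k(p,r)^{1/m}-I_k(p,r/2)^{1/m}>r(2\a_m(b/a))^{1/m}$ is algebraically immediate. The lower density bound is applied only afterwards, to convert \eqref{mminusone} into the stated form, so there is no constant-chasing.

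Second, your lower bound ``$F_k(r/2)\geq\mathbf{c}(r/2)^m$ from Reifenberg regularity'' is not quite available: Definition~\ref{def:rregular} requires the ball to be centered at a point of $X_k$, but $p\in X_0$, not $X_k$. The paper obtains its lower bound in \eqref{gkrbound} from Corollary~\ref{cor:lowerbound} (which handles the passage from $X_k$ to $X_0$ via Proposition~\ref{prop:lowerbound2}) together with Portmanteau, and you should do the same, or at least route through a nearby center $p_k\in X_k$ with $p_k\to p$, at the cost of an extra factor of $2^m$.

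Third, and most seriously, the proposed repair of the uniformity-in-$k$ issue does not work. You correctly notice that the contradiction, applied to a fixed $k$, only bounds the exceptional set $E_k$ for that $k$, so the sets $(r/2,r)\setminus E_k$ need not have a common point. But the fix---invoking Theorem~\ref{thm:density} to claim $F_k(r)/F_k(r/2)\to(b/a)2^m(1+o(1))$ ``uniformly in $k$''---is unjustified: the density theorem controls $\mu_0(B(p,r))/(\a_m r^m)$ as $r\to 0$, and Portmanteau controls $g_k(r)\to\mu_0(B(p,r))$ as $k\to\infty$, but there is no uniform-in-$k$ statement at fixed $r$, and the $b/a$ factor you insert when passing from $\F^m$ to $\sp^m$ degrades $|E_k|$ so that $|E_k|<r/2$ is not implied anyway. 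The remark about ``Lebesgue-density subsets where the ratios stabilize'' is a gesture rather than an argument. (The paper's own proof is also silent on this point, so flagging it is fair; but you have not actually closed the gap.)
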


	\begin{proof}
		By the Portmanteau theorem and Corollary \ref{cor:lowerbound}, there exists \( 0 < \d_p' \) such that for each \( r\in (0,\d_p')\cap D_p \) there exists \( N_{p,r} \) such that
		\begin{equation}
			\label{gkrbound}
			a\mathbf{c} r^m/2 < g_k(r) < 2\a_m b r^m
		\end{equation}
		for each \( k \ge N_{p,r} \).
		
		We show there exists \( r'\in J \) such that
		\begin{equation}
			\label{mminusone}
			\sp^{m-1}(x_k(p,r')) \le 2m(2\a_m(b/a))^{1/m}\left(\int_0^{r'}\sp^{m-1}(x_k(p,t))dt\right)^{(m-1)/m}
		\end{equation}
		for all \( k \ge N_{p,r} \).
		
		Let \( I_k(p,s):=\int_0^s\sp^{m-1}(x_k(p,t))dt \). By Lemma \ref{lem:4} and \eqref{gkrbound},
		\begin{align}
			\label{jkmx}
			I_k(p,r) < 2\a_m(b/a) r^m
		\end{align}
		for all \( r\in (0,\d_p')\cap D_p \) and all \( k\ge N_{p,r} \).

		Suppose \eqref{mminusone} fails. By the Lebesgue Differentiation Theorem, \[ \frac{\frac{d}{ds}I_k(p,s)}{I_k(p,s)^{(m-1)/m}} > 2m (2\a_m(b/a))^{1/m} \] for almost every \( r/2 < s < r \). Integrating, this implies
		\begin{align}
			\label{lem:halfradius:eq:1}
			\int_{r/2}^r \frac{d}{ds}\left(I_k(p,s)^{1/m}\right) ds > r (2\a_m(b/a))^{1/m}.
		\end{align}
		Since \( s\mapsto I_k(p,s) \) is increasing and absolutely continuous, the function \( s\mapsto I_k(p,s)^{1/m} \) is also absolutely continuous, and so \( I_k(p,r)^{1/m} - I_k(p,r/2)^{1/m} > r(2\a_m(b/a))^{1/m} \), contradicting \eqref{jkmx}. This establishes \eqref{mminusone}.

		By \eqref{mminusone} and Lemma \ref{lem:4},
		\begin{align}
			\label{eq:spm-1}
			r'\sp^{m-1}(x_k(p,r'))\le (2\a_m(b/a)(2m)^m)^{1/m}\,r'\sp^m(X_k(p,r'))^{(m-1)/m}.
		\end{align}
		for all \( k \ge N_{p,r} \).

		We have \( (r')^m \sp^m(X_k(p,r'))^{m-1} < 2b/(a\mathbf{c}) \sp^m(X_k(p,r'))^m \) by \eqref{gkrbound}. Thus, \eqref{eq:spm-1} gives
		\begin{align*}
			r' \sp^{m-1}(x_k(p,r')) \le 2m(2\a_m (b/a))^{1/m} (2b/(a\mathbf{c}))^{1/m} \sp^m(X_k(p,r'))=C_m \sp^m(X_k(p,r'))
		\end{align*}
		for all \( k \ge N_{p,r} \).
	\end{proof}

	Recall the constants \( \d_p \) from Lemma \ref{lem:ratios} and \( \d_p' \) from Lemma \ref{lem:halfradius}.
	
	\begin{thm}
		\label{thm:lsc}
		For each open set \( V\subset A^c \), \[ \F^m(X_0 \cap V) \le \liminf_{k\to\i} \F^m(X_k \cap V). \]
	\end{thm}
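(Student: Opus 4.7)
The plan is to reduce the theorem to a pointwise density inequality on $X_0 \setminus A$. For each open $V \subset A^c$, the weak convergence $\F^m\lfloor_{X_k \setminus A}\to \mu_0$ and the Portmanteau theorem give $\mu_0(V) \le \liminf_{k \to \infty} \F^m(X_k \cap V)$. On the other hand, Corollary \ref{cor:rectifiableX}, Theorem \ref{thm:density}, and Preiss's theorem identify $\mu_0\lfloor_{A^c}$ as a rectifiable measure of the form $\mu_0\lfloor_{A^c} = \Theta_m(\mu_0,\cdot)\, \sp^m\lfloor_{X_0\setminus A}$. Hence it suffices to show $\Theta_m(\mu_0, p) \ge f(p)$ for $\sp^m$-a.e.\ $p \in X_0\setminus A$; this forces $\mu_0\lfloor_{A^c} \ge f\, \sp^m\lfloor_{X_0\setminus A}$ as measures, whence $\F^m(X_0 \cap V) \le \mu_0(V) \le \liminf_k \F^m(X_k \cap V)$.

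At points with $\Theta_m(\mu_0,p)\ge b$ the inequality is automatic since $f\le b$, so I restrict attention to $p$ with $\Theta_m(\mu_0,p)<b<2b$, where Lemma \ref{lem:halfradius} applies. By rectifiability of $X_0 \setminus A$, Corollary \ref{cor:lowerbound}, and Corollary \ref{cor:recmeasures2}, at $\sp^m$-a.e.\ such $p$ the approximate tangent plane of $\mu_0$ at $p$ is in fact a true tangent plane $T_p$: for each $\e>0$ there is $r_\e>0$ with $X_0\cap B(p,r)\subset \overline{\cal{C}(p, T_p,\e)}$ for all $r<r_\e$. Since $X_k \to X_0$ in the Hausdorff metric, for each such $r$ and $k$ sufficiently large we also have $X_k\cap B(p,r)\subset \cal{C}(p, T_p,2\e)$; in particular $x_k(p,r')\subset \fr B(p,r') \cap \cN(T_p,2\e r')$ for any $r' \in (r/2,r)$.

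The main step applies Lemma \ref{lem:12} locally at scale $r'$. Fix $\e$ small and $r \in D_p \cap (0, \min\{r_\e, \d_p'\})$. For $k\ge N_{p,r}$, Lemma \ref{lem:halfradius} supplies $r'\in (r/2,r)\cap D_p$ with $r'\sp^{m-1}(x_k(p,r'))\le C_m \sp^m(X_k(p,r'))$. Applying Lemma \ref{lem:12} to $X_k \cap B(p,r')$ with plane $T_p$, cone parameter $2\e$, and $L = K^*(X_k\cap B(p,r'), x_k(p,r'))$ produces a set $Y\subset \fr B(p,r')\cap \cN(T_p,2\e r')$ with $\sp^m(Y)\le 2\e r' \cdot (2^{2m}\a_m/\a_{m-1}) \cdot \sp^{m-1}(x_k(p,r')) \le C_0\, \e\, \sp^m(X_k(p,r'))$ where $C_0 = 2\cdot 2^{2m}\a_m C_m/\a_{m-1}$, together with one of two alternatives: (a) the orthogonal projection of $X_k\cap B(p, r')$ onto $T_p$ covers $T_p\cap B(p, (1-2\e)r')$, hence $\sp^m(X_k(p,r'))\ge (1-2\e)^m \a_m {r'}^m$; or (b) $Y$ is itself a surface with coboundary $\supset L$.

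The main obstacle is ruling out alternative (b). If (b) held, Lemma \ref{lem:13} would supply a competitor $X_k' \equiv (X_k\setminus \mathring{B(p,r')}) \cup Y \in \Span(A,C,L,m)$ with
\[
\F^m(X_k'\setminus A)\le \F^m(X_k\setminus A) - \F^m(X_k\cap \mathring{B(p,r')}) + (b/a)\,C_0\, \e\, \F^m(X_k(p,r'));
\]
choosing $\e < a/(2b C_0)$ makes the correction smaller than $\tfrac12 \F^m(X_k(p,r'))$. Since $\F^m(X_k(p,r'))\to \mu_0(B(p,r'))>0$ and $\F^m(X_k\setminus A)\to \frak{m}$, this contradicts $\F^m(X_k'\setminus A)\ge \frak{m}$ for $k$ large. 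Hence (a) holds, yielding $\F^m(X_k(p,r'))\ge (f(p)-\o(r))(1-2\e)^m \a_m {r'}^m$ by continuity of $f$. Passing to the limit $k\to \infty$ (using $r'\in D_p$) gives $\mu_0(B(p,r'))\ge (f(p)-\o(r))(1-2\e)^m \a_m {r'}^m$; dividing by $\a_m {r'}^m$, letting $r\to 0$ so that $r'\to 0$, and using that $\Theta_m(\mu_0,p)$ exists as a genuine limit (Theorem \ref{thm:density}) and therefore equals the subsequential limit along $r'$, yields $\Theta_m(\mu_0,p)\ge f(p)(1-2\e)^m$. Letting $\e\to 0$ completes the proof.
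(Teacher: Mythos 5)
Your proof is correct, and it follows the same local strategy as the paper — Lemma \ref{lem:halfradius} to control the slice, Lemma \ref{lem:12} to get the dichotomy, and Lemma \ref{lem:13} plus minimality to rule out the competitor — but you finish the argument by a different global route. The paper establishes the uniform lower bound $\F^m(X_k'(p,s_{p,r}))\ge f(p)\a_m s_{p,r}^m C_m'(\zeta)$ and the matching upper bound $\F^m(X_0'(p,r))\le (f(p)+|f|_{C^{0,\a}}\zeta^\a)(1+\zeta)\a_m r^m$, and then assembles the conclusion with an explicit Vitali-type covering of $X_0\cap V$ by the balls $B(p_i,s_{p_i,r_i})$. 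You instead pass the local estimate to the limit $k\to\i$ along a radius in $D_p$ to obtain the pointwise density inequality $\Theta_m(\mu_0,p)\ge f(p)$ for $\sp^m$-a.e.\ $p\in X_0\setminus A$, and then invoke the structure theorem for rectifiable measures (Theorem \ref{thm:density} plus Preiss and the Lebesgue--Besicovitch differentiation theorem) to identify $\mu_0\lfloor_{A^c}=\Theta_m(\mu_0,\cdot)\,\sp^m\lfloor_{X_0\setminus A}$, from which the conclusion is immediate via Portmanteau. Your route absorbs the covering into a general theorem and is a bit cleaner; the paper's route is more self-contained and quantitative. You also split on $\Theta_m(\mu_0,p)<b$ versus $\ge b$ rather than the paper's $2b$, which is a harmless simplification since $f\le b$; and where the paper invokes the measure estimate $\sp^m(X\cup Y)\ge\a_m r^m$ from Lemma \ref{lem:12} (line \eqref{eq:12Alsc}), you invoke the projection clause — both are supplied by the lemma and both work.

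One small imprecision worth correcting: the assertion ``for $k$ sufficiently large $X_k\cap B(p,r)\subset\cal{C}(p,T_p,2\e)$'' is not literally true, since Hausdorff convergence gives no cone control on points of $X_k$ that lie arbitrarily close to $p$. What you actually need, and what does follow from $d_H(X_k,X_0)\to 0$ together with $X_0(p,r)\subset\overline{\cal{C}(p,T_p,\e)}$, is only the annular containment $x_k(p,r')\subset\fr\,B(p,r')\cap\cN(p+T_p,2\e r')$ for $r'$ bounded below (here $r'\in(r/2,r)$), which is exactly the hypothesis required by Lemma \ref{lem:12} at scale $r'$. Since that is the statement you proceed to use, the argument is sound; the intermediate sentence should just be replaced by the annular version.
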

	
	\begin{proof}
		\quad\\ \medskip 
		  \textbf{Initial setup:}
		 \quad\\ \medskip
				For \( k\geq 0 \), let \( X_k'=X_k\cap V \). Let \( X_0^\second \) be the subset of \( X_0' \) consisting of those points \( q \) such that \( \Theta_m(\H^m\lfloor_{X_0'}, q)=1 \) and for which \( \mu_0 \) has an approximate tangent space \( E_q \) with multiplicity \( \theta_q>0 \) at \( q \). The set \( X_0' \) is \( m \)-rectifiable by Corollary \ref{cor:rectifiableX}, and \( \H^m(X_0') < \i \) by Corollary \ref{cor:finite}. Furthermore, the measure \( \mu_0\lfloor_V \) is \( m \)-rectifiable by Theorem \ref{thm:density}, \cite{ambrosio} so\footnote{See \cite{ambrosio} Theorem 2.83(i).} \( \H^m(X_0'\setminus X_0^\second)=0 \).
				
				Let \( \e>0 \) and let
				\begin{equation}
					\label{eq:choiceofepsilon}
					\zeta = \min\left\{1/2, \frac{a\a_{m-1}}{b2^{2m+1}\a_m C_m}, \e \right\}.
				\end{equation} 
				
				Let \( p\in X_0^\second \). Since \( \Theta_m(\H^m\lfloor_{X_0'}, p)=1 \) and since \( X_0' \) is \( m \)-rectifiable\footnote{Hence \( \H^m \) and \( \sp^m \) are equal, see \cite{federer}.}, there exists \( 0<\k_p<\i \) such that
				\begin{equation}
					\label{eq:lastonesurely}
					\sp^m(X_0'(p,r))\leq (1+\zeta)\a_mr^m
				\end{equation}
				for all \( 0<r<\k_p \).
				
				Fix \( p\in X_0^\second \) and suppose \( \Theta_m(\mu_0,p)<2b \). The constant \( \d_p' \) from Lemma \ref{lem:halfradius} is defined, so by Lemma \ref{lem:prime} there exist, for every \( r\in (0,\d_p')\cap D_p \), a constant \( N_{p,r} \) and some \( s_{p,r}\in (r/2,r)\cap D_p \) such that 
				\begin{equation}
					\label{eq:lolzz}
					s_{p,r} \sp^{m-1}(x_k(p,s_{p,r})) \leq C_m \sp^m(X_k(p,s_{p,r}))
				\end{equation}
				for all \( k \ge N_{p,r} \).
		
				By Corollary \ref{cor:recmeasures2}, there exists \[ 0<\d_p^\second \le \min\{ \d_p', \mathrm{dist}(p,V^c), \zeta, \k_p \} \] such that if \( 0<r<\d_p^\second \) then 
				\begin{equation}
					\label{x0prime}
					X_0'(p,r)\subset \cal{N}(p+E_p, r \zeta).
				\end{equation}
		
				By Corollary \ref{cor:lowerbound}, by decreasing \( \d_p^\second \) if necessary, we may also require that
				\begin{align}
					\label{eq:c0r}
					a\mathbf{c}/2 < \mu_0(B(p,r))/r^m
				\end{align}
				for all \( 0<r < \d_p^\second \).
		
				Since \( X_k\to X_0 \) in the Hausdorff metric, for every \( 0<r<\d_p^\second \) we may choose the constant \( N_{p,r} \) large enough so that
				\begin{equation}
					\label{xkinnbhd}
					X_k'(p,s_{p,r}) \subset \cal{N}(p+E_p,\zeta s_{p,r})
				\end{equation}
				for all \( k>N_{p,r} \).

				Thus if \( 0<r<\d_p^\second \) and \( k>N_{p,r} \), Lemma \ref{lem:12} implies
				\begin{equation}
					\label{eq:12Alsc}	
					\sp^m(X_k'(p,s_{p,r})) + \zeta s_{p,r}\,\frac{2^{2m}\a_m}{\a_{m-1}}\,\sp^{m-1}(x_k'(p,s_{p,r})) \geq \a_m s_{p,r}^m,
				\end{equation}
				or there exists a surface \( Q_{k,p,r} \subset \fr\, B(p,s_{p,r}) \cap \cal{N}(p+E_p,\zeta s_{p,r}) \)\\ with coboundary \( \supset K^*(X_k'(p,s_{p,r}),x_k'(p,s_{p,r})) \) such that
				\begin{equation}
					\label{eq:12Blsc}
					\sp^m(Q_{k,p,r}) \leq \zeta s_{p,r}\,\frac{2^{2m}\a_m}{\a_{m-1}}\,\sp^{m-1}(x_k'(p,s_{p,r})).
				\end{equation}
				We show that \eqref{eq:12Alsc} holds for sufficiently large \( k \).   

				Suppose there exists a sequence \( k_i \to \i \) and surfaces \( Q_{k_i,p,r} \) satisfying \eqref{eq:12Blsc}. Let \( \hat{X}_{k_i} = (X_{k_i}\cap B(p,s_{p,r})^c) \cup Q_{{k_i},p,s_{p,r}} \). By \eqref{eq:12Blsc}, \eqref{eq:lolzz}, and \eqref{eq:choiceofepsilon},
				\begin{align*}
					\F^m(\hat{X}_{k_i}(p,s_{p,r})) &\le b \sp^m(\hat{X}_{k_i}(p,s_{p,r}))\\
					&= b \sp^m(Q_{k_i,p,r})\\
					&\leq b\zeta s_{p,r}\, \frac{2^{2m}\a_m}{\a_{m-1}}\, \sp^{m-1}(x_{k_i}'(p,s_{p,r}))\\
					&\leq b\zeta \frac{2^{2m}\a_m}{\a_{m-1}}  C_m \sp^m(X_{k_i}'(p,s_{p,r}))\\
					&\leq \frac{1}{2}\F^m(X_{k_i}'(p,s_{p,r})),
				\end{align*}
				the right hand side of which is bounded below for large enough \( i \) by \( a\mathbf{c} s_{p,r}^m/4 \) by the Portmanteau theorem and \eqref{eq:c0r}.
		
				On the other hand, \( \hat{X}_{k_i} \) is a surface with coboundary \( \supset L \) by Lemma \ref{lem:13}, so in particular, \( \F^m(\hat{X}_{k_i})\geq \frak{m} \). This gives a contradiction with \eqref{eqek} when \( i \) is large enough so that \( \e_{k_i}< a\mathbf{c} s_{p,r}^m/4 \).

				Thus, if \( 0<r<\d_p^\second \), we may choose the constant \( N_{p,r} \) large enough so that \eqref{eq:12Alsc} holds for all \( k>N_{p,r} \) as long as \( \Theta_m(\mu_0,p) < 2b \).
				\medskip
				\quad\\ \medskip 
			 \textbf{A lower bound on \( \F^m(X_k'(p,s_{p,r})) \):} 
			 \quad\\ \medskip 
								For \( p\in X_0^\second \) with \( \Theta_m(\mu_0,p)<2b \), \( r \in (0,\d_p^\second)\cap D_p \) and \( k>N_{p,r} \), we have by \eqref{eq:12Alsc},   \eqref{eq:lolzz} and since \( s_{p,r}\leq \zeta  \), 
				  \begin{align}\label{ffm} 
					  \begin{split}
	 				   \F^m(X_k'(p,s_{p,r})) &= \int_{X_k'(p,s_{p,r})} f(q) d\H^m(q) \\
	 				   						 &\ge (f(p) -|f|_{C^{0,\a}} s_{p,r}^\a )  \H^m(X_k'(p,s_{p,r}))\\
	 										 &\ge (f(p) -|f|_{C^{0,\a}} \zeta^\a ) \a_m s_{p,r}^m C_m'(\zeta) 
					  \end{split}
				  \end{align}
	where \( C_m'(\zeta) = 1/ \left(1 + \zeta C_m \,\frac{2^{2m}\a_m}{\a_{m-1}}\right) \).
	Observe that \( C_m'(\zeta) \uparrow 1 \) as \( \zeta \to 0 \).  
 				Now consider  \( p\in X_0^\second \) with \( \Theta_m(\mu_0,p)\ge 2b \) and let \( 0<\d_p^\second\leq \min \{\mathrm{dist}(p,V^c),\k_p,\zeta\} \) be small enough so that \( \mu_0(B(p,r))\ge b\a_m r^m \) for all \( r\in (0,\d_p^\second)\cap D_p \).  By the Portmanteau theorem, for each \( r\in (0,\d_p^\second)\cap D_p \) there exists \( N_{p,r}<\i \) such that
				 \begin{align*}
				  \F^m(X_k'(p,r))\ge b\a_m r^m
				 \end{align*} 
				 for all \( k > N_{p,r}. \)
				 For each \( r\in (0,\d_p^\second) \) let \( s_{p,r}=r \). We obtain
 				\begin{equation}
 					\label{eq:2bornot2b}
 					\F^m(X_k'(p,s_{p,r}))\ge f(p) \a_m s_{p,r}^m
 				\end{equation}
				 for all \( k > N_{p,r}. \)

			We may assume \( \zeta \) is so small that \( |f|_{C^{0,\a}} \zeta^\a/a < 1 \).   Putting   \eqref{ffm}  and \eqref{eq:2bornot2b} together we have
			\begin{align} \label{fffm2bornot2b}
				\begin{split}
				\F^m(X_k'(p,s_{p,r})) &\ge  \min\{f(p) \a_m s_{p,r}^m,\,  C_m'(\zeta) f(p)\a_m s_{p,r}^m  -  C_m'(\zeta)|f|_{C^{0,\a}} \zeta^\a \a_m s_{p,r}^m\}\\ 
				&= f(p) \a_m s_{p,r}^m \min\{1,  C_m'(\zeta)(1 - |f|_{C^{0,\a}} \zeta^\a/f(p))\}\\
				& \ge f(p) \a_m s_{p,r}^m \min\{1, C_m'(\zeta) \}\\
				& \ge f(p) \a_m s_{p,r}^m C_m'(\zeta) 				
				\end{split}
			\end{align}
			for all \( p\in X_0^\second \), \( r\in (0,\d_p^\second)\cap D_p \) and \( k>N_{p,r} \).
			
				\quad\\ \medskip  
			 \textbf{An upper bound on \( \F^m(X_0'(p,r)) \):} \\
			 	\quad\\ \medskip 
				We next compare \( \F^m(X_0'(p,r)) \) with \( f(p)\a_mr^m \). For \( p\in X_0^\second \) and \( 0<r<\d_p^\second \), it follows from \eqref{eq:lastonesurely} that
				\begin{align}
					\label{x0est}
					\F^m(X_0'(p,r)) &= \int_{X_0'(p,r)} f(q) d \sp^m(q) \\
  					&\le (f(p) + |f|_{C^{0,\a}} r^\a) \sp^m(X_0'(p,r))\notag\\
					&\le (f(p) + |f|_{C^{0,\a}} \zeta^\a) (1+\zeta)\a_mr^m.\notag
				\end{align}
  			  \quad\\ \medskip 
			 \textbf{A Vitali-type covering:}
			 \quad\\ \medskip 
				 By Corollary \ref{cor:finite}, there exists a covering \( \{B(p_i,s_{p_i,r_i})\}_{i\in I} \) of \( \sp^m \) almost all \( X_0' \) by disjoint balls \( B(p_i,s_{p_i,r_i}) \) with \( p_i \in X_0^\second \) and \( r_i \in (0,\d_{p_i}^\second)\cap D_{p_i} \).  The conditions needed for Theorem 2.8 of \cite{mattila} thus hold and we obtain a Vitali-type covering for \( \F^m \). In particular, there exists a finite covering \( \{B(p_i,s_{p_i,r_i})\}_{i=1}^N \) such that
				\begin{align}
					\label{coveringbyballs}
					\F^m(X_0'\setminus \cup_{i=1}^N X_0'(p_i,s_{p_i,r_i})) < \e.
				\end{align}
				Let \( N = \max_i \{N_{p_i,r_i}\} \). By \eqref{coveringbyballs}, \eqref{x0est}, \eqref{fffm2bornot2b},  and since \( \zeta\leq \e \),
				\begin{align*}
					\F^m(X_0') &\le \sum_i \F^m(X_0'(p_i,s_{p_i,r_i})) + \e\\
					&\le \sum_i (f(p_i) + |f|_{C^{0,\a}} \zeta^\a) (1+\zeta)\a_ms_{p_i,r_i}^m + \e\\
					&\le  (1 + |f|_{C^{0,\a}} \zeta^\a/a) (1+\zeta)\sum_if(p_i)\a_ms_{p_i,r_i}^m + \e\\
					&\le  (1 + |f|_{C^{0,\a}} \zeta^\a/a) (1+\zeta)\sum_i\F^m(X_k'(p_i,s_{p_i,r_i}))/C_m'(\zeta)  + \e\\
					&\le  (1 + |f|_{C^{0,\a}} \e^\a/a) (1+\e) \F^m(X_k')/C_m'(\e)  + \e
				\end{align*}
for all \( k \ge N \). The result follows by letting \( \e \to 0 \).
 	\end{proof}

	In particular, \( \F^m(X_0\setminus A) = \frak{m} \). 
	
	The regularity statement of our main theorem follows from the main result in \cite{almgren}: Let \( 1 \le \g < \i \) and \( 0 < \d < \i \). A locally compact surface \( S \) with finite \( \H^m \)-measure is \( (\g, \d) \)-\emph{restricted} with respect to a closed set \( A \) if \( S \subset \R^n \setminus A \) and \( \H^m(S \cap W) \le \g \H^m(\phi(S \cap W)) \) whenever \( \phi:\R^n \to \R^n \) is Lipschitz and \( W \equiv \{x \in \R^n: \phi(x) \ne x\} \) satisfies \( W \cap A = \phi(W)\cap A=\emptyset \) and \( \diam(W \cup \phi(W )) < \d \). 
	
	Now suppose \( \o:\R^+ \to \R^+ \) is nondecreasing with \( \lim_{r \downarrow 0} \o(r) = 0 \), \( 0<\d <\i \) and \( A \subseteq \R^n \) is closed. A set \( S \) is \( (M, \o, \d) \) \emph{minimal} with respect to \( A \) if \( S \) is \( (\g,\d) \)-restricted with respect to \( A \) for some \( \g > 0 \) and the following condition is satisfied:
	If \( \phi:\R^n \to \R^n \) is Lipschitz such that \( r = \diam(W \cup \phi(W)) \le \d \), then \[ \H^m((S \cap W)) \le (1+ \o(r))\H^m(\phi(S \cap W)). \]
	 
	 We will show that \( X_0\setminus A \) is \( (M,\o,\d) \) minimal with respect to \( A \), for some \( \o \) and \( \d \). Note that \( X_0\setminus A \) is \( (b/a, \d) \)-restricted with respect to \( A \) for all \( \d>0 \).
	 
	 Fix \( \d< (a/2|f|_{C^{0,\a}})^{1/\a} \) let \( \o(r)=4|f|_{C^{0,\a}} r^\a/a \). Suppose \( \phi:\R^n \to \R^n \) is Lipschitz such that \( r = \diam(W \cup \phi(W)) \le \d \).

	 Choose a point \( p \in (X_0\setminus A) \cap W \). Since \( \diam(W)\le r \), and since \( X_0 \) is \( \F^m \)-minimizing, and \( \phi \) is the identity outside of \( W \), we have 
	 \begin{align*}
		 \H^m((X_0\setminus A) \cap W) &\le \F^m((X_0\setminus A) \cap W)/(f(p) - |f|_{C^{0,\a}}r^\a)\\
		 &\le \F^m(\phi((X_0\setminus A) \cap W))/(f(p) - |f|_{C^{0,\a}}r^\a)\\
		 &= \int_{\phi((X_0\setminus A) \cap W)} f(q)d\H^m(q)/(f(p) - |f|_{C^{0,\a}}r^\a)\\
		 &\le \int_{\phi((X_0\setminus A) \cap W)} (f(p) + |f|_{C^{0,\a}} r^\a) d\H^m(q)/(f(p) - |f|_{C^{0,\a}}r^\a)\\
		 &\le (1+ 4|f|_{C^{0,\a}} r^\a/a) \H^m(\phi((X_0\setminus A) \cap W)).
	 \end{align*}
	 We conclude that \( X_0\setminus A \) satisfies the conditions for H\"older regularity in \cite{almgren} IV.(13)(6). In particular, there exists an open set \( U \subset \R^n \) such that \( \H^m((X_0 \setminus A) \setminus U) = 0 \) and \( (X_0 \setminus A) \cap U \) is a locally H\"older continuously differentiable submanifold of \( \R^n \) with exponent \( \a \).

	This completes the proof of our main theorem.  
 	
	\addcontentsline{toc}{section}{References} 
	\bibliography{bibliography.bib}{}
	\bibliographystyle{amsalpha}
 
\end{document}